\definecolor{webgreen}{rgb}{0,.5,0}
\definecolor{webbrown}{rgb}{.6,0,0}
\DeclareMathOperator{\Li}{Li}
\DeclareMathOperator{\sgn}{sgn}
\begin{document}

\theoremstyle{plain}
\newtheorem{theorem}{Theorem}
\newtheorem{corollary}[theorem]{Corollary}
\newtheorem{lemma}{Lemma}
\newtheorem{example}{Example}
\newtheorem{remark}{Remark}

\begin{center}
\vskip 1cm{\LARGE\bf 
Series and sums involving the floor function \\
}
\vskip 1.cm
{\large
Kunle Adegoke \\
Department of Physics and Engineering Physics \\ Obafemi Awolowo University, Ile-Ife\\Nigeria \\
\href{mailto:adegoke00@gmail.com}{\tt adegoke00@gmail.com}

\vskip 0.2 in

Robert Frontczak\footnote{Statements and conclusions made in this article by Robert Frontczak are entirely those of the author. They do not necessarily reflect the views of LBBW.} \\
Landesbank Baden-W\"urttemberg, Stuttgart \\ Germany \\
\href{mailto:robert.frontczak@lbbw.de}{\tt robert.frontczak@lbbw.de}

\vskip 0.2 in

Taras Goy  \\
Faculty of Mathematics and Computer Science\\
Vasyl Stefanyk Precarpathian National University, Ivano-Frankivsk \\Ukraine\\
\href{mailto:taras.goy@pnu.edu.ua}{\tt taras.goy@pnu.edu.ua}}
\end{center}

\vskip .2 in

\begin{abstract}
Let $(a_n)_{n\geq 0}$ be an arbitrary sequence and $(a_{\lfloor n/k \rfloor})_{n\geq 0}$ 
its dual floor sequence. We study infinite series and finite generalized binomial sums involving $(a_{\lfloor n/k \rfloor})_{n\geq 0}$. 
As applications we prove a range of new closed form expressions for Fibonacci (Lucas) series and binomial sum identities as particular cases.
\vskip .1 in
{\noindent\emph{Keywords:} Floor function, power series, generating function, binomial transform, Fibo\-nacci (Lucas) number, gibonacci sequence.
}
\vskip .05 in

\noindent 2020 {\it Mathematics Subject Classification}: Primary 11B39; Secondary 11B37, 40A05.
\end{abstract}

\section{Motivation}

As usual, the Fibonacci numbers $F_n$ and the Lucas numbers $L_n$ are defined, for \text{$n\in\mathbb Z$}, through the recurrence relations $F_n = F_{n-1}+F_{n-2}$, $n\ge 2$, with initial values $F_0 = 0$, $F_1 = 1$ and $L_n = L_{n-1}+L_{n-2}$ with $L_0 = 2$, $L_1 = 1$. 

For negative subscripts we have $F_{-n} = (-1)^{n-1}F_n$ and $L_{-n}=(-1)^n L_n$. They possess the explicit formulas (Binet forms)
\begin{equation}\label{binet}
F_n = \frac{\alpha^n - \beta^n }{\alpha - \beta },\quad L_n = \alpha^n + \beta^n,\quad n\in\mathbb Z.
\end{equation}
For more information we refer to Koshy \cite{Koshy} and Vajda \cite{Vajda} who have written high quality books dealing with Fibonacci and Lucas numbers. 

The paper is motivated by the following series which appeared recently as a problem proposal \cite{Frontczak}:
\begin{equation*}
\sum_{n=0}^\infty \Big (\Big\lfloor \frac{n}{2} \Big\rfloor + 1 \Big ) \frac{F_n}{2^n} = \frac{32}{5} \qquad \mbox{and} \qquad 
\sum_{n=0}^\infty \Big (\Big\lfloor \frac{n}{2} \Big\rfloor + 1 \Big ) \frac{L_n}{2^n} = 16,
\end{equation*}
where $\lfloor x \rfloor$ denotes the floor function. 

Such series allow generalizations in various directions. A gibonacci version of the above series formulas is given by
\begin{equation*}
\sum_{n=0}^\infty \Big (\Big\lfloor \frac{n}{2} \Big\rfloor + 1 \Big ) \frac{G_n}{2^n} = \frac{8(3a+4b)}{5},
\end{equation*}
where $G_n$ denotes the gibonacci number defined by the recurrence $G_n = G_{n-1} + G_{n-2}$, $n \geq 2$, 
with $G_0 = a$ and $G_1 = b$ ($a$ and $b$ are arbitrary). Note that $F_n$ corresponds to the case of $G_n$ when $a = 0$ and $b = 1$, while $L_n$ is obtained when $a = 2$ and $b = 1$.

Many properties and applications of the floor and ceiling functions have been previously investigated. 
Here we refer to the contributions \cite{Furdui,Graham,Nyblom,Palka,Podrug,Somu}. Additionally, the analysis of series (both finite and infinite) with these functions has attracted recent attention \cite{Shah-1,Shah-2}.

In this paper, we consider the following general setting. Let $(a_n)_{n\geq 0}$ be an arbitrary sequence and $(a_{\lfloor n/k \rfloor})_{n\geq 0}$ its dual floor sequence. We study infinite series and finite generalized binomial sums involving $(a_{\lfloor n/k \rfloor})_{n\geq 0}$. As applications we prove a range of new closed form expressions for Fibonacci (Lucas) series and binomial sum identities as particular cases.

\section{Some infinite series}

The next fundamental lemma will be crucial in the first part of the study. 
It will enable us to establish many results for infinite series involving the floor function.
\begin{lemma}\label{fund_lem}
Let $k\geq 1$ be an integer and $z\in\mathbb{C}$ with $|z|<1$. Let further $(a_n)_{n\geq 0}$ be an arbitrary sequence with ordinary generating function $F(z)$. Then the ordinary generating function of the sequence 
$(a_{\lfloor n/k \rfloor})_{n\geq 0}$ is 
\begin{equation}\label{gf_floor_id1}
F^+(z,k) = \sum_{n=0}^\infty a_{\lfloor n/k \rfloor} z^n = \frac{1-z^k}{1-z} F(z^k).
\end{equation}
Also, the ordinary generating function of the sequence $((-1)^n a_{\lfloor n/k \rfloor})_{n\geq 0}$ is 
\begin{equation*}
F^{-}(z,k) = \sum_{n=0}^\infty a_{\lfloor n/k \rfloor} (-1)^n z^n = \frac{1+(-1)^{k+1}z^k}{1+z}F\big((-1)^k z^k\big).
\end{equation*}
\end{lemma}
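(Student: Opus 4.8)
The plan is to prove the first identity \eqref{gf_floor_id1} directly by reindexing the defining sum, and then obtain the second identity as an immediate corollary by the substitution $z \mapsto -z$. The key observation is that the map $n \mapsto \lfloor n/k \rfloor$ is constant on each block of $k$ consecutive integers: for a fixed $m \geq 0$, we have $\lfloor n/k \rfloor = m$ precisely when $mk \le n \le mk + (k-1)$. So I would write $n = mk + r$ with $0 \le r \le k-1$ and split the sum over $n$ into a double sum over $m \ge 0$ and $r \in \{0,1,\dots,k-1\}$.

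\medskip

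\noindent\textbf{Main steps.} First I would write
\[
F^+(z,k) = \sum_{n=0}^\infty a_{\lfloor n/k \rfloor} z^n = \sum_{m=0}^\infty \sum_{r=0}^{k-1} a_{\lfloor (mk+r)/k \rfloor} z^{mk+r}.
\]
Since $\lfloor (mk+r)/k \rfloor = m$ for $0 \le r \le k-1$, the inner summand's coefficient $a_m$ does not depend on $r$, so this factors as
\[
F^+(z,k) = \sum_{m=0}^\infty a_m z^{mk} \sum_{r=0}^{k-1} z^r = \Big(\sum_{r=0}^{k-1} z^r\Big) \sum_{m=0}^\infty a_m (z^k)^m = \frac{1-z^k}{1-z}\, F(z^k),
\]
using the finite geometric sum $\sum_{r=0}^{k-1} z^r = (1-z^k)/(1-z)$ (valid since $z \ne 1$, as $|z|<1$) and the definition of $F$ as the ordinary generating function of $(a_n)$. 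For the absolute convergence needed to justify rearranging the double series, I would note that $|z^k| \le |z| < 1$ lies in the disk of convergence, so $\sum_m |a_m|\,|z|^{mk}$ converges, and the $r$-sum is finite. For the second identity, I would replace $z$ by $-z$ in \eqref{gf_floor_id1}: the left side becomes $\sum_n a_{\lfloor n/k\rfloor}(-1)^n z^n = F^-(z,k)$, and the right side becomes $\frac{1-(-z)^k}{1+z}F((-z)^k) = \frac{1+(-1)^{k+1}z^k}{1+z}F((-1)^k z^k)$, matching the claimed formula.

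\medskip

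\noindent\textbf{Expected obstacle.} This is a genuinely elementary argument and I do not anticipate a serious obstacle; the only point requiring a little care is the justification of interchanging/regrouping the terms of the series, which is handled by absolute convergence inside the unit disk as noted above. One should also double-check the sign bookkeeping in the second identity: $(-z)^k = (-1)^k z^k$ and $1-(-1)^k z^k = 1 + (-1)^{k+1} z^k$, so the stated form is correct for both parities of $k$. No deeper input (no analytic continuation, no convergence subtleties beyond the geometric series) is needed.
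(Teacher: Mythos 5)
Your proof is correct and follows essentially the same route as the paper: reindex the sum as $n=mk+r$ so that $\lfloor n/k\rfloor$ is constant on each block, factor out the finite geometric sum $\sum_{r=0}^{k-1}z^r=(1-z^k)/(1-z)$, and obtain the alternating version by the substitution $z\mapsto -z$. The only difference is that you spell out the absolute-convergence justification and the sign bookkeeping, which the paper leaves implicit.
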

\begin{proof} We have	
\begin{align*}
F^+(z;k) & =  \sum_{n=0}^\infty a_{\lfloor n/k \rfloor} z^n =  \sum_{n=0}^\infty a_n \sum_{j=0}^{k-1} z^{kn+j} \\
& =  \sum_{j=0}^{k-1} z^j \sum_{n=0}^\infty a_n z^{kn}  =  \frac{1-z^k}{1-z} F(z^k).
\end{align*}
The second identity follows from the first by replacing $z$ by $-z$.
\end{proof}
\begin{lemma}\label{example1}
For integer $k\geq 1$ and $|z|<1$ the following expressions are valid:
\begin{equation}\label{ex1_floor_id1}
\sum_{n=0}^\infty \Big\lfloor \frac{n}{k} \Big\rfloor z^n = \frac{z^k}{(1-z)(1-z^k)}
\end{equation}
and
\begin{equation}\label{ex1_floor_id2}
\sum_{n=0}^\infty (-1)^n \Big\lfloor \frac{n}{k} \Big\rfloor  z^n = \frac{(-1)^k  z^k}{(1+z)\big(1+(-1)^{k+1}z^k\big)}.
\end{equation}
\end{lemma}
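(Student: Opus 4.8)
The plan is to derive both identities as immediate specializations of Lemma~\ref{fund_lem}, taking the underlying sequence to be $a_n = n$. First I would recall the elementary power series
\[
F(z) = \sum_{n=0}^\infty n z^n = \frac{z}{(1-z)^2}, \qquad |z|<1,
\]
which serves as the ordinary generating function of $(a_n)_{n\ge 0}=(n)_{n\ge 0}$. With this choice, $a_{\lfloor n/k\rfloor} = \lfloor n/k\rfloor$, so the left-hand side of \eqref{ex1_floor_id1} is exactly $F^+(z,k)$ and the left-hand side of \eqref{ex1_floor_id2} is exactly $F^-(z,k)$.

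Next I would substitute into \eqref{gf_floor_id1}. Since $F(z^k) = z^k/(1-z^k)^2$, we get
\[
F^+(z,k) = \frac{1-z^k}{1-z}\,F(z^k) = \frac{1-z^k}{1-z}\cdot\frac{z^k}{(1-z^k)^2} = \frac{z^k}{(1-z)(1-z^k)},
\]
after cancelling one factor of $1-z^k$; this is \eqref{ex1_floor_id1}.

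For \eqref{ex1_floor_id2} there are two equivalent routes. The quickest is to observe that $\sum_{n\ge 0}(-1)^n\lfloor n/k\rfloor z^n = \sum_{n\ge 0}\lfloor n/k\rfloor(-z)^n$, so replacing $z$ by $-z$ in \eqref{ex1_floor_id1} yields $\dfrac{(-z)^k}{(1-(-z))(1-(-z)^k)}$, and rewriting $(-z)^k=(-1)^k z^k$ and $1-(-z)^k = 1+(-1)^{k+1}z^k$ gives the claimed form. Alternatively one can apply the second formula of Lemma~\ref{fund_lem} directly: with $F(w)=w/(1-w)^2$ evaluated at $w=(-1)^k z^k$ and the identity $\big(1-(-1)^k z^k\big)^2=\big(1+(-1)^{k+1}z^k\big)^2$, a single cancellation produces $\dfrac{(-1)^k z^k}{(1+z)\big(1+(-1)^{k+1}z^k\big)}$.

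I do not expect a genuine obstacle here: the result is a routine corollary of Lemma~\ref{fund_lem}. The only point deserving a word of care is convergence — for $|z|<1$ one has $|z^k|<1$, so the series $\sum n w^n$ and all the substitutions $w=z^k$, $w=(-1)^k z^k$ are legitimate, and the remaining manipulations of the closed forms are purely algebraic.
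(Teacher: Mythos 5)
Your proof is correct and follows exactly the paper's own route: apply Lemma~\ref{fund_lem} with $a_n=n$ and $F(z)=z/(1-z)^2$, cancel a factor of $1-z^k$, and obtain the alternating version by substituting $-z$ for $z$. The extra detail you supply (the explicit cancellation and the convergence remark) is fine but not a departure from the paper's argument.
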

\begin{proof}
Use Lemma \ref{fund_lem} with $a_n=n$ in conjunction with generating function
\begin{equation*}
F(z) = \sum_{n=0}^\infty n z^n = \frac{z}{(1-z)^2}.
\end{equation*}
The second identity follows from the first by replacing $z$ with $-z$.
\end{proof}
\begin{remark}
Since for $k<0$, $\left\lfloor\frac{n}{k}\right\rfloor = - 1 - \left\lfloor\frac{n-1}{-k}\right\rfloor$,
we can extend Formulas \eqref{ex1_floor_id1} and \eqref{ex1_floor_id2} to negative integer $k$ as follows:
\begin{gather*}
\sum_{n=0}^\infty \Big\lfloor \frac{n+1}{k} \Big\rfloor z^n = \frac{z^k}{(1-z)(1-z^k)},\\
\sum_{n=0}^\infty (-1)^n\Big\lfloor \frac{n+1}{k}  \Big\rfloor  z^n = \frac{(-1)^k  z^k}{(1+z)\big(1+(-1)^{k+1}z^k\big)}.
\end{gather*}
\end{remark}
\begin{remark} Since  
$\left\lfloor\frac{n}{k}\right\rfloor = \left\lceil\frac{n+1}{k}\right\rceil-1$ for $k\geq1$ and 
$\left\lfloor\frac{n+1}{k}\right\rfloor = \left\lceil\frac{n}{k}\right\rceil-1$ for $k\leq-1$, 
from \eqref{ex1_floor_id1} and \eqref{ex1_floor_id2} we have infinite series involving the ceiling function:
\begin{gather*}
\sum_{n=0}^\infty \Big\lceil \frac{n+1}{k} \Big\rceil z^n = \frac{1}{(1-z)(1-z^k)},\qquad k\geq 1,\\
\sum_{n=0}^\infty \Big\lceil \frac{n}{k} \Big\rceil  z^n = \frac{1}{(1-z)(1-z^k)}, \qquad k\leq -1, 
\end{gather*}
and
\begin{equation*}\label{ex1_ceil_id2_1}
\sum_{n=0}^\infty (-1)^n \Big\lceil \frac{n+1}{k} \Big\rceil z^n = \frac{1}{(1+z)\big(1-(-1)^{k}z^k\big)}, \qquad k\geq 1,
\end{equation*}
\begin{equation*}\label{ex1_ceil_id2_2}
\sum_{n=0}^\infty (-1)^n \Big\lceil \frac{n}{k} \Big\rceil z^n = \frac{1}{(1+z)\big(1-(-1)^{k}z^k\big)}, \qquad k\leq -1.
\end{equation*}
\end{remark}

Lemma \ref{example1} leads immediately to a range of new Fibonacci (Lucas) series evaluations. 
\begin{theorem}\label{thm_1}
For any integers $m$ and $s$, an integer $k\geq 1$ and any  $p>\alpha^s$, we have
\begin{equation*}
\sum_{n=0}^\infty \Big\lfloor \frac{n}{k} \Big\rfloor \frac{F_{sn+m}}{p^{n+1}} = 
\frac{p^k \big(p F_{sk+m}-(-1)^s F_{s(k-1)+m}\big) - (-1)^{sk} \big(pF_{m} -(-1)^s F_{m-s}\big)}
{\big(p^2-pL_s+(-1)^s\big)\big(p^{2k} - p^kL_{sk} + (-1)^{sk}\big)}
\end{equation*}
and
\begin{equation*}
\sum_{n=0}^\infty \Big\lfloor \frac{n}{k} \Big\rfloor \frac{L_{sn+m}}{p^{n+1}} = 
\frac{p^k \big(p L_{sk+m}-(-1)^s L_{s(k-1)+m}\big) - (-1)^{sk} (pL_{m} -(-1)^sL_{m-s})}
{\big(p^2-pL_s+(-1)^s\big)\big(p^{2k} - p^kL_{sk} + (-1)^{sk}\big)}.
\end{equation*}
\end{theorem}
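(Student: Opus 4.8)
The plan is to apply Lemma~\ref{example1} directly with the substitution $z = 1/p$, exploiting the Binet forms~\eqref{binet} to split each Fibonacci (or Lucas) series into two geometric-type pieces. First I would use the identity $F_{sn+m} = (\alpha^{sn+m} - \beta^{sn+m})/(\alpha-\beta)$ to write
\[
\sum_{n=0}^\infty \Big\lfloor \frac{n}{k} \Big\rfloor \frac{F_{sn+m}}{p^{n+1}}
= \frac{1}{p(\alpha-\beta)}\left( \alpha^m \sum_{n=0}^\infty \Big\lfloor \frac{n}{k} \Big\rfloor \Big(\frac{\alpha^s}{p}\Big)^n - \beta^m \sum_{n=0}^\infty \Big\lfloor \frac{n}{k} \Big\rfloor \Big(\frac{\beta^s}{p}\Big)^n \right),
\]
and similarly for the Lucas series with $L_{sn+m} = \alpha^{sn+m} + \beta^{sn+m}$, which replaces the minus sign by a plus and drops the $1/(\alpha-\beta)$ factor. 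The convergence condition $p > \alpha^s$ guarantees $|\alpha^s/p| < 1$ and $|\beta^s/p| < 1$ (since $|\beta| < 1 < \alpha$ when $s \geq 0$, and one checks the case $s<0$ using $|\alpha\beta|=1$), so Lemma~\ref{example1} applies to each inner sum with $z = \alpha^s/p$ and $z = \beta^s/p$ respectively.

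Next I would substitute the closed form~\eqref{ex1_floor_id1}, giving for the $\alpha$-piece
\[
\sum_{n=0}^\infty \Big\lfloor \frac{n}{k} \Big\rfloor \Big(\frac{\alpha^s}{p}\Big)^n = \frac{(\alpha^s/p)^k}{(1-\alpha^s/p)(1-\alpha^{sk}/p^k)} = \frac{p \,\alpha^{sk}}{(p-\alpha^s)(p^k-\alpha^{sk})},
\]
and the analogous expression with $\alpha$ replaced by $\beta$. Then the whole sum becomes
\[
\frac{1}{\alpha-\beta}\left( \frac{\alpha^{sk+m}}{(p-\alpha^s)(p^k-\alpha^{sk})} - \frac{\beta^{sk+m}}{(p-\beta^s)(p^k-\beta^{sk})} \right)
\]
(the factor $p$ from the numerator cancels the $1/p$ out front). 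The remaining task is purely algebraic: combine the two fractions over the common denominator $(p-\alpha^s)(p-\beta^s)(p^k-\alpha^{sk})(p^k-\beta^{sk})$, and simplify using $\alpha\beta = -1$, $\alpha+\beta = 1$, $\alpha^s + \beta^s = L_s$, $\alpha^s\beta^s = (-1)^s$, and likewise $\alpha^{sk}+\beta^{sk}=L_{sk}$, $\alpha^{sk}\beta^{sk}=(-1)^{sk}$. This turns the two quadratic denominators into $p^2 - pL_s + (-1)^s$ and $p^{2k} - p^k L_{sk} + (-1)^{sk}$, matching the claimed denominator.

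The main obstacle is the numerator bookkeeping: after clearing denominators one gets a sum of four terms of the form $\pm\alpha^{\bullet}\beta^{\bullet}$ which must be regrouped into the form $p^k(pF_{sk+m} - (-1)^s F_{s(k-1)+m}) - (-1)^{sk}(pF_m - (-1)^s F_{m-s})$. The key simplifications are $\alpha^{sk+m}(p-\beta^s)(p^k-\beta^{sk}) - \beta^{sk+m}(p-\alpha^s)(p^k-\alpha^{sk})$, expanded as a polynomial in $p$; the coefficient of $p^{k+1}$ is $\alpha^{sk+m}-\beta^{sk+m} = (\alpha-\beta)F_{sk+m}$, the coefficient of $p^k$ is $-(\alpha^{sk+m}\beta^s - \beta^{sk+m}\alpha^s) = -(\alpha\beta)^s(\alpha^{s(k-1)+m}-\beta^{s(k-1)+m}) = -(-1)^s(\alpha-\beta)F_{s(k-1)+m}$, and the two $p$-lower terms collapse similarly using $(\alpha\beta)^{sk} = (-1)^{sk}$ and $F_{m-s}$. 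Dividing by $\alpha-\beta$ yields the Fibonacci statement; the Lucas version is identical except one starts from $\alpha^m + \beta^m$, so the ``$\alpha-\beta$'' differences become sums $\alpha^\bullet + \beta^\bullet$, i.e.\ Lucas numbers, and there is no overall division by $\alpha-\beta$. No genuinely hard step arises; the care needed is entirely in not dropping signs when $s$ or $k$ is negative or odd.
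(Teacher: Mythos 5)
Your proposal is correct and follows exactly the paper's own route: substitute $z=\alpha^s/p$ and $z=\beta^s/p$ into \eqref{ex1_floor_id1}, then combine the two resulting identities via the Binet forms using $\alpha\beta=-1$, $\alpha^s\beta^s=(-1)^s$ and $\alpha^s+\beta^s=L_s$. Your expansion of the numerator as a polynomial in $p$ and the identification of its coefficients with $F_{sk+m}$, $F_{s(k-1)+m}$, $F_m$, $F_{m-s}$ is precisely the omitted "combine the results" step, carried out correctly.
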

\begin{proof} 
Inserting $z=\alpha^s/p$ in \eqref{ex1_floor_id1} gives
\begin{equation*}
\sum_{n=0}^\infty \Big\lfloor \frac{n}{k} \Big\rfloor \frac{\alpha^{sn}}{p^{n}} = \frac{p \alpha^{sk}}{(p-\alpha^s)(p^{k}-\alpha^{sk})}\quad {\text{or}}\quad \sum_{n=0}^\infty \Big\lfloor \frac{n}{k} \Big\rfloor \frac{\alpha^{sn+m}}{p^{n}} = \frac{p \alpha^{sk+m}}{(p-\alpha^s)(p^{k}-\alpha^{sk})}.
\end{equation*}
Similarly,
\begin{equation*}
\sum_{n=0}^\infty \Big\lfloor \frac{n}{k} \Big\rfloor \frac{\beta^{sn+m}}{p^{n}} = \frac{p \beta^{sk+m}}{(p-\beta^s)(p^k-\beta^{sk})}.
\end{equation*}
Now, we combine the results according to the Binet forms \eqref{binet} making use of $\alpha\beta=-1$ and $\alpha+\beta=1$.
\end{proof}
\begin{example} 
Taking particular values for the parameters $s$ and $p$ in Theorem \ref{thm_1} leads to following series valid for $k\geq 1$:
\begin{equation*} 
\sum_{n=0}^\infty \Big\lfloor \frac{n}{k} \Big\rfloor \frac{F_{n+m}}{2^{n+1}} = \frac{2^k F_{k+m+2} - (-1)^k F_{m+2}}{4^k - 2^k L_k + (-1)^k},\qquad 
\sum_{n=0}^\infty \Big\lfloor \frac{n}{k} \Big\rfloor \frac{L_{n+m}}{2^{n+1}} = \frac{2^k L_{k+m+2} - (-1)^k L_{m+2}}{4^k - 2^k L_k + (-1)^k},
\end{equation*}
\begin{equation*}
\sum_{n=0}^\infty \Big\lfloor \frac{n}{k} \Big\rfloor \frac{F_{n+m}}{3^{n+1}} = \frac{3^k L_{k+m+1} - (-1)^k L_{m+1}}{5(9^k - 3^kL_k + (-1)^k)},\qquad 
\sum_{n=0}^\infty \Big\lfloor \frac{n}{k} \Big\rfloor \frac{L_{n+m}}{3^{n+1}} = \frac{3^k F_{k+m+1} - (-1)^k F_{m+1}}{9^k - 3^kL_k  + (-1)^k},
\end{equation*}
\begin{equation*}
\sum_{n=0}^\infty \Big\lfloor \frac{n}{k} \Big\rfloor \frac{F_{2n+m}}{4^{n+1}} = \frac{4^k L_{2k+m+1} - L_{m+1}}{5(16^{k} - 4^k L_{2k} + 1)},\qquad
\sum_{n=0}^\infty \Big\lfloor \frac{n}{k} \Big\rfloor \frac{L_{2n+m}}{4^{n+1}} = \frac{4^k F_{2k+m+1} - F_{m+1}}{16^{k} - 4^k L_{2k} + 1},
\end{equation*}
\begin{equation*}
\sum_{n=0}^\infty \Big\lfloor \frac{n}{k} \Big\rfloor \frac{F_{2sn+m}}{L_{2s}^{n+1}} = \frac{L_{2s}^k F_{2s(k+1)+m} - F_{m+2s}}{L_{2s}^{2k} 
- L_{2s}^k L_{2sk} + 1}, \qquad
\sum_{n=0}^\infty \Big\lfloor \frac{n}{k} \Big\rfloor \frac{L_{2sn+m}}{L_{2s}^{n+1}} = \frac{L_{2s}^k L_{2s(k+1)+m} - L_{m+2s}}{L_{2s}^{2k} 
- L_{2s}^k L_{2sk} + 1},
\end{equation*}
\begin{equation*}
\sum_{n=0}^\infty \Big\lfloor \frac{n}{k} \Big\rfloor \frac{F_{2sn+m}}{F^{n+1}_p} 
= \frac{F^k_p\big(F_{p}F_{2sk+m}-F_{2s(k-1)+m}\big) - F_pF_m + F_{m-2s}}{\big(F_p^2 - F_p L_{2s}+1\big)\big(F_p^{2k} - F_p^k L_{2sk} + 1 \big)}, \quad F_p>\alpha^s. 
\end{equation*}
\end{example}

The alternating version of Theorem \ref{thm_1} is proved in exactly the same manner, the proofs are therefore omitted. 
\begin{theorem}\label{thm_2}
For any integers $m$ and $s$, integer $k\geq 1$ and any $p>\alpha^s$, we have
\begin{equation*}\label{main_Fib2}
\sum_{n=0}^\infty (-1)^{n-k} \Big\lfloor \frac{n}{k} \Big\rfloor \frac{F_{sn+m}}{p^{n+1}} 
= \frac{p^k\big(p F_{sk+m}+(-1)^sF_{s(k-1)+m}\big)-(-1)^{k(s-1)}\big(pF_{m}+(-1)^sF_{m-s}\big)}{\big(p^2+pL_s+(-1)^s\big)\big(p^{2k} -(-1)^k p^k L_{sk} + (-1)^{sk}\big)}
\end{equation*}
and
\begin{equation*}\label{main_Luc2}
\sum_{n=0}^\infty (-1)^{n-k} \Big\lfloor \frac{n}{k} \Big\rfloor \frac{L_{sn+m}}{p^{n+1}} 
= \frac{p^k\big( p L_{sk+m}+(-1)^sL_{s(k-1)+m}\big)-(-1)^{k(s-1)}\big(pL_{m}+(-1)^sL_{m-s}\big)}{\big(p^2+pL_s+(-1)^s\big)\big(p^{2k} -(-1)^k p^kL_{sk} + (-1)^{sk}\big)}.
\end{equation*}
\end{theorem}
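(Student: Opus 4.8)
The plan is to follow the proof of Theorem~\ref{thm_1} verbatim, replacing \eqref{ex1_floor_id1} by the alternating identity \eqref{ex1_floor_id2}. First I would absorb the sign: since $(-1)^{n-k}=(-1)^{k}(-1)^{n}$, the left-hand sums equal $(-1)^{k}$ times $\sum_{n\ge 0}(-1)^{n}\lfloor n/k\rfloor F_{sn+m}/p^{n+1}$ (respectively with $L_{sn+m}$), so it is enough to handle the plain alternating sum and multiply through by $(-1)^{k}$ at the end.

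Next I would specialize \eqref{ex1_floor_id2} at $z=\alpha^{s}/p$ and at $z=\beta^{s}/p$; the hypothesis $p>\alpha^{s}$ guarantees that both arguments have modulus $<1$. Clearing the fractions $z=\alpha^{s}/p$, $z^{k}=\alpha^{sk}/p^{k}$ and using $(-1)^{k+1}=-(-1)^{k}$ gives
\[
\sum_{n=0}^{\infty}(-1)^{n}\Big\lfloor\frac{n}{k}\Big\rfloor\frac{\alpha^{sn}}{p^{n}}
=\frac{(-1)^{k}\,p\,\alpha^{sk}}{(p+\alpha^{s})\bigl(p^{k}-(-1)^{k}\alpha^{sk}\bigr)},
\]
and after multiplying both sides by $(-1)^{k}\alpha^{m}/p$,
\[
(-1)^{k}\sum_{n=0}^{\infty}(-1)^{n}\Big\lfloor\frac{n}{k}\Big\rfloor\frac{\alpha^{sn+m}}{p^{n+1}}
=\frac{\alpha^{sk+m}}{(p+\alpha^{s})\bigl(p^{k}-(-1)^{k}\alpha^{sk}\bigr)},
\]
together with the same identity with $\alpha$ replaced by $\beta$.

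Then I would take the Fibonacci combination (subtract the $\beta$-identity from the $\alpha$-identity and divide by $\alpha-\beta$) and the Lucas combination (add them), placing everything over the common denominator $(p+\alpha^{s})(p+\beta^{s})\bigl(p^{k}-(-1)^{k}\alpha^{sk}\bigr)\bigl(p^{k}-(-1)^{k}\beta^{sk}\bigr)$. Expanding with $\alpha\beta=-1$, $\alpha+\beta=1$, $\alpha^{s}+\beta^{s}=L_{s}$, $\alpha^{sk}+\beta^{sk}=L_{sk}$, $\alpha^{s}\beta^{s}=(-1)^{s}$, $\alpha^{sk}\beta^{sk}=(-1)^{sk}$ collapses the denominator to $\bigl(p^{2}+pL_{s}+(-1)^{s}\bigr)\bigl(p^{2k}-(-1)^{k}p^{k}L_{sk}+(-1)^{sk}\bigr)$. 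For the numerator one expands $\alpha^{sk+m}(p+\beta^{s})\bigl(p^{k}-(-1)^{k}\beta^{sk}\bigr)$ into four terms and reduces each mixed product via $\alpha\beta=-1$: $\alpha^{sk+m}\beta^{sk}=(-1)^{sk}\alpha^{m}$, $\alpha^{sk+m}\beta^{s}=(-1)^{s}\alpha^{s(k-1)+m}$, $\alpha^{sk+m}\beta^{s(k+1)}=(-1)^{s(k+1)}\alpha^{m-s}$; subtracting the $\beta$-branch and dividing by $\alpha-\beta$ converts the four $\alpha$-powers into $F_{sk+m}$, $F_{m}$, $F_{s(k-1)+m}$, $F_{m-s}$ (and into the corresponding $L$'s in the Lucas case) by \eqref{binet}.

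The only genuine work — and the step most likely to cause a slip — is the sign bookkeeping at the end: one must check $(-1)^{k+sk}=(-1)^{k(s+1)}=(-1)^{k(s-1)}$ and $(-1)^{k+s(k+1)}=(-1)^{k(s-1)+s}$, which follow from $(-1)^{2j}=1$, so that the coefficients of $F_{m}$ and $F_{m-s}$ in the collected numerator are exactly $-(-1)^{k(s-1)}p$ and $-(-1)^{k(s-1)+s}$; the numerator then factors as $p^{k}\bigl(pF_{sk+m}+(-1)^{s}F_{s(k-1)+m}\bigr)-(-1)^{k(s-1)}\bigl(pF_{m}+(-1)^{s}F_{m-s}\bigr)$, and identically with $F\mapsto L$, which is the claim.
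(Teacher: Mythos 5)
Your proposal is correct and is precisely the argument the paper intends: the authors omit this proof, stating it is ``proved in exactly the same manner'' as Theorem~\ref{thm_1}, and you carry out that same substitution $z=\alpha^s/p$, $z=\beta^s/p$ in \eqref{ex1_floor_id2} followed by the Binet-form combination, with the sign bookkeeping $(-1)^{k(s+1)}=(-1)^{k(s-1)}$ handled correctly.
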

\begin{example} 
Taking particular values for $s$ and $p$ in Theorem \ref{thm_2} leads to the following special series valid for $k\geq 1$:
\begin{gather*}
\sum_{n=0}^\infty (-1)^{n-k}\Big\lfloor \frac{n}{k} \Big\rfloor  \frac{F_{n+m}}{2^{n+1}} 
= \frac{2^k L_{k+m-1} - L_{m-1}}{5(4^k - (-2)^{k} L_k + (-1)^k)},\\
\sum_{n=0}^\infty(-1)^{n-k} \Big\lfloor \frac{n}{k} \Big\rfloor \frac{L_{n+m}}{2^{n+1}} 
= \frac{2^k F_{k+m-1} - F_{m-1}}{4^k - (-2)^{k+1} L_k + (-1)^k},\\
\sum_{n=0}^\infty(-1)^{n} \Big\lfloor \frac{n}{k} \Big\rfloor  \frac{F_{2n+m}}{3^{n+1}} 
= \frac{(-3)^k (3F_{2k+m} + F_{2(k-1)+m})-3F_m-F_{m-2}}{19 (9^k-(-3)^kL_{2k}+1)},\\
\sum_{n=0}^\infty(-1)^{n} \Big\lfloor \frac{n}{k} \Big\rfloor  \frac{L_{2n+s}}{3^{n+1}} 
= \frac{(-3)^k(3L_{2k+s} + L_{2(k-1)+s})-3L_s-L_{s-2}}{19(9^k-(-3)^kL_{2k}+1)},\\
\sum_{n=0}^\infty (-1)^{n-k} \Big\lfloor \frac{n}{k} \Big\rfloor \frac{F_{2sn+m}}{L_{2s}^{n+1}} 
= \frac{L_{2s}^{k}( F_{2sk+m}L_{2s} + F_{2s(k-1)+m}) - (-1)^k(L_{2s}F_{m}+F_{m-2s})}
{(2L_{2s}^2 + 1)\big(L_{2s}^{2k} - (-1)^{k} L_{2s}^k L_{2sk} + 1\big)},\\
\sum_{n=0}^\infty (-1)^{n-k} \Big\lfloor \frac{n}{k} \Big\rfloor \frac{L_{2sn+m}}{L_{2s}^{n+1}} 
= \frac{L_{2s}^{k}( L_{2sk+m}L_{2s} + L_{2s(k-1)+m}) - (-1)^k(L_{2s}L_{m}+L_{m-2s})}
{(2L_{2s}^2 + 1)\big(L_{2s}^{2k} - (-1)^{k} L_{2s}^k L_{2sk} + 1\big)}.
\end{gather*}
\end{example}

Let $f^{+}(z,k)$ and $f^{-}(z,k)$ denote the functions in \eqref{ex1_floor_id1} and \eqref{ex1_floor_id2}, respectively, i.e.,
\begin{gather*}
f^{+}(z,k)=\sum_{n=0}^\infty \Big\lfloor \frac{n}{k} \Big\rfloor z^n = \frac{z^k}{(1-z)(1-z^k)},\\
f^{-}(z,k)=\sum_{n=0}^\infty (-1)^n \Big\lfloor \frac{n}{k} \Big\rfloor z^n = \frac{(-1)^k  z^k}{(1+z)(1+(-1)^{k+1}z^k)}.
\end{gather*}
\begin{lemma}
If $|z|<1$, then
\begin{gather}\label{eq.vjyhfag}
\sum_{n=1}^\infty \left\lfloor \frac{n}{2} \right\rfloor \frac{z^n}{n} = \frac{1}{4} \log \left| \frac{1 - z}{1 + z} \right| + \frac{z}{2(1-z)},\\
\label{eq.vtkra2n}
\sum_{n=1}^\infty (-1)^{n-1} \left\lfloor \frac{n}{2} \right\rfloor \frac{z^n}{n} = \frac{1}{4}\log \left| \frac{1 - z}{1 + z} \right| 
+ \frac{z}{2(1+z)}.
\end{gather}
\end{lemma}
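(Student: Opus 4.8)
\emph{Proof proposal.} The plan is to reduce both series to standard power series by writing $\lfloor n/2\rfloor$ in closed form. Since $\lfloor n/2\rfloor = n/2$ for even $n$ and $(n-1)/2$ for odd $n$, one has the uniform identity $\lfloor n/2\rfloor = \frac{n}{2} - \frac{1-(-1)^n}{4}$, hence $\frac{\lfloor n/2\rfloor}{n} = \frac12 - \frac{1-(-1)^n}{4n}$ for $n\geq 1$. Substituting this into the left-hand side of \eqref{eq.vjyhfag} and splitting the series termwise (legitimate because all three resulting series converge absolutely for $|z|<1$) gives
\[
\sum_{n=1}^\infty \left\lfloor \frac{n}{2}\right\rfloor \frac{z^n}{n} = \frac12 \sum_{n=1}^\infty z^n - \frac14 \sum_{n=1}^\infty \frac{z^n}{n} + \frac14 \sum_{n=1}^\infty \frac{(-1)^n z^n}{n}.
\]

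Next I would evaluate the three elementary sums, $\sum_{n\ge 1} z^n = z/(1-z)$, $\sum_{n\ge 1} z^n/n = -\log(1-z)$ and $\sum_{n\ge 1}(-1)^n z^n/n = -\log(1+z)$, all valid for $|z|<1$. Combining them, the right-hand side becomes $\frac{z}{2(1-z)} + \frac14\bigl(\log(1-z)-\log(1+z)\bigr) = \frac{z}{2(1-z)} + \frac14\log\left|\frac{1-z}{1+z}\right|$, which is \eqref{eq.vjyhfag}. (For complex $z$ with $|z|<1$ the logarithms are the principal branches; the modulus notation merely records the real case, and since $\tfrac{1-z}{1+z}$ has positive real part throughout $|z|<1$ there is no branch ambiguity.)

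For \eqref{eq.vtkra2n} the cleanest route is to replace $z$ by $-z$ in \eqref{eq.vjyhfag}, which is permitted since $|-z|=|z|<1$: the left-hand side turns into $\sum_{n\ge 1}(-1)^n\lfloor n/2\rfloor z^n/n$, while on the right $\frac{z}{2(1-z)}\mapsto -\frac{z}{2(1+z)}$ and $\log\left|\frac{1-z}{1+z}\right|\mapsto \log\left|\frac{1+z}{1-z}\right| = -\log\left|\frac{1-z}{1+z}\right|$. Multiplying the resulting identity through by $-1$ (and using $(-1)(-1)^n=(-1)^{n-1}$) yields exactly \eqref{eq.vtkra2n}.

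An alternative that sidesteps the explicit formula for $\lfloor n/2\rfloor$ is to integrate the generating function from Lemma~\ref{example1}: since $\sum_{n\ge 1}\lfloor n/2\rfloor z^n/n = \int_0^z f^{+}(t,2)\,dt/t = \int_0^z \frac{t\,dt}{(1-t)^2(1+t)}$, one expands by partial fractions as $\frac{t}{(1-t)^2(1+t)} = \frac{-1/4}{1-t} + \frac{1/2}{(1-t)^2} + \frac{-1/4}{1+t}$ and integrates term by term, recovering the same right-hand side. I do not anticipate a genuine obstacle: the only points that need care are the justification of termwise summation or integration (immediate from absolute convergence, uniform on compact subsets of the unit disk) and the bookkeeping relating the principal logarithm to the modulus that appears in the statement.
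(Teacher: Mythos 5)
Your argument is correct, and your primary route differs from the paper's. The paper proves \eqref{eq.vjyhfag} by integrating $f^{+}(z,2)/z=\frac{z}{(1-z)^2(1+z)}$ term by term and using $f^{+}(0,2)=0$ (and handles \eqref{eq.vtkra2n} via $f^{-}(z,2)$), i.e., exactly the ``alternative'' you sketch at the end, with the same partial-fraction decomposition $\frac{t}{(1-t)^2(1+t)}=\frac{-1/4}{1-t}+\frac{1/2}{(1-t)^2}+\frac{-1/4}{1+t}$. Your main proof instead uses the closed form $\lfloor n/2\rfloor=\frac{n}{2}-\frac{1-(-1)^n}{4}$ to split the series into $\frac12\sum z^n-\frac14\sum z^n/n+\frac14\sum(-1)^nz^n/n$, which reduces everything to the geometric and logarithmic series with no integration and no appeal to Lemma~\ref{example1}; this is more elementary and self-contained, at the cost of not generalizing as readily to other denominators $k$ (the paper's generating-function machinery extends uniformly to $\lfloor n/k\rfloor$, as in \eqref{eq.djwcsym}). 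Your derivation of \eqref{eq.vtkra2n} by the substitution $z\mapsto -z$ followed by multiplication by $-1$ is clean and matches the spirit of how the paper passes between the $f^{+}$ and $f^{-}$ identities. One tiny caveat: your parenthetical justification of the modulus sign via the principal branch is fine, but for real $z\in(-1,1)$ one has $\frac{1-z}{1+z}>0$ anyway, so $\log\left|\frac{1-z}{1+z}\right|=\log\frac{1-z}{1+z}$ and nothing further is needed.
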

\begin{proof}
To get the first identity integrate $f^{+}(z,2)/z$ and use $f^{+}(0,2) = 0$. The second follows from working with $f^{-}(z,2)$.
\end{proof}
\begin{example}
Setting $z=\sqrt 5/5$, $z=\sqrt 5/3$, $z=2/\sqrt 5$ and $z=3\sqrt 5/7$ in turn in \eqref{eq.vjyhfag}, \eqref{eq.vtkra2n} and adding we have
\begin{gather*}
\sum_{n=1}^\infty \frac{1}{5^n} \frac{n}{2n + 1} = \frac{5}{8} - \frac{\sqrt5}{2}\log \alpha, 
\qquad
\sum_{n=1}^\infty \left(\frac{5}{9}\right)^n\frac{n}{2n + 1} = \frac{9}{8} - \frac{3\sqrt{5}}{5}\log \alpha,\\
\sum_{n=1}^\infty \left(\frac{4}{5}\right)^n\frac{n}{2n + 1} = \frac{5}{2} - \frac{3\sqrt{5}}{4}\log \alpha,
\qquad
\sum_{n=1}^\infty \left(\frac{45}{49}\right)^n\frac{n}{2n + 1} = \frac{49}{8} - \frac{14\sqrt{5}}{15}\log \alpha.
\end{gather*}
\end{example}
\begin{remark} 
One can obtain all of the above formulas from \cite[Formula 5.2.4.]{Prud}: 
\begin{equation*}
\sum_{k=0}^\infty \frac{x^k}{2k+m} = \frac{1}{2x^{m/2}} \left( -\log (1-\sqrt{x}) + (-1)^{m+1} \log (1+\sqrt{x}) \right).
\end{equation*}
For instance, to get the first formula, insert $x=1/5$ and $m=1$ to get
\begin{equation*}
\sum_{k=0}^\infty \frac{\left(\frac{1}{5}\right)^k}{2k+1} = \frac{\sqrt5}{2}\log \left( \frac{\sqrt5+1}{\sqrt5-1}\right ) \quad\text{or}\quad 
\sum_{k=0}^\infty \frac{1}{(2k+1) 5^k} = {\sqrt 5} \log \alpha.
\end{equation*}
The formula follows from $\frac{2n}{2n+1}=1-\frac{1}{2n+1}$ and $\sum\limits_{k=1}^{\infty} \frac{1}{5^k}=\frac{1}{4}$.
\end{remark}
\begin{theorem}
If $m$ is any integer, then 
\begin{equation}\label{eq.vpc51jb}
{\sum_{n = 1}^\infty} \left\lfloor {\frac{n}{2}} \right\rfloor \frac{L_{n + m}}{2^n n} = - \frac{3\sqrt 5}{4}F_m  \log \alpha  
- \frac{L_m}{8} \log 5 + \frac{L_{m + 3}}{2}
\end{equation}
and
\begin{equation}\label{eq.njnootu}
{\sum_{n = 1}^\infty} \left\lfloor {\frac{n}{2}} \right\rfloor \frac{F_{n + m}}{2^n n} = - \frac{3}{4\sqrt 5 } L_m\log \alpha  
- \frac{F_m}{8} \log 5 + \frac{F_{m + 3}}{2}.
\end{equation}
\end{theorem}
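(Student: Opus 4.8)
The strategy is to evaluate both series by substituting the admissible values $z=\alpha/2$ and $z=\beta/2$ (note $|\alpha/2|<1$ and $|\beta/2|<1$) into the closed form \eqref{eq.vjyhfag} of the preceding lemma and then recombining via the Binet representations \eqref{binet}. Writing $L_{n+m}/2^{n}=\alpha^{m}(\alpha/2)^{n}+\beta^{m}(\beta/2)^{n}$ we get
\[
\sum_{n=1}^{\infty}\Big\lfloor\frac{n}{2}\Big\rfloor\frac{L_{n+m}}{2^{n}n}
=\alpha^{m}\sum_{n=1}^{\infty}\Big\lfloor\frac{n}{2}\Big\rfloor\frac{(\alpha/2)^{n}}{n}
+\beta^{m}\sum_{n=1}^{\infty}\Big\lfloor\frac{n}{2}\Big\rfloor\frac{(\beta/2)^{n}}{n},
\]
and \eqref{eq.njnootu} follows identically from $F_{n+m}/2^{n}=\big(\alpha^{m}(\alpha/2)^{n}-\beta^{m}(\beta/2)^{n}\big)/\sqrt5$. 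Each inner sum is already in closed form by \eqref{eq.vjyhfag}, so the whole proof reduces to algebraic simplification.

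The simplification rests on the elementary relations $2-\alpha=1+\beta=\beta^{2}$, $2-\beta=1+\alpha=\alpha^{2}$, $2+\alpha=\sqrt5\,\alpha$, $2+\beta=-\sqrt5\,\beta$ and $\beta=-\alpha^{-1}$ (hence $\log|\beta|=-\log\alpha$), all immediate from $\alpha+\beta=1$, $\alpha\beta=-1$ and $\alpha-\beta=\sqrt5$. They give
\[
\frac{1-\alpha/2}{1+\alpha/2}=\frac{2-\alpha}{2+\alpha}=\frac{\beta^{2}}{\sqrt5\,\alpha},
\qquad
\frac{1-\beta/2}{1+\beta/2}=\frac{2-\beta}{2+\beta}=\frac{\alpha^{2}}{-\sqrt5\,\beta},
\]
and, since $\beta^{2}>0$ and $-\sqrt5\,\beta>0$, the absolute values in \eqref{eq.vjyhfag} are harmless, so
\[
\tfrac14\log\Big|\tfrac{1-\alpha/2}{1+\alpha/2}\Big|=-\tfrac34\log\alpha-\tfrac18\log5,
\qquad
\tfrac14\log\Big|\tfrac{1-\beta/2}{1+\beta/2}\Big|=\tfrac34\log\alpha-\tfrac18\log5.
\]
For the rational parts, $\dfrac{\alpha/2}{2(1-\alpha/2)}=\dfrac{\alpha}{2(2-\alpha)}=\dfrac{\alpha}{2\beta^{2}}=\dfrac{\alpha^{3}}{2}$ (using $\beta^{2}=\alpha^{-2}$), and likewise $\dfrac{\beta/2}{2(1-\beta/2)}=\dfrac{\beta^{3}}{2}$.

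Putting the pieces together, the left side of \eqref{eq.vpc51jb} becomes $\alpha^{m}\big(-\tfrac34\log\alpha-\tfrac18\log5+\tfrac{\alpha^{3}}{2}\big)+\beta^{m}\big(\tfrac34\log\alpha-\tfrac18\log5+\tfrac{\beta^{3}}{2}\big)$; collecting like terms and using $\alpha^{m}-\beta^{m}=\sqrt5\,F_{m}$, $\alpha^{m}+\beta^{m}=L_{m}$ and $\alpha^{m+3}+\beta^{m+3}=L_{m+3}$ yields exactly \eqref{eq.vpc51jb}. Dividing the analogous combination by $\sqrt5$ and using instead $\alpha^{m+3}-\beta^{m+3}=\sqrt5\,F_{m+3}$ gives \eqref{eq.njnootu}. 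I do not expect a genuine obstacle beyond bookkeeping: the only points needing a word of care are the convergence of the two series at $z=\alpha/2,\beta/2$ (trivial, since $\lfloor n/2\rfloor/n$ is bounded and both ratios have modulus $<1$) and correctly tracking the sign of $\beta$ inside the logarithm for the $z=\beta/2$ branch.
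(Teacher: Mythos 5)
Your proof is correct and follows exactly the paper's route: substitute $z=\alpha/2$ and $z=\beta/2$ into \eqref{eq.vjyhfag} and recombine via the Binet forms, with the paper recording the intermediate results as \eqref{eq.kdzdqb7}--\eqref{eq.fv2fby8} (note $\alpha^{m+2}-\alpha^m/2=\alpha^m(\alpha+\tfrac12)=\alpha^{m+3}/2$, so your rational part agrees with theirs). You simply carry out the algebraic simplifications in more detail than the paper, which states them without derivation.
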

\begin{proof} 
Use of $z=\alpha/2$ and $z=\beta/2$, in turn, in \eqref{eq.vjyhfag} produces
\begin{equation}\label{eq.kdzdqb7}
{\sum_{n = 1}^\infty} \left\lfloor {\frac{n}{2}} \right\rfloor  \frac{{\alpha ^{n + m} }}{{2^n n}} =  
- \frac{3}{4}\alpha^m \log \alpha - \frac{1}{8}\alpha^m \log 5 + \alpha^{m + 2} - \frac{{\alpha^m }}{2}
\end{equation}
and
\begin{equation}\label{eq.fv2fby8}
{\sum_{n = 1}^\infty}  \left\lfloor {\frac{n}{2}} \right\rfloor  \frac{{\beta ^{n + m} }}{{2^n n}} 
= \frac{3}{4}\beta^m \log \alpha - \frac{1}{8}\beta^m \log 5 + \beta^{m + 2} - \frac{{\beta^m }}{2},
\end{equation}
where $m$ is an arbitrary integer. Addition of \eqref{eq.kdzdqb7} and \eqref{eq.fv2fby8} yields \eqref{eq.vpc51jb} while their difference gives \eqref{eq.njnootu}.
\end{proof}

The alternating versions of \eqref{eq.vpc51jb} and \eqref{eq.njnootu} via \eqref{eq.vtkra2n} are stated next. As the proofs are similar we omit them.
\begin{theorem} 
If $m$ is any integer, then
\begin{equation*}
\sum_{n = 1}^\infty (-1)^n\left\lfloor \frac{n}{2} \right\rfloor  \frac{F_{n + m}}{2^n n} 
= \frac{F_{m}}{8} \log 5 + \frac{3}{4\sqrt 5}{L_m}\log \alpha - \frac{L_{m}}{10}
\end{equation*}
and
\begin{equation*}
\sum_{n = 1}^\infty (-1)^n \left\lfloor \frac{n}{2} \right\rfloor \frac{L_{n + m}}{2^n n} 
= \frac{L_m }{8}\log 5 + \frac{3\sqrt 5}{4}  F_m \log \alpha - \frac{ F_{m}}{2}.
\end{equation*}
\end{theorem}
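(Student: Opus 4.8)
The plan is to follow verbatim the strategy already used for \eqref{eq.vpc51jb} and \eqref{eq.njnootu}, but with the alternating series \eqref{eq.vtkra2n} in place of \eqref{eq.vjyhfag}. First I would rewrite \eqref{eq.vtkra2n} in the form
$\sum_{n\ge 1}(-1)^n\lfloor n/2\rfloor z^n/n = -\tfrac14\log\bigl|\tfrac{1-z}{1+z}\bigr| - \tfrac{z}{2(1+z)}$,
and then substitute $z=\alpha/2$ and $z=\beta/2$ in turn; both are legitimate since $\alpha/2<1$ and $|\beta|/2<1$, so the series converge and the closed form applies. This yields two scalar identities, one for $\sum(-1)^n\lfloor n/2\rfloor\,\alpha^n/(2^nn)$ and one for $\sum(-1)^n\lfloor n/2\rfloor\,\beta^n/(2^nn)$.

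The next step is to bring the right-hand sides into closed form using the elementary relations $\alpha+\beta=1$, $\alpha\beta=-1$, $\alpha-\beta=\sqrt5$ together with their consequences $2-\alpha=\beta^2=1/\alpha^2$, $2+\alpha=\sqrt5\,\alpha$, $2-\beta=\alpha^2$, $2+\beta=-\sqrt5\,\beta$. These reduce the logarithmic term to a rational combination of $\log 5$ and $\log\alpha$: at $z=\alpha/2$ one finds $\bigl|\tfrac{1-z}{1+z}\bigr| = 1/(\sqrt5\,\alpha^3)$, hence $-\tfrac14\log|\cdot| = \tfrac18\log 5+\tfrac34\log\alpha$; at $z=\beta/2$ one finds $\bigl|\tfrac{1-z}{1+z}\bigr| = \alpha^3/\sqrt5$, hence $-\tfrac14\log|\cdot| = \tfrac18\log 5-\tfrac34\log\alpha$. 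The rational terms $-\tfrac{z}{2(1+z)}$ collapse to $-\tfrac1{2\sqrt5}$ and $+\tfrac1{2\sqrt5}$ respectively. Multiplying the $\alpha$-identity through by $\alpha^m$ and the $\beta$-identity by $\beta^m$ then produces the analogues of \eqref{eq.kdzdqb7}–\eqref{eq.fv2fby8} for the alternating sums.

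Finally, adding the two resulting identities and invoking the Binet forms \eqref{binet} via $L_m=\alpha^m+\beta^m$ and $\alpha^m-\beta^m=\sqrt5\,F_m$ gives the Lucas formula, while subtracting them and dividing by $\sqrt5$ (using $F_m=(\alpha^m-\beta^m)/\sqrt5$) gives the Fibonacci formula; the asserted constants then drop out directly. The one point requiring genuine care — and the main obstacle — is the absolute-value bookkeeping inside the logarithm: since $\beta<0$, the quantity $\tfrac{1-z}{1+z}$ at $z=\beta/2$ is not visibly positive, so one must check explicitly (as above, it equals $\alpha^3/\sqrt5>0$) that $\log|\cdot|$ genuinely simplifies to $3\log\alpha-\tfrac12\log 5$ with no spurious sign. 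Everything else is the same routine algebra as in the two preceding theorems, which is precisely why those proofs were omitted.
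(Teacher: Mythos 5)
Your proposal is correct and is exactly the argument the paper intends (the authors omit it as "similar" to the preceding theorem): substitute $z=\alpha/2$ and $z=\beta/2$ into \eqref{eq.vtkra2n}, simplify via $2+\alpha=\sqrt5\,\alpha$, $2-\alpha=\beta^2$, $2+\beta=-\sqrt5\,\beta$, $2-\beta=\alpha^2$, then add and subtract using the Binet forms. All the intermediate evaluations you record ($\tfrac{1-z}{1+z}=1/(\sqrt5\,\alpha^3)$ resp.\ $\alpha^3/\sqrt5$, and the rational terms $\mp\tfrac{1}{2\sqrt5}$) check out and yield precisely the stated constants.
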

\begin{lemma} 
If $m$ is a non-negative integer and $|z|<1$, then
\begin{gather}\label{eq.vv7hj30}
\sum_{n=1}^\infty \left\lfloor {\frac{n}{2}} \right\rfloor \binom {n}{m} z^{n - m} 
= - \frac{3}{4(1 - z)^{m + 1}} + \frac{(- 1)^m}{4(1 + z)^{m + 1}} + \frac{m + 1}{2(1 - z)^{m + 2}},\\
\label{eq.ax69bx3}
\sum_{n=1}^\infty (- 1)^{n - 1} \left\lfloor {\frac{n}{2}} \right\rfloor \binom {n}{m} z^{n - m} 
=  \frac{(- 1)^m3}{4(1 + z)^{m + 1}} - \frac{1}{4(1 - z)^{m + 1}} -  \frac{(- 1)^m(m+1)}{2(1 + z)^{m + 2}}.
\end{gather}
\end{lemma}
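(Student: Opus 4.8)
The plan is to recognize both left-hand sides as normalized $m$-th derivatives of the generating functions $f^{+}(z,2)$ and $f^{-}(z,2)$ introduced just above the statement. The key observation is that for every integer $n\ge 0$ we have $\frac{1}{m!}\frac{d^m}{dz^m}z^{n}=\binom{n}{m}z^{n-m}$, where the right-hand side is understood to be $0$ when $n<m$. Since $\lfloor 0/2\rfloor=0$, the series $f^{+}(z,2)=\sum_{n=1}^{\infty}\lfloor n/2\rfloor z^{n}$ converges locally uniformly on $|z|<1$, and so does the series obtained by differentiating it term by term $m$ times; hence
\[
\sum_{n=1}^{\infty}\Big\lfloor \frac{n}{2}\Big\rfloor\binom{n}{m}z^{n-m}
=\frac{1}{m!}\,\frac{d^m}{dz^m}f^{+}(z,2)
=\frac{1}{m!}\,\frac{d^m}{dz^m}\,\frac{z^{2}}{(1-z)^{2}(1+z)},
\]
using $1-z^{2}=(1-z)(1+z)$. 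The analogous identity with $f^{-}(z,2)=\dfrac{z^{2}}{(1-z)(1+z)^{2}}$ handles the alternating sum; alternatively, \eqref{eq.ax69bx3} will follow from \eqref{eq.vv7hj30} by the substitution $z\mapsto -z$ together with $(-z)^{n-m}=(-1)^{m}(-1)^{n}z^{n-m}$ and a multiplication by $-1$ to absorb the sign in $(-1)^{n-1}$.

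Next I would carry out the partial fraction decomposition
\[
\frac{z^{2}}{(1-z)^{2}(1+z)}=-\frac{3}{4}\cdot\frac{1}{1-z}+\frac{1}{2}\cdot\frac{1}{(1-z)^{2}}+\frac{1}{4}\cdot\frac{1}{1+z},
\]
which is obtained by the cover-up method (evaluate after clearing $(1-z)^{2}$ at $z=1$ to get the coefficient $\tfrac12$, clear $(1+z)$ at $z=-1$ to get $\tfrac14$, and compare the coefficient of $z^{2}$ to get $-\tfrac34$). Then I would differentiate term by term $m$ times, using the elementary formulas $\frac{1}{m!}\frac{d^m}{dz^m}\frac{1}{1-z}=\frac{1}{(1-z)^{m+1}}$, $\frac{1}{m!}\frac{d^m}{dz^m}\frac{1}{(1-z)^{2}}=\frac{m+1}{(1-z)^{m+2}}$, and $\frac{1}{m!}\frac{d^m}{dz^m}\frac{1}{1+z}=\frac{(-1)^{m}}{(1+z)^{m+1}}$. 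Collecting the three contributions reproduces exactly the right-hand side of \eqref{eq.vv7hj30}; running the same computation on the partial fractions of $f^{-}(z,2)$ (or applying the $z\mapsto -z$ shortcut above) yields \eqref{eq.ax69bx3}.

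I do not expect a serious obstacle here. The only two points that deserve an explicit word are the legitimacy of differentiating the series term by term (immediate, since a power series may be differentiated term by term inside its disc of convergence) and the bookkeeping of signs and exponents — both in the partial fraction decomposition and, if one uses it, in the $z\mapsto -z$ reduction for the alternating version. Everything beyond that is a routine computation.
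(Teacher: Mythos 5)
Your proof is correct and is exactly the paper's approach: the paper's one-line proof is ``differentiate $f^{+}(z,2)$ and $f^{-}(z,2)$ $m$ times with respect to $z$,'' and you have simply filled in the details (normalization by $m!$, partial fractions, and the $z\mapsto -z$ bookkeeping), all of which check out.
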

\begin{proof} 
Differentiate $f^{+}(z,2)$ and $f^{-}(z,2)$ $m$ times with respect to $z$.
\end{proof}
\begin{theorem}
If $m$ is a non-negative integer, then
\begin{gather*}
\sum_{n=1}^\infty \frac{n}{5^{n+1}} \binom {2n+1}{m} = \frac{(5m +1)F_{m + 1} + (5m-3)F_{m}}{2^{m + 4}},\\
\sum_{n=1}^\infty \frac{n}{5^{n + 1}} \binom {2n}{m} = \frac{(3m + 1)F_{m + 1}+(m+1)F_{m}}{2^{m + 4}}.
\end{gather*}
\end{theorem}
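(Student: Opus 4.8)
The plan is to extract both sums from the two series of the preceding lemma, \eqref{eq.vv7hj30} and \eqref{eq.ax69bx3}, by a parity-sieving argument, and then to set $z=1/\sqrt5$. Write $g^{+}(z)$ and $g^{-}(z)$ for the left-hand sides of \eqref{eq.vv7hj30} and \eqref{eq.ax69bx3}, respectively. Since $1+(-1)^{n-1}$ is $2$ for odd $n$ and $0$ for even $n$, while $1-(-1)^{n-1}$ behaves oppositely, adding and subtracting sieves out the odd- and even-indexed terms, and after writing $z^{2j+1-m}=z^{1-m}z^{2j}$ and $z^{2j-m}=z^{-m}z^{2j}$ one gets
\[
g^{+}(z)+g^{-}(z)=2\,z^{1-m}\sum_{j\ge1}j\binom{2j+1}{m}z^{2j},\qquad
g^{+}(z)-g^{-}(z)=2\,z^{-m}\sum_{j\ge1}j\binom{2j}{m}z^{2j},
\]
the vanishing $j=0$ term having been dropped in the first sum. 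Now take $z=1/\sqrt5$, so $z^{2}=1/5$; since $5^{n+1}=5\cdot5^{n}$, solving for the target sums gives
\[
\sum_{n=1}^{\infty}\frac{n}{5^{n+1}}\binom{2n+1}{m}=\frac{z^{m-1}}{10}\bigl(g^{+}(z)+g^{-}(z)\bigr),\qquad
\sum_{n=1}^{\infty}\frac{n}{5^{n+1}}\binom{2n}{m}=\frac{z^{m}}{10}\bigl(g^{+}(z)-g^{-}(z)\bigr).
\]

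Next I would add and subtract the closed forms on the right of \eqref{eq.vv7hj30} and \eqref{eq.ax69bx3}; the coefficients collapse to
\[
g^{+}(z)+g^{-}(z)=-\frac{1}{(1-z)^{m+1}}+\frac{(-1)^{m}}{(1+z)^{m+1}}+\frac{m+1}{2(1-z)^{m+2}}-\frac{(-1)^{m}(m+1)}{2(1+z)^{m+2}},
\]
\[
g^{+}(z)-g^{-}(z)=-\frac{1}{2(1-z)^{m+1}}-\frac{(-1)^{m}}{2(1+z)^{m+1}}+\frac{m+1}{2(1-z)^{m+2}}+\frac{(-1)^{m}(m+1)}{2(1+z)^{m+2}}.
\]
At $z=1/\sqrt5$ there are the clean evaluations $\frac{1}{1-z}=\frac{\sqrt5\,\alpha}{2}$ and $\frac{1}{1+z}=-\frac{\sqrt5\,\beta}{2}$ (immediate from $2\alpha=1+\sqrt5$, $2\beta=1-\sqrt5$), so $1/(1-z)^{r}=5^{r/2}\alpha^{r}/2^{r}$ and $1/(1+z)^{r}=(-1)^{r}5^{r/2}\beta^{r}/2^{r}$. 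Substituting these for $r\in\{m+1,m+2\}$ and collecting the combinations $\alpha^{m+1}\pm\beta^{m+1}$ and $\alpha^{m+2}\pm\beta^{m+2}$, the Binet forms \eqref{binet} convert the right-hand sides into expressions in $L_{m+1},F_{m+1},L_{m+2},F_{m+2}$.

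Feeding the resulting half-integer powers of $5$ through the prefactors $z^{m-1}/10$ and $z^{m}/10$ (they reduce to a single power of $5$) should give
\[
\sum_{n=1}^{\infty}\frac{n}{5^{n+1}}\binom{2n+1}{m}=\frac{5(m+1)F_{m+2}-4L_{m+1}}{2^{m+4}},\qquad
\sum_{n=1}^{\infty}\frac{n}{5^{n+1}}\binom{2n}{m}=\frac{(m+1)L_{m+2}-2F_{m+1}}{2^{m+4}}.
\]
It then only remains to express the Lucas numbers via Fibonacci numbers: using $L_{m+1}=F_{m+1}+2F_{m}$, $L_{m+2}=3F_{m+1}+F_{m}$ and $F_{m+2}=F_{m+1}+F_{m}$ turns these into $\bigl((5m+1)F_{m+1}+(5m-3)F_{m}\bigr)/2^{m+4}$ and $\bigl((3m+1)F_{m+1}+(m+1)F_{m}\bigr)/2^{m+4}$, as claimed. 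I expect the only real obstacle to be the sign and power-of-$5$ bookkeeping in the specialization $z=1/\sqrt5$; everything else is mechanical, and there is no convergence concern since $|z|=1/\sqrt5<1$ and the summands grow only polynomially in $n$.
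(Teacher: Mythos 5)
Your proposal is correct and is precisely the paper's own proof (which simply says to set $z=1/\sqrt5$ in \eqref{eq.vv7hj30} and \eqref{eq.ax69bx3}, add and subtract, and simplify), with the parity sieve, the evaluations $\frac{1}{1-z}=\frac{\sqrt5\,\alpha}{2}$, $\frac{1}{1+z}=-\frac{\sqrt5\,\beta}{2}$, and the final reduction of $5(m+1)F_{m+2}-4L_{m+1}$ and $(m+1)L_{m+2}-2F_{m+1}$ all checking out. No gaps.
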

\begin{proof}
Set $z=\frac{1}{\sqrt 5}$ in \eqref{eq.vv7hj30} and \eqref{eq.ax69bx3}; add and subtract the resulting identities and simplify.
\end{proof}
\begin{theorem}
If $m$ is a non-negative integer, then
\begin{align*}
\sum_{n=1}^\infty n \binom{2n+1}{2m}\left(\frac{5}{9}\right)^n &= \frac{9}{16}\left(\frac{5}{4}\right)^m \big((6m+3)F_{4m+4} - 4F_{4m+2}\big),\\
\sum_{n=1}^\infty n \binom{2n+1}{2m+1}\left(\frac{5}{9}\right)^n &=\frac{9}{32}\left(\frac{5}{4}\right)^m \big((6m+6)L_{4m+6} - 4L_{4m+4}\big),\\
\sum_{n=1}^\infty n \binom{2n}{2m}\left(\frac{5}{9}\right)^n &=\frac{3}{16}\left(\frac{5}{4}\right)^m \big((6m+3)L_{4m+4} - 2L_{4m+2}\big)
\end{align*}
and
\begin{equation*}
\sum_{n=1}^\infty n \binom{2n}{2m+1}\left(\frac{5}{9}\right)^n =\frac{15}{16}\left(\frac{5}{4}\right)^m \big((3m+3)F_{4m+6} - F_{4m+4}\big).
\end{equation*}
\end{theorem}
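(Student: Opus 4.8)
The plan is to specialize the two identities \eqref{eq.vv7hj30} and \eqref{eq.ax69bx3}, taking the free parameter there to be $M=2m$ in one pass and $M=2m+1$ in another, and then to separate even- and odd-indexed terms. Since $\lfloor n/2\rfloor=j$ whether $n=2j$ or $n=2j+1$, adding \eqref{eq.vv7hj30} and \eqref{eq.ax69bx3} cancels the even-indexed terms and leaves $2\sum_{j\ge1} j\binom{2j+1}{M}z^{2j+1-M}$, while subtracting them cancels the odd-indexed terms and leaves $2\sum_{j\ge1} j\binom{2j}{M}z^{2j-M}$. Running this for $M=2m$ and $M=2m+1$ yields exactly the four binomial patterns $\binom{2n+1}{2m}$, $\binom{2n+1}{2m+1}$, $\binom{2n}{2m}$, $\binom{2n}{2m+1}$ of the theorem, each now expressed through $(1-z)^{-(M+1)}$, $(1+z)^{-(M+1)}$, $(1-z)^{-(M+2)}$, $(1+z)^{-(M+2)}$.

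Next I substitute $z=\sqrt5/3$, so that $z^2=5/9$ and every summand picks up the factor $(5/9)^j$ up to one overall power of $z$ which is pulled outside. The key algebraic simplification is that $1-\sqrt5/3=(3-\sqrt5)/3=\tfrac23\beta^2$ and $1+\sqrt5/3=(3+\sqrt5)/3=\tfrac23\alpha^2$, because $\alpha^2=(3+\sqrt5)/2$ and $\beta^2=(3-\sqrt5)/2$; combined with $\alpha^2\beta^2=1$ this gives $(1-z)^{-1}=\tfrac32\alpha^2$ and $(1+z)^{-1}=\tfrac32\beta^2$. Consequently each combination $(1-z)^{-(M+1)}\pm(1+z)^{-(M+1)}$ becomes $(3/2)^{M+1}\big(\alpha^{2(M+1)}\pm\beta^{2(M+1)}\big)$, which by the Binet forms \eqref{binet} is $(3/2)^{M+1}\sqrt5\,F_{2(M+1)}$ for the minus sign and $(3/2)^{M+1}L_{2(M+1)}$ for the plus sign (and likewise with $M+1$ replaced by $M+2$). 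Collecting terms, the right sides turn into integer combinations of $F_{4m+2},F_{4m+4},F_{4m+6}$ or of $L_{4m+2},L_{4m+4},L_{4m+6}$ weighted by powers of $2$, $3$ and, in the Fibonacci cases, a $\sqrt5$.

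Finally I reconcile the constants: the pulled-out power of $z=\sqrt5/3$ contributes a power of $5/9$ and, according to the parity of $M$, at most one residual factor $3/\sqrt5$ which cancels the $\sqrt5$ from the Binet conversion in the Fibonacci cases; the powers of $3$ produced by the $(3/2)^{M+1}$ and $(3/2)^{M+2}$ prefactors cancel the $9^{-m}$, leaving the announced coefficients $\tfrac{9}{16}(5/4)^m$, $\tfrac{9}{32}(5/4)^m$, $\tfrac{3}{16}(5/4)^m$ and $\tfrac{15}{16}(5/4)^m$. The only genuinely delicate part is the parity bookkeeping between $M=2m$ and $M=2m+1$ together with the careful tracking of the powers of $2$, $3$ and $\sqrt5$; no idea beyond \eqref{eq.vv7hj30}, \eqref{eq.ax69bx3} and the Binet forms is required, and the case $m=0$ (where all four series collapse to $\sum_{n\ge1}n(5/9)^n=45/16$) provides a convenient normalization check.
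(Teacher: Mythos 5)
Your proof is correct and follows essentially the same route as the paper's: substitute $z=\sqrt5/3$ into \eqref{eq.vv7hj30} and \eqref{eq.ax69bx3}, add and subtract to isolate the odd- and even-indexed terms $2\sum_j j\binom{2j+1}{M}z^{2j+1-M}$ and $2\sum_j j\binom{2j}{M}z^{2j-M}$, split into the cases $M=2m$ and $M=2m+1$, and convert via $(1\mp z)^{-1}=\tfrac32\alpha^{\pm2}$ and the Binet forms; I checked the first identity through to the coefficient $\tfrac{9}{16}(5/4)^m\big((6m+3)F_{4m+4}-4F_{4m+2}\big)$ and it comes out exactly. One small slip in your closing aside: at $m=0$ only the first and third series reduce to $\sum_{n\ge1}n(5/9)^n=45/16$, while the second and fourth become $\sum_{n\ge1}n(2n+1)(5/9)^n$ and $2\sum_{n\ge1}n^2(5/9)^n$, so the normalization check applies only to two of the four formulas.
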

\begin{proof}
Set $z=\sqrt5/3$ in \eqref{eq.vv7hj30} and \eqref{eq.ax69bx3}; add and subtract the resulting identities and simplify by considering the cases of even and odd $m$.
\end{proof}
\begin{theorem}  
For $p$ an integer and $m$ a non-negative integer, we have the identities
\begin{align*}
\sum_{n = 1}^\infty \left\lfloor \frac{n}{2} \right\rfloor \binom {n}{m} \frac{F_{n+p}}{2^{n-1}} & =  
4(m+1)F_{3m+p+4} - {3} F_{3m+p+2} + {(-1)^m}
\begin{cases}
5^{-{(m+2)}/2} L_{p-1}, & \text{\rm $m$ even;} \\[4pt] 5^{-{(m+1)}/2} F_{p-1}, & \text{\rm $m$ odd,} 
\end{cases} 
\end{align*}
and
\begin{align*}
\sum_{n = 1}^\infty \left\lfloor \frac{n}{2} \right\rfloor \binom {n}{m} \frac{L_{n+p}}{2^{n-1}} & =  
4(m+1)L_{3m+p+4} - {3} L_{3m+p+2} + {(-1)^m}
\begin{cases}
5^{-m/2} F_{p-1}, & \text{\rm $m$ even;} \\[4pt] 5^{-{(m+1)}/2} L_{p-1}, & \text{\rm $m$ odd.
}
\end{cases} 
\end{align*}
\end{theorem}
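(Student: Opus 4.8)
The plan is to reduce both identities to the single power series \eqref{eq.vv7hj30}, evaluated at the two arguments $z=\alpha/2$ and $z=\beta/2$ (both of modulus $<1$, so the substitutions are legitimate), and then recombine via the Binet forms \eqref{binet}. Using $\alpha-\beta=\sqrt5$, write $F_{n+p}/2^{n-1}=\frac{2}{\sqrt5}\big(\alpha^{p}(\alpha/2)^{n}-\beta^{p}(\beta/2)^{n}\big)$ and $L_{n+p}/2^{n-1}=2\big(\alpha^{p}(\alpha/2)^{n}+\beta^{p}(\beta/2)^{n}\big)$. Then the target sums become $\frac{2}{\sqrt5}\big(\alpha^{p}S_{\alpha}-\beta^{p}S_{\beta}\big)$ and $2\big(\alpha^{p}S_{\alpha}+\beta^{p}S_{\beta}\big)$, where $S_{\gamma}=S_{\gamma}(m):=\sum_{n\ge1}\lfloor n/2\rfloor\binom{n}{m}(\gamma/2)^{n}$ for $\gamma\in\{\alpha,\beta\}$.

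Next I would compute $S_{\alpha}$ and $S_{\beta}$ explicitly. Since $\sum_{n\ge1}\lfloor n/2\rfloor\binom{n}{m}z^{n}=z^{m}\sum_{n\ge1}\lfloor n/2\rfloor\binom{n}{m}z^{n-m}$, it suffices to put $z=\alpha/2$ and $z=\beta/2$ in \eqref{eq.vv7hj30} and multiply by $(\alpha/2)^{m}$ and $(\beta/2)^{m}$ respectively. The computation rests on the elementary relations $2-\alpha=\beta^{2}$, $2+\alpha=\sqrt5\,\alpha$, $2-\beta=\alpha^{2}$, $2+\beta=-\sqrt5\,\beta$, together with $\alpha\beta=-1$ (whence $1/\beta^{2}=\alpha^{2}$ and $1/\alpha^{2}=\beta^{2}$). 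After clearing the resulting powers of $2$ and simplifying one obtains
\begin{equation*}
S_{\alpha}=2(m+1)\alpha^{3m+4}-\tfrac32\alpha^{3m+2}+\frac{(-1)^{m}}{2\cdot5^{(m+1)/2}\,\alpha},\qquad S_{\beta}=2(m+1)\beta^{3m+4}-\tfrac32\beta^{3m+2}-\frac{1}{2\cdot5^{(m+1)/2}\,\beta}.
\end{equation*}
The asymmetry in the last terms is forced by $(1+\beta/2)^{m+1}=(-\sqrt5\,\beta/2)^{m+1}$, which carries an extra sign $(-1)^{m+1}$ not present in $(1+\alpha/2)^{m+1}=(\sqrt5\,\alpha/2)^{m+1}$; this is exactly the source of the even/odd dichotomy in the statement.

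Finally I would assemble the two combinations. The first two terms of $S_{\alpha}$ and $S_{\beta}$ recombine, via \eqref{binet}, into $4(m+1)F_{3m+p+4}-3F_{3m+p+2}$ (Fibonacci case) and $4(m+1)L_{3m+p+4}-3L_{3m+p+2}$ (Lucas case). The residual contributions are $\frac{1}{\sqrt5\cdot5^{(m+1)/2}}\big((-1)^{m}\alpha^{p-1}+\beta^{p-1}\big)$ for Fibonacci and $\frac{1}{5^{(m+1)/2}}\big((-1)^{m}\alpha^{p-1}-\beta^{p-1}\big)$ for Lucas. Splitting the parity of $m$, the quantity $(-1)^{m}\alpha^{p-1}\pm\beta^{p-1}$ becomes $\pm L_{p-1}$ or $\pm\sqrt5\,F_{p-1}$, and absorbing the stray $\sqrt5$ into the power of $5$ via $\sqrt5/5^{(m+1)/2}=5^{-m/2}$ produces precisely the four branches displayed in the theorem. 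I expect the only delicate part to be this last bookkeeping of half-integer powers of $5$ and of the sign $(-1)^{m+1}$; everything else is a mechanical application of \eqref{eq.vv7hj30} and the Binet forms, and the alternating series \eqref{eq.ax69bx3} is not needed here.
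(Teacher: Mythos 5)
Your proposal is correct and follows exactly the route the paper intends (the paper states this theorem without proof, but all the surrounding results are obtained by substituting Fibonacci-related arguments into \eqref{eq.vv7hj30}): setting $z=\alpha/2$ and $z=\beta/2$, using $2-\alpha=\beta^2$, $2+\alpha=\sqrt5\,\alpha$, $2-\beta=\alpha^2$, $2+\beta=-\sqrt5\,\beta$, and recombining via the Binet forms. Your intermediate expressions for $S_\alpha$ and $S_\beta$, the residual terms, and the final parity bookkeeping of the half-integer powers of $5$ all check out against the stated identities.
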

\begin{lemma}
For a positive integer $k$ and $|z|<1$,
\begin{gather}\label{eq.djwcsym} 
\sum_{n = 1}^\infty  {\left\lfloor {\frac{n}{k}} \right\rfloor \left( {\frac{{z^n }}{n} - \frac{{z^{n + 1} }}{{n + 1}}} \right)} 
= - \frac{1}{k} \log \left(1 - z^k\right)\!, \\
\label{eq.wn9acmr}
\sum_{n = 1}^\infty  {( - 1)^{n - 1} \left\lfloor {\frac{n}{k}} \right\rfloor \left( {\frac{{z^n }}{n} + \frac{{z^{n + 1} }}{{n + 1}}} \right)} = \frac{1}{k}\log \left(1 - (- 1)^k z^k \right).
\end{gather}
\end{lemma}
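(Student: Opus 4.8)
The plan is to prove both identities with a single telescoping device, reducing each series to a logarithm series supported on the multiples of $k$. I will carry out the first identity in detail and then indicate the sign bookkeeping needed for the second.

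First I would set $c_n = \lfloor n/k \rfloor$, so that $c_0 = 0$, and note that for $|z| < 1$ one has $|c_n z^n / n| \le |z|^n$; hence every series in sight converges absolutely and may be split and reindexed freely. Splitting the left-hand side of \eqref{eq.djwcsym} and sending $n \mapsto n-1$ in the second piece gives
\begin{equation*}
\sum_{n=1}^\infty c_n\Big(\frac{z^n}{n} - \frac{z^{n+1}}{n+1}\Big) = \sum_{n=1}^\infty c_n\frac{z^n}{n} - \sum_{n=2}^\infty c_{n-1}\frac{z^n}{n} = \sum_{n=1}^\infty (c_n - c_{n-1})\frac{z^n}{n},
\end{equation*}
where absorbing the $n=1$ term into the last sum is legitimate because $c_0 z = 0$. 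The crux is the elementary observation that $c_n - c_{n-1} = \lfloor n/k \rfloor - \lfloor (n-1)/k \rfloor$ equals $1$ when $k \mid n$ and $0$ otherwise; this follows by writing $n = qk + r$ with $0 \le r < k$ and comparing the cases $r = 0$ and $r \ge 1$. Consequently the sum collapses to $\sum_{q \ge 1} z^{qk}/(qk) = \tfrac1k \sum_{q \ge 1} (z^k)^q/q = -\tfrac1k \log(1 - z^k)$, using $|z^k| < 1$ and the principal branch of the logarithm, which is \eqref{eq.djwcsym}.

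For \eqref{eq.wn9acmr} I would apply the same manoeuvre to $\sum_{n\ge1} (-1)^{n-1} c_n \big(z^n/n + z^{n+1}/(n+1)\big)$. Reindexing the second piece introduces a sign flip, since $(-1)^{(n-1)-1} = -(-1)^{n-1}$, and this is precisely compensated by the $+$ sign separating the two fractions; the series therefore telescopes to $\sum_{n\ge1} (-1)^{n-1}(c_n - c_{n-1}) z^n/n = \sum_{q\ge1} (-1)^{qk-1} z^{qk}/(qk)$. Writing $(-1)^{qk-1} = -\big((-1)^k\big)^q$ converts this into $-\tfrac1k \sum_{q\ge1} \big((-1)^k z^k\big)^q / q = \tfrac1k \log\!\big(1 - (-1)^k z^k\big)$, using $|(-1)^k z^k| < 1$.

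An alternative matching the style of the earlier proofs is to write $z^n/n - z^{n+1}/(n+1) = \int_0^z t^{n-1}(1-t)\,dt$, interchange sum and integral (uniform convergence on compact subsets of the unit disc), and invoke Lemma \ref{example1}: the integrand becomes $(1-t)\,f^{+}(t,k)/t = t^{k-1}/(1-t^k)$, whose primitive is $-\tfrac1k\log(1-t^k)$, and analogously with $f^{-}$ for \eqref{eq.wn9acmr}. I do not expect a genuine obstacle here; the only thing demanding attention is the clean handling of the $(-1)^{k}$ versus $(-1)^{k+1}$ factors in the alternating case together with the branch of the logarithm, both of which are innocuous for $|z| < 1$.
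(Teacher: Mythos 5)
Your proposal is correct, and your primary argument is genuinely different from the paper's. The paper rewrites \eqref{ex1_floor_id1} as $\sum_{n\ge1}\lfloor n/k\rfloor(1-z)z^{n-1}=z^{k-1}/(1-z^k)$ and integrates term by term from $0$ to $z$, which is exactly the alternative you sketch in your last paragraph. Your main proof instead telescopes: after splitting and reindexing (justified by the absolute convergence you note), the coefficient collapses to $\lfloor n/k\rfloor-\lfloor(n-1)/k\rfloor$, which is the indicator of $k\mid n$, so the series is literally $\sum_{q\ge1}z^{qk}/(qk)$ and the logarithm appears with no integration and no appeal to the generating function of Lemma \ref{example1}. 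This is more elementary and self-contained, and it has the small additional virtue of handling the alternating case \eqref{eq.wn9acmr} by the same mechanism with transparent sign bookkeeping (your computation $(-1)^{qk-1}=-\big((-1)^k\big)^q$ is right), whereas the paper simply substitutes $z\mapsto-z$ into \eqref{eq.djwcsym}. The paper's route, for its part, fits the surrounding development, where \eqref{ex1_floor_id1} is repeatedly differentiated and integrated to generate new identities. Both arguments are sound.
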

\begin{proof}
Write \eqref{ex1_floor_id1} as
\begin{equation*}
\sum_{n = 1}^\infty \left\lfloor {\frac{n}{k}} \right\rfloor (1 - z)z^{n - 1} = \frac{{z^{k - 1} }}{{1 - z^k }}
\end{equation*}
and integrate both sides with respect to $z$ to obtain \eqref{eq.djwcsym}. The second identity follows from \eqref{eq.djwcsym} by replacing $z$ with $-z$.
\end{proof}
\begin{theorem}\label{th8} 
If $k$ is an integer with $k\ge 1$, $r$ is any integer and $m$ is an even integer, then
\begin{equation}\label{eq.h0nbyst}
\begin{split}
&\qquad\qquad\qquad\sum_{n = 1}^\infty \frac{\left\lfloor {\frac{n}{k}} \right\rfloor}{L_m^{n+1}} \left( {\frac{{L_{mn + r}L_m }}{{n}} - \frac{{L_{m(n + 1) + r} }}{n + 1}} \right)\\
& = -\frac{\beta^r}k\log \left( {L_m^{2k}  - L_m^k L_{mk}  + 1} \right)- \frac{\sqrt 5}{k}F_r  \log \left( {L_m^k  - \alpha ^{mk} } \right) + L_r \log L_m,
\end{split}
\end{equation}
\begin{equation}\label{eq.bxdefgi}
\begin{split}
&\qquad\qquad\qquad\sum_{n = 1}^\infty \frac{\left\lfloor {\frac{n}{k}} \right\rfloor}{L_m^{n+1}} \left( {\frac{F_{mn + r}L_m }{n} - \frac{F_{m(n + 1) + r}}{n + 1 }} \right)\\
& = \frac{\beta^r}{\sqrt 5\,k}\log \left( {L_m^{2k}  - L_m^k L_{mk}  + 1} \right) - \frac{L_r}{ \sqrt 5\,k} \log \left( {L_m^k  - \alpha ^{mk} } \right) + F_r\log L_m.
\end{split}
\end{equation}
\end{theorem}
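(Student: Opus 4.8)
The plan is to feed the integrated identity \eqref{eq.djwcsym} the two arguments $z=\alpha^m/L_m$ and $z=\beta^m/L_m$ and then recombine through the Binet forms \eqref{binet}. First I would verify that both substitutions are admissible. Since $m$ is even, $\beta^m=|\beta|^m>0$, and $L_m=\alpha^m+\beta^m$ is a sum of two positive reals; hence $0<\alpha^m/L_m<1$ and $0<\beta^m/L_m<1$, so \eqref{eq.djwcsym} applies to each. Moreover $1-(\alpha^m/L_m)^k=(L_m^k-\alpha^{mk})/L_m^k>0$, and likewise with $\beta$ in place of $\alpha$, so all the logarithms that arise are real.

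Next I would multiply the instance of \eqref{eq.djwcsym} at $z=\alpha^m/L_m$ through by $\alpha^r$ and the instance at $z=\beta^m/L_m$ through by $\beta^r$, obtaining
\[
\sum_{n=1}^\infty \Big\lfloor\frac nk\Big\rfloor\Big(\frac{\alpha^{mn+r}}{L_m^n\,n}-\frac{\alpha^{m(n+1)+r}}{L_m^{n+1}(n+1)}\Big)=-\frac{\alpha^r}{k}\log\frac{L_m^k-\alpha^{mk}}{L_m^k}
\]
together with the analogous identity carrying $\beta$ throughout. Adding the two and invoking $\alpha^{j}+\beta^{j}=L_{j}$ collapses the left side into $\sum_{n\ge1}\lfloor n/k\rfloor L_m^{-(n+1)}\big(L_{mn+r}L_m/n-L_{m(n+1)+r}/(n+1)\big)$, the left side of \eqref{eq.h0nbyst}; subtracting them and dividing by $\alpha-\beta=\sqrt5$ yields, via $(\alpha^{j}-\beta^{j})/\sqrt5=F_{j}$, the left side of \eqref{eq.bxdefgi}.

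It then remains to tidy up the right-hand sides. Writing $\log\frac{L_m^k-\alpha^{mk}}{L_m^k}=\log(L_m^k-\alpha^{mk})-k\log L_m$ (and similarly for $\beta$), the $\log L_m$ pieces assemble into $L_r\log L_m$ and $F_r\log L_m$ respectively, using $\alpha^r+\beta^r=L_r$ and $\alpha^r-\beta^r=\sqrt5\,F_r$. For the surviving logarithms put $A=\log(L_m^k-\alpha^{mk})$ and $B=\log(L_m^k-\beta^{mk})$; since $m$ is even we have $(\alpha\beta)^{mk}=1$ and $\alpha^{mk}+\beta^{mk}=L_{mk}$, so $A+B=\log\big((L_m^k-\alpha^{mk})(L_m^k-\beta^{mk})\big)=\log(L_m^{2k}-L_m^kL_{mk}+1)$. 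The asymmetric closed forms in the statement then drop out of the elementary rearrangements $\alpha^rA+\beta^rB=\beta^r(A+B)+(\alpha^r-\beta^r)A$ and $\alpha^rA-\beta^rB=L_rA-\beta^r(A+B)$, after dividing by $k$ (and, for \eqref{eq.bxdefgi}, additionally by $\sqrt5$).

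The only genuinely delicate step is the admissibility check: it is precisely the positivity of $\beta^m$ for even $m$ that keeps both arguments inside the unit disc and keeps $1-z^k$ positive, which is why the hypothesis ``$m$ even'' cannot be dropped here. Everything after that is bookkeeping with the Binet forms; the one thing to resist is symmetrizing the right-hand side prematurely in the final step, since the stated formulas deliberately retain $\log(L_m^k-\alpha^{mk})$ rather than a symmetric combination of $A$ and $B$.
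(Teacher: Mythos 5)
Your proposal is correct and follows exactly the paper's own route: substitute $z=\alpha^m/L_m$ and $z=\beta^m/L_m$ into \eqref{eq.djwcsym}, scale by $\alpha^r$ and $\beta^r$, and recombine via the Binet forms, with the identity $(L_m^k-\alpha^{mk})(L_m^k-\beta^{mk})=L_m^{2k}-L_m^kL_{mk}+1$ (valid since $(\alpha\beta)^{mk}=1$ for even $m$) producing the stated right-hand sides. You merely supply the admissibility check and the final bookkeeping that the paper leaves implicit in ``from which \eqref{eq.h0nbyst} and \eqref{eq.bxdefgi} follow.''
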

\begin{proof}
Choosing $z=\alpha^m/L_m$ and $z=\beta^m/L_m$ in \eqref{eq.djwcsym} produces
\[
\sum_{n = 1}^\infty  {\left\lfloor {\frac{n}{k}} \right\rfloor \left( {\frac{{\alpha ^{mn + r} }}{{nL_m^n }} - \frac{{\alpha ^{m(n + 1) + r} }}{{(n + 1)L_m^{n + 1} }}} \right)}  =  - \frac{{\alpha ^r }}{k}\log \left( {\frac{{L_m^k  - \alpha ^{mk} }}{{L_m^k }}} \right)
\] 
and
\[
\sum_{n = 1}^\infty  {\left\lfloor {\frac{n}{k}} \right\rfloor \left( {\frac{{\beta ^{mn + r} }}{{nL_m^n }} - \frac{{\beta ^{m(n + 1) + r} }}{{(n + 1)L_m^{n + 1} }}} \right)}  =  - \frac{{\beta ^r }}{k}\log \left( {\frac{{L_m^k  - \beta ^{mk} }}{{L_m^k }}} \right);
\] 
from which \eqref{eq.h0nbyst} and \eqref{eq.bxdefgi} follow.
\end{proof}
\begin{theorem} If $k$ is an integer with $k\ge 1$, $r$ is any integer and $m$ is an odd integer, then
\begin{align*}
&\qquad\qquad\qquad\sum_{n = 1}^\infty \frac{\left\lfloor {\frac{n}{k}} \right\rfloor}{\sqrt{5^n} F_m^{n+1}} \!\left(   \frac{\sqrt5 L_{mn + r}F_m}{n} - \frac{L_{m(n + 1) + r}}{n + 1}\right)\! \\ 
&=\! -\frac{\sqrt5 \alpha^r}{k} \log\! \big( {5^kF_m^{2k}  - \sqrt{5^k}F_m^k L_{mk}  + (-1)^k} \big) + \frac{5 F_r}{k}  \log \big( {\sqrt{5^k} F_m^k  - \beta ^{mk} } \big) + \sqrt5 L_r\log (\sqrt5 F_m),\\
&\qquad\qquad\qquad\sum_{n = 1}^\infty \frac{\left\lfloor {\frac{n}{k}} \right\rfloor}{\sqrt{5^n} F_m^{n+1}}\!\left(   \frac{\sqrt5 F_{mn + r}F_m}{n} - \frac{F_{m(n + 1) + r}}{n + 1}\right)\! \\
&= - \frac{\alpha^r}{ k} \log \left( {5^kF_m^{2k}  - \sqrt{5^k}F_m^k L_{mk}  + (-1)^k} \right) + \frac{ L_r}{ k}  \log \big( {\sqrt{5^k} F_m^k  - \beta ^{mk} } \big) + \sqrt5 F_r\log (\sqrt5 F_m).
\end{align*}
\end{theorem}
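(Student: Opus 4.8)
\emph{Proof plan.} The strategy mirrors the proof of Theorem~\ref{th8}: only the substitution into \eqref{eq.djwcsym} has to be adapted to the parity of $m$. When $m$ is even one uses $z=\alpha^m/L_m$ and $z=\beta^m/L_m$, which works because $\alpha^m+\beta^m=L_m$ and $\alpha^m\beta^m=(\alpha\beta)^m=1$. When $m$ is odd we instead have $\alpha^m\beta^m=-1$, and the self-conjugate quantity taking over the role of $L_m$ is $\alpha^m-\beta^m=\sqrt5\,F_m$. So I would substitute $z=\alpha^m/(\sqrt5\,F_m)$ and then $z=\beta^m/(\sqrt5\,F_m)$ in \eqref{eq.djwcsym}. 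First one checks $|z|<1$: for odd $m$ the numbers $\alpha^m$ and $\beta^m$ have opposite signs (an odd power preserves the sign of $\alpha>0>\beta$), so $|\alpha^m-\beta^m|=|\alpha^m|+|\beta^m|$ exceeds each of $|\alpha^m|$ and $|\beta^m|$; moreover $F_m>0$ for odd $m$, so $z$ is real and lies in $(-1,1)$, and all the logarithm arguments met below are positive real numbers.

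Carrying out the substitution, multiplying the $\alpha$-version by $\alpha^r$ and the $\beta$-version by $\beta^r$, and using $(\sqrt5\,F_m)^k=\sqrt{5^k}\,F_m^k$ to rewrite $1-z^k$, I would reach
\[
\sum_{n=1}^\infty\Big\lfloor\frac nk\Big\rfloor\Big(\frac{\alpha^{mn+r}}{n(\sqrt5\,F_m)^n}-\frac{\alpha^{m(n+1)+r}}{(n+1)(\sqrt5\,F_m)^{n+1}}\Big)=-\frac{\alpha^r}{k}\log\big(\sqrt{5^k}F_m^k-\alpha^{mk}\big)+\alpha^r\log\big(\sqrt5\,F_m\big)
\]
and the same identity with $\alpha$ replaced by $\beta$ everywhere. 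Adding the two identities (for the Lucas series) or subtracting the $\beta$-version from the $\alpha$-version (for the Fibonacci series), then invoking $\alpha^{j}+\beta^{j}=L_j$ and $\alpha^{j}-\beta^{j}=\sqrt5\,F_j$ for the index $j=mn+r$ inside the sum and for $j=r$ outside it, together with the elementary identity $\sqrt5\,F_m/(\sqrt{5^n}F_m^{n+1})=\sqrt5/(\sqrt5\,F_m)^n$, turns the left-hand side into exactly the series displayed in the statement, up to an overall factor $\sqrt5$.

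It then remains to recast the right-hand sides. Because $m$ is odd, $\alpha^{mk}\beta^{mk}=(-1)^{mk}=(-1)^k$, so
\[
\big(\sqrt{5^k}F_m^k-\alpha^{mk}\big)\big(\sqrt{5^k}F_m^k-\beta^{mk}\big)=5^kF_m^{2k}-\sqrt{5^k}F_m^kL_{mk}+(-1)^k,
\]
which is the argument of the first logarithm in both formulas. Setting $P=\log\big(\sqrt{5^k}F_m^k-\alpha^{mk}\big)$ and $Q=\log\big(\sqrt{5^k}F_m^k-\beta^{mk}\big)$, we have $P+Q=\log\big(5^kF_m^{2k}-\sqrt{5^k}F_m^kL_{mk}+(-1)^k\big)$, and the combinations $\alpha^rP+\beta^rQ$ and $\alpha^rP-\beta^rQ$ are brought into the asserted shape simply by substituting $\beta^r=\alpha^r-\sqrt5\,F_r$ in the Lucas case and $\beta^r=L_r-\alpha^r$ in the Fibonacci case. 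I do not anticipate a genuine obstacle: there is no new idea beyond what already appears in the proof of Theorem~\ref{th8}, and the only thing that truly needs care is the bookkeeping of the powers of $5$ and of the signs produced by $k$ even versus odd --- which is exactly why the admissibility $|z|<1$ and the positivity of the logarithm arguments should be established first.
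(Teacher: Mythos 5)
Your proposal is correct and follows exactly the paper's own route: the paper's proof is precisely ``set $z=\alpha^m/(\sqrt5F_m)$ and $z=\beta^m/(\sqrt5F_m)$ in \eqref{eq.djwcsym} and proceed as in Theorem~\ref{th8}.'' Your additional checks (that $|z|<1$ and the logarithm arguments are positive for odd $m$, and the recombination via $\beta^r=\alpha^r-\sqrt5F_r$ resp.\ $\beta^r=L_r-\alpha^r$) are accurate and merely make explicit what the paper leaves to the reader.
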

\begin{proof}
Set $z=\alpha^m/(\sqrt5F_m)$ and $z=\beta^m/(\sqrt5F_m)$, in turn, in \eqref{eq.djwcsym} and proceed as in the proof of Theorem \ref{th8}.
\end{proof}
The alternating versions of \eqref{eq.h0nbyst} and \eqref{eq.bxdefgi}, obtained from \eqref{eq.wn9acmr}, are left as an exercise.

\medskip
In the next Lemma we present a generalization of \eqref{eq.djwcsym} and \eqref{eq.wn9acmr}.
\begin{lemma}
For an integer $k\ge 1$, any integer $m$ and $|z|\leq1$,
\begin{gather}\label{eq.x8nwnxn}
\sum_{n = 1}^\infty  {\left\lfloor {\frac{n}{k}} \right\rfloor \left( {\frac{{z^n }}{{n^m }} - \frac{{z^{n + 1} }}{{(n + 1)^m }}} \right)}  = \frac{1}{{k^m }}\Li_m (z^k ),\\
\sum_{n = 1}^\infty {( - 1)^{n - 1} \left\lfloor {\frac{n}{k}} \right\rfloor \left( {\frac{{z^n }}{{n^m }} + \frac{{z^{n + 1} }}{{(n + 1)^m }}} \right)} = - \frac{1}{{k^m }}\Li_m \big((- 1)^k z^k \big),\nonumber
\end{gather}
where $\Li_m(x)=\sum\limits_{k=1}^{\infty}\frac{x^k}{k^m}$ is a polylogarithm.
\end{lemma}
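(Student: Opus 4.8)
The plan is to observe that the series in \eqref{eq.x8nwnxn} is telescoping: the bracketed factor $z^n/n^m-z^{n+1}/(n+1)^m$ is a backward difference, and summing it against $\lfloor n/k\rfloor$ collapses everything onto the terms indexed by multiples of $k$. Setting $b_n=z^n/n^m$, I would evaluate the partial sum by summation by parts; with the convention $\lfloor 0/k\rfloor=0$ this gives
$$\sum_{n=1}^N\lfloor n/k\rfloor\,(b_n-b_{n+1})=\sum_{n=1}^N\big(\lfloor n/k\rfloor-\lfloor(n-1)/k\rfloor\big)\,b_n-\lfloor N/k\rfloor\,b_{N+1}.$$

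The arithmetic input is the elementary identity $\lfloor n/k\rfloor-\lfloor(n-1)/k\rfloor=[\,k\mid n\,]$, proved by writing $n=qk+r$ with $0\le r<k$ and separating the cases $r=0$ and $r\ge1$. Hence the first sum on the right equals $\sum_{1\le n\le N,\,k\mid n}b_n=\sum_{1\le j\le N/k}b_{kj}=k^{-m}\sum_{1\le j\le N/k}(z^k)^j/j^m$, while the boundary term satisfies $|\lfloor N/k\rfloor\,b_{N+1}|\le N^{1-m}/k$ for $|z|\le1$, hence tends to $0$ as $N\to\infty$ when $m\ge2$; for $m=1$ the statement is already \eqref{eq.djwcsym} (valid for $|z|<1$), and for $m\le0$ one restricts to $|z|<1$, where $|z|^{N+1}$ kills the boundary term. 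Letting $N\to\infty$ then yields $k^{-m}\Li_m(z^k)$. The alternating identity is the same computation with $b_n=(-1)^{n-1}z^n/n^m$: the outer sign $(-1)^{n-1}$ combines with the ``$+$'' inside the bracket so that $(-1)^{n-1}\lfloor n/k\rfloor\big(z^n/n^m+z^{n+1}/(n+1)^m\big)=\lfloor n/k\rfloor(b_n-b_{n+1})$, and the telescoping now produces $\sum_{j\ge1}(-1)^{kj-1}z^{kj}/(kj)^m=-k^{-m}\sum_{j\ge1}\big((-1)^kz^k\big)^j/j^m=-k^{-m}\Li_m\big((-1)^kz^k\big)$.

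An alternative, closer to the style of the preceding lemmas, is induction on $m$ with base case \eqref{eq.djwcsym} (and \eqref{eq.wn9acmr} in the alternating case): dividing the left-hand side of \eqref{eq.x8nwnxn} written for $m-1$ by $z$ and integrating over $[0,z]$ reproduces termwise the left-hand side for $m$, since $\int_0^z t^{n-1}/n^{m-1}\,dt=z^n/n^m$ and $\int_0^z t^n/(n+1)^{m-1}\,dt=z^{n+1}/(n+1)^m$; on the right, the substitution $u=t^k$ together with $\Li_m(u)=\int_0^u\Li_{m-1}(v)\,dv/v$ turns $k^{-(m-1)}\int_0^z\Li_{m-1}(t^k)\,dt/t$ into $k^{-m}\Li_m(z^k)$. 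Either way, the only point requiring care is the interchange of limit and summation at $|z|=1$ — equivalently, the vanishing of the telescoping boundary term $\lfloor N/k\rfloor\,z^{N+1}/(N+1)^m$ — which is immediate for $m\ge2$ and is where the hypothesis on $z$ enters for the remaining values of $m$.
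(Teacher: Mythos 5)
Your proposal is correct, and your primary argument takes a genuinely different route from the paper. The paper proves the first identity by induction on $m$ with base case $m=1$ given by \eqref{eq.djwcsym}; the inductive step divides by $z$ and integrates term by term, using $\int_0^z \Li_r(x^k)\,\frac{dx}{x}=\frac{1}{k}\Li_{r+1}(z^k)$ --- exactly the alternative you sketch in your closing paragraph. Your main proof instead runs Abel summation against the elementary identity $\lfloor n/k\rfloor-\lfloor (n-1)/k\rfloor=1$ precisely when $k\mid n$ (and $0$ otherwise), which collapses the partial sums directly onto $\sum_{k\mid n} z^n/n^m=k^{-m}\sum_{j\ge 1}(z^k)^j/j^m$ plus the boundary term $\lfloor N/k\rfloor\,z^{N+1}/(N+1)^m$. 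This buys two things the paper's argument does not. First, it handles all integers $m$ in one stroke, whereas an induction anchored at $m=1$ and ascending only reaches $m\ge 1$, even though the lemma is stated for any integer $m$. Second, it isolates exactly where the hypothesis on $z$ enters: the boundary term vanishes for $m\ge 2$ when $|z|\le 1$ but forces $|z|<1$ for $m\le 1$, so your analysis also exposes that the lemma's blanket hypothesis $|z|\le 1$ is too generous for small $m$ (e.g., $\Li_0(1)$ diverges). The paper's route is shorter on the page but leans on the earlier lemma and on a term-by-term integration whose justification at $|z|=1$ is left implicit. Both arguments are sound where they apply; yours is the more self-contained and the more precise about convergence.
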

\begin{proof}
We proof \eqref{eq.x8nwnxn} by induction on $m$ for a fixed $k$. The identity is valid for $m=1$, corresponding to \eqref{eq.djwcsym}. Assume that it is valid for an integer $m=r$. We therefore have the hypothesis:
\[
P_r:\quad\sum_{n = 1}^\infty  {\left\lfloor {\frac{n}{k}} \right\rfloor \left( {\frac{{z^n }}{{n^r }} - \frac{{z^{n + 1} }}{{(n + 1)^r }}} \right)}  = \frac{1}{{k^r }}\Li_r (z^k ).
\]
We wish to prove that $P_r\implies P_{r + 1}$.

The $P_r$ identity is true for $z=0$. Dividing through by $z\ne0$ gives
\[
\sum_{n = 1}^\infty  {\left\lfloor {\frac{n}{k}} \right\rfloor \left( {\frac{{z^{n - 1} }}{{n^r }} - \frac{{z^n }}{{(n + 1)^r }}} \right)}  =   \frac{1}{{k^r }}\frac{{\Li_r (z^k )}}{z},
\]
which, upon integration with respect to $z$ from $0$ to $z$ yields
\[
P_{r + 1}:\quad\sum_{n = 1}^\infty  {\left\lfloor {\frac{n}{k}} \right\rfloor \left( {\frac{{z^n }}{{n^{r + 1} }} - \frac{{z^{n + 1} }}{{(n + 1)^{r + 1} }}} \right)}  = \frac{1}{{k^{r + 1} }}\Li_{r + 1} (z^k ),
\]
since
\begin{equation*}
\int_0^z \frac{{\Li_r (x^k )}}{x} dx = \int_0^z \sum_{n = 1}^\infty \frac{{x^{kn - 1} }}{{n^r }} dx
= \sum_{n = 1}^\infty \frac{1}{{n^r }} \int_0^z x^{kn - 1} dx = \frac{1}{k} \sum_{n = 1}^\infty \frac{{z^{kn} }}{{n^{r + 1} }} 
= \frac{{\Li_{r + 1} (z^k )}}{k}.
\end{equation*}
Thus, $P_r\implies P_{r + 1}$.
\end{proof}
\begin{remark}
Since $\Li_m(e^{2\pi i})=\zeta(m)$ from \eqref{eq.x8nwnxn} we have a representation of the Riemann zeta function as follows:
$$
\zeta(m) = k^m \sum_{n=1}^{\infty} \left\lfloor {\frac{n}{k}} \right\rfloor \frac{(n+1)^m - n^m}{n^m(n+1)^m}.	
$$
\end{remark}
\begin{lemma}
Let $H_j$ be the $j$-th harmonic number. Let $k$ be a positive integer. If $|z|<1$, then
\begin{equation}\label{eq.ydbk6v1}
\begin{split}
\sum_{n = 0}^\infty {\frac{{H_{\left\lfloor {n/k} \right\rfloor } z^{n + k} }}{{\left( {\left\lfloor {n/k} \right\rfloor  + 1} \right)^2 }}}  =\frac{1 - z^k }{1 - z}&\Bigl( \frac{k}{2}\log z\log ^2( {1 - z^k }) \\
&\quad { + \log( {1 - z^k } )\Li_2 ( {1 - z^k } ) - \Li_3 ( {1 - z^k } ) + \zeta (3)} \Bigr).
\end{split}
\end{equation}
\end{lemma}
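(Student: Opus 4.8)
The plan is to read the left-hand side of \eqref{eq.ydbk6v1} as $z^k$ times the ordinary generating function of the dual floor sequence of $a_n=H_n/(n+1)^2$, and to let Lemma~\ref{fund_lem} do the combinatorial part. Writing $F(z)=\sum_{n\ge0}\frac{H_n}{(n+1)^2}z^n$ (which converges for $|z|<1$), Lemma~\ref{fund_lem} gives
\[
\sum_{n=0}^\infty\frac{H_{\lfloor n/k\rfloor}}{(\lfloor n/k\rfloor+1)^2}\,z^n=\frac{1-z^k}{1-z}\,F(z^k),
\]
so multiplying by $z^k$ produces exactly the left-hand side of \eqref{eq.ydbk6v1} on the left and $\frac{1-z^k}{1-z}\,z^kF(z^k)$ on the right. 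Since $z^kF(z^k)=\sum_{n\ge0}\frac{H_n}{(n+1)^2}(z^k)^{n+1}=\sum_{j\ge1}\frac{H_{j-1}}{j^2}(z^k)^j$ after the shift $j=n+1$, the whole statement reduces to the single-variable evaluation
\[
\sum_{j=1}^\infty\frac{H_{j-1}}{j^2}\,w^j=\tfrac12\log w\,\log^2(1-w)+\log(1-w)\Li_2(1-w)-\Li_3(1-w)+\zeta(3),\qquad|w|<1,
\]
after which one substitutes $w=z^k$ (so that $\log w=k\log z$) and multiplies by $(1-z^k)/(1-z)$.

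To prove this single-variable identity I would first remove the shift by $H_{j-1}=H_j-1/j$, turning the left side into $S(w)-\Li_3(w)$ with $S(w)=\sum_{j\ge1}\frac{H_j}{j^2}w^j$, so the task becomes the classical Euler-sum generating function
\[
S(w)=\Li_3(w)-\Li_3(1-w)+\log(1-w)\Li_2(1-w)+\tfrac12\log w\,\log^2(1-w)+\zeta(3).
\]
This I would obtain by two successive integrations starting from $\sum_{j\ge1}H_jw^j=-\log(1-w)/(1-w)$: dividing by $w$ and integrating from $0$ to $w$ gives $\sum_{j\ge1}\frac{H_j}{j}w^j=\Li_2(w)+\tfrac12\log^2(1-w)$, and dividing by $w$ and integrating once more gives $S(w)=\Li_3(w)+\tfrac12\int_0^w\frac{\log^2(1-t)}{t}\,dt$. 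The remaining integral is evaluated by integration by parts with $dv=dt/t$, $v=\log t$, which yields $\int\frac{\log^2(1-t)}{t}\,dt=\log t\,\log^2(1-t)+2\int\frac{\log t\,\log(1-t)}{1-t}\,dt$, followed by a second integration by parts (after the substitution $s=1-t$) using $\int\frac{\log(1-s)}{s}\,ds=-\Li_2(s)$ and $\int\frac{\Li_2(s)}{s}\,ds=\Li_3(s)$; the lower limit $t\to0^+$ contributes $-2\Li_3(1)=-2\zeta(3)$.

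The main obstacle is precisely this evaluation of $\int_0^w\log^2(1-t)/t\,dt$: the two integrations by parts generate several $\log$--$\Li_2$--$\Li_3$ cross terms that have to be recombined correctly, and the behaviour at $t\to0^+$ (where $\log t\to-\infty$ while $\log^2(1-t)\to0$, so the product vanishes) together with the values $\Li_2(1)=\pi^2/6$ and $\Li_3(1)=\zeta(3)$ must be tracked carefully so that the free constant emerges as exactly $\zeta(3)$. A convenient consistency check is $w=1$: there $S(1)=\sum_{j\ge1}H_j/j^2=2\zeta(3)$ by Euler, and every polylogarithmic cross term on the right-hand side tends to $0$, leaving $\zeta(3)+\zeta(3)$; differentiating both sides reduces the identity to the already-established generating function for $\sum_{j\ge1}H_jw^j/j$, pinning it down up to a constant. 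Once the single-variable identity is in hand, the substitution $w=z^k$ and multiplication by $(1-z^k)/(1-z)$ give \eqref{eq.ydbk6v1}, valid for $|z|<1$ since then $|z^k|<1$.
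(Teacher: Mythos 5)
Your proposal is correct and follows essentially the same route as the paper: apply Lemma~\ref{fund_lem} with $a_n=H_n/(n+1)^2$, multiply by $z^k$, and invoke the closed form of $\sum_{n\ge0}H_nw^{n+1}/(n+1)^2$. The only difference is that the paper simply cites this last evaluation from Lewin's book, whereas you rederive it by iterated integration of $\sum_{j\ge1}H_jw^j=-\log(1-w)/(1-w)$ and the evaluation of $\int_0^w\log^2(1-t)\,t^{-1}\,dt$, which is a correct (and self-contained) way to supply the missing ingredient.
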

\begin{proof}
It is known that \cite[Formula (12), p.~303]{lewin81}:
\[
\sum_{n = 0}^\infty  {\frac{{H_n z^{n + 1} }}{{(n + 1)^2 }}} = {\frac{1}{2}} \log z\log ^2 (1 - z) + \log (1 - z)\Li_2 (1 - z) - \Li_3 (1 - z) + \zeta (3).
\]
Thus with $a_n=H_n/(n+1)^2$ in \eqref{gf_floor_id1}, the result follows.
\end{proof}
\begin{remark}
Since 
\begin{equation*}
\Li_2\Big(\frac12\Big)=\frac{\pi^2}{12}-\frac{\log^22}{2},\qquad \Li_3\Big(\frac12\Big)=\frac{\log^32}{6}-\frac{\pi^2\log2}{12}+\frac{7}{8}\zeta(3),
\end{equation*}
from \eqref{eq.ydbk6v1} we have the following representation of $\zeta(3)$:
\begin{equation*}
\zeta(3) = 4(2-\sqrt2) \sum_{n=1}^{\infty}
\frac{H_{\left\lfloor {\frac{n}{2}}\right\rfloor}}{2^{\frac{n}{2}}\left\lfloor {\frac{n+2}{2}} \right\rfloor^2} + \frac{4}{3}\log^3 2.	
\end{equation*}
\end{remark}
\begin{lemma}\label{lem6}
Let $k$ be a positive even integer and $|z|< 1$. Let $(a_n)_{n\geq 0}$ be an arbitrary sequence. Then
\begin{equation}\label{eq.tu4irvm}
(1 - z)\sum_{n = 0}^\infty a_{\left\lfloor {n/k} \right\rfloor } z^n 
= (1 + z)\sum_{n = 0}^\infty (- 1)^n a_{\left\lfloor {n/k} \right\rfloor } z^n.
\end{equation}
In particular, for all convergent sequences $(a_n)_{n\geq 0}$
\begin{equation*}
\sum_{n = 0}^\infty (-1)^n a_{\left\lfloor {n/k} \right\rfloor } = 0, \quad \mbox{$k$ even}.
\end{equation*}
\end{lemma}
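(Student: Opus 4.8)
\emph{Plan.} The first assertion is essentially a restatement of Lemma~\ref{fund_lem}. Writing $F(z)=\sum_{n\ge0}a_nz^n$ and using that $k$ even forces $(-1)^k=1$ and $(-1)^{k+1}=-1$, the two generating functions from Lemma~\ref{fund_lem} become
\[
\sum_{n=0}^\infty a_{\lfloor n/k\rfloor}z^n=\frac{1-z^k}{1-z}\,F(z^k),\qquad
\sum_{n=0}^\infty(-1)^na_{\lfloor n/k\rfloor}z^n=\frac{1-z^k}{1+z}\,F(z^k).
\]
Multiplying the first equation by $1-z$ and the second by $1+z$ yields $(1-z^k)F(z^k)$ in both cases, so equating the right-hand sides gives \eqref{eq.tu4irvm}. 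I would present this in two lines.

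For the ``in particular'' claim the plan is to let $z\to1^-$ in \eqref{eq.tu4irvm}. On the left, put $b_n=a_{\lfloor n/k\rfloor}$, so $b_n\to\ell:=\lim_n a_n$; the elementary identity $(1-z)\sum_{n\ge0}b_nz^n=\ell+(1-z)\sum_{n\ge0}(b_n-\ell)z^n$, together with the standard tail estimate ($|b_n-\ell|<\varepsilon$ for $n\ge N$), shows the left side of \eqref{eq.tu4irvm} tends to $\ell$. The decisive step is the right side, and there I would first record the \emph{block cancellation}: since $k$ is even, on each block $n\in\{jk,jk+1,\dots,jk+k-1\}$ one has $a_{\lfloor n/k\rfloor}=a_j$ while $\sum_{i=0}^{k-1}(-1)^{jk+i}=\sum_{i=0}^{k-1}(-1)^i=0$, so the block contributes $0$. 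Hence every partial sum of $\sum(-1)^na_{\lfloor n/k\rfloor}$ taken up to a multiple of $k$ vanishes, and a partial sum up to $Nk+r$ with $0\le r<k$ equals $a_N\sum_{i=0}^{r}(-1)^i$, hence is bounded by $|a_N|$. Consequently the series converges if and only if $a_n\to0$, and in that case its sum is $0$; this already proves the statement directly (and shows the hypothesis must be read as $a_n\to0$, since for $\lim a_n\ne0$ the series simply diverges). Equivalently, once convergence is known one may invoke Abel's theorem: $\lim_{z\to1^-}\sum(-1)^na_{\lfloor n/k\rfloor}z^n$ equals the sum, the right side of \eqref{eq.tu4irvm} tends to twice that sum, and matching with the left side ($=\ell=0$) gives $0$.

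The main obstacle is not computational: identity \eqref{eq.tu4irvm} is immediate from Lemma~\ref{fund_lem}, and the only genuine content is the block‑cancellation observation—over $k$ consecutive integers with $k$ even the signs $+1,-1$ occur equally often—plus the remark that this already forces $(a_n)$ to be null for the alternating series to converge at all. The remaining Abel/tail estimates are routine.
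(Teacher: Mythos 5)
Your derivation of \eqref{eq.tu4irvm} is correct and essentially the paper's argument: the paper observes that $h(z,k)=(1-z)F^+(z,k)=(1-z^k)F(z^k)$ is an even function of $z$ when $k$ is even, which is exactly your ``multiply each formula from Lemma~\ref{fund_lem} by $1\mp z$ and compare'' step in different clothing. For the ``in particular'' claim the paper gives no argument at all, so your block-cancellation proof is a genuine addition, and it is right: over each block $n\in\{jk,\dots,jk+k-1\}$ the term $a_j$ is multiplied by $\sum_{i=0}^{k-1}(-1)^{jk+i}=0$, the partial sums up to $Nk+r$ are bounded by $|a_N|$, and hence the alternating series converges (to $0$) precisely when $a_n\to0$. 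Your side remark is also worth keeping: if one only assumes $a_n\to\ell$ and interprets the sum in the Abel sense, then letting $z\to1^-$ in \eqref{eq.tu4irvm} gives $(1+z)\sum(-1)^na_{\lfloor n/k\rfloor}z^n\to\ell$, i.e., an Abel value of $\ell/2$ rather than $0$; so the lemma's conclusion as stated really does require reading ``convergent'' as ``null'' (or restricting to $\ell=0$), a point the paper glosses over. No gaps.
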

\begin{proof}
Let $h(z,k)=(1-z)F^+(z,k)$. Clearly, from \eqref{gf_floor_id1}, $h(-z,k)=h(z,k)$ if $k$ is an even integer; and hence~\eqref{eq.tu4irvm}.
\end{proof}
\begin{example} 
Lemma \ref{lem6} yields for $k\geq1$:
\begin{gather*}
\sum_{n = 1}^\infty \frac{1-3(-1)^n	}{2^n} F_{\lfloor	\frac{n}{2k}\rfloor} = 0, \qquad 
\sum_{n = 1}^\infty \frac{1-3(-1)^n	}{2^n} L_{\lfloor \frac{n}{2k}\rfloor} = 0,\\ 
\sum_{n = 1}^\infty \frac{1-2(-1)^n	}{3^n} \left\lfloor \frac{n}{2k}\right\rfloor F_{\lfloor \frac{n}{2k}\rfloor} = 0, \qquad 
\sum_{n = 1}^\infty \frac{1-2(-1)^n	}{3^n} \left\lfloor \frac{n}{2k}\right\rfloor L_{\lfloor \frac{n}{2k}\rfloor} = 0,\\ 
\sum_{n = 1}^\infty \frac{3-5(-1)^n	}{4^n} F_{k\lfloor \frac{n}{2k}\rfloor} = 0, \qquad 
\sum_{n = 1}^\infty \frac{3-5(-1)^n	}{4^n} L_{k\lfloor \frac{n}{2k}\rfloor} = 0. 
\end{gather*}
\end{example}
\begin{theorem}\label{th10}
If $k$ is a positive even integer, $m$ any integer, $p\ge2$ and $(a_n)_{n\geq 0}$ an arbitrary sequence, then
\begin{align}\label{eq.gjrt7dj}
\sum_{n = 0}^\infty  a_{\left\lfloor {n/k} \right\rfloor } \frac{p{L_{n + m } - L_{n+m+1} }}{p^{n}}  
&= \sum_{n = 0}^\infty  (-1)^n a_{\left\lfloor {n/k} \right\rfloor} \frac{pL_{n + m } + L_{n+m+1} }{p^{n}},\\
\label{eq.iorv3sv}
\sum_{n = 0}^\infty  a_{\left\lfloor {n/k} \right\rfloor } \frac{p{F_{n + m } - F_{n+m+1} }}{p^{n}}  
&= \sum_{n = 0}^\infty  (-1)^n a_{\left\lfloor {n/k} \right\rfloor} \frac{pF_{n + m } + F_{n+m+1} }{p^{n}}.
\end{align}
\end{theorem}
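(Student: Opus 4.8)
The plan is to feed the two Binet roots $\alpha$ and $\beta$ into the functional identity \eqref{eq.tu4irvm} of Lemma \ref{lem6} and then recombine via the Binet forms \eqref{binet}. First I would observe that since $p\ge 2$ we have $|\alpha/p|\le \alpha/2<1$ and $|\beta/p|\le|\beta|/2<1$, so $z=\alpha/p$ and $z=\beta/p$ are admissible values in \eqref{eq.tu4irvm}. Setting $z=\alpha/p$ there, multiplying both sides by $p\,\alpha^m$, and using the elementary identity $(p-\alpha)\alpha^{n+m}=p\,\alpha^{n+m}-\alpha^{n+m+1}$, gives
\begin{equation*}
\sum_{n=0}^\infty a_{\left\lfloor n/k\right\rfloor}\,\frac{p\,\alpha^{n+m}-\alpha^{n+m+1}}{p^{n}}
=\sum_{n=0}^\infty (-1)^n a_{\left\lfloor n/k\right\rfloor}\,\frac{p\,\alpha^{n+m}+\alpha^{n+m+1}}{p^{n}} ,
\end{equation*}
and likewise with $\alpha$ replaced by $\beta$ throughout.

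Next I would add the $\alpha$-identity to the $\beta$-identity. On each side the numerators combine as $p(\alpha^{n+m}+\beta^{n+m})\mp(\alpha^{n+m+1}+\beta^{n+m+1})=pL_{n+m}\mp L_{n+m+1}$ by \eqref{binet}, which is exactly \eqref{eq.gjrt7dj}. For \eqref{eq.iorv3sv} I would instead subtract the $\beta$-identity from the $\alpha$-identity and divide through by $\alpha-\beta=\sqrt5$; the numerators then become $p\,\frac{\alpha^{n+m}-\beta^{n+m}}{\alpha-\beta}\mp\frac{\alpha^{n+m+1}-\beta^{n+m+1}}{\alpha-\beta}=pF_{n+m}\mp F_{n+m+1}$, again by \eqref{binet}. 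One should also remark that $k$ being a \emph{positive even} integer is used only through Lemma \ref{lem6}, and that the rearrangements are legitimate because all four series converge absolutely (the Lucas/Fibonacci factors grow like $\alpha^{n}$, which is dominated by $p^{n}$ for $p\ge2$, assuming as throughout that $(a_n)$ is such that the generating function in \eqref{eq.tu4irvm} converges at $z=\alpha/p$).

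I do not expect a genuine obstacle here: once \eqref{eq.tu4irvm} is in hand the argument is the same ``plug in $\alpha/p$ and $\beta/p$, then add and subtract'' device already used in Theorems \ref{thm_1} and \ref{th8}. The only point requiring a line of justification is the interchange of summation and the linear combination of the two series, i.e.\ absolute convergence for $p\ge 2$; everything else is the routine bookkeeping of the Binet identities $\alpha+\beta=1$, $\alpha\beta=-1$, $L_j=\alpha^j+\beta^j$, $\sqrt5\,F_j=\alpha^j-\beta^j$.
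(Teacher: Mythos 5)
Your proposal is correct and follows the paper's own proof essentially verbatim: substitute $z=\alpha/p$ and $z=\beta/p$ into \eqref{eq.tu4irvm}, clear denominators, then add and subtract and apply the Binet forms. Your added remark on absolute convergence is a reasonable (and arguably needed) refinement of the paper's blanket ``arbitrary sequence'' hypothesis, but it does not change the argument.
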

\begin{proof}
Setting $z=\alpha/p$ and $z=\beta/p$, respectively, in \eqref{eq.tu4irvm} gives
\[
(p-\alpha)\sum_{n = 0}^\infty {a_{\left\lfloor {n/k} \right\rfloor } \frac{{\alpha ^{n + m} }}{p^n}}  
= (p+\alpha) \sum_{n = 0}^\infty  {( - 1)^n a_{\left\lfloor {n/k} \right\rfloor } \frac{{\alpha ^{n + m}  }}{p^n }}
\]
and
\[
(p-\beta)\sum_{n = 0}^\infty {a_{\left\lfloor {n/k} \right\rfloor } \frac{{\beta ^{n + m} }}{p^n}}  
= (p+\beta) \sum_{n = 0}^\infty  {( - 1)^n a_{\left\lfloor {n/k} \right\rfloor } \frac{{\beta^{n + m}  }}{p^n }}
\]
where $m$ is an arbitrary integer. Respective addition and subtraction of these two identities produce \eqref{eq.gjrt7dj} 
and \eqref{eq.iorv3sv}.
\end{proof}
\begin{example} 
For $p=2$ and $p=3$ in  Theorem \ref{th10} we have:
\begin{gather*}
\sum_{n = 0}^\infty \frac{a_{\left\lfloor {n/k} \right\rfloor }}{2^n}\big(L_{n + m - 2} - 5(-1)^nF_{n + m + 1}\big) = 0,\\
\sum_{n = 0}^\infty \frac {a_{\left\lfloor {n/k} \right\rfloor }}{2^n}\big( F_{n + m - 2} - ( - 1)^n L_{n + m + 1}\big) = 0,
\end{gather*}
\begin{gather*}
\sum_{n = 0}^\infty  \frac{a_{\left\lfloor {n/k} \right\rfloor }}{3^n}\big(L_{n + m - 1} - ( - 1)^n (F_{n+m} + L_{n + m + 1})\big) = 0,\\
\sum_{n = 0}^\infty  \frac{a_{\left\lfloor {n/k} \right\rfloor }}{3^n}\big(5F_{n + m - 1} - ( - 1)^n (L_{n+m} + 5F_{n + m + 1})\big) = 0.
\end{gather*}
\end{example}
\begin{theorem}
If $k$ is a positive even integer, $m$ is an even integer, $r$ any integer and $(a_n)_{n\geq 0}$ an arbitrary sequence, then
\begin{align*}
\sum_{n = 1}^\infty &\frac{a_{\left\lfloor {n/k} \right\rfloor }}{L_m^{n}}\big(L_{mn + r - m} -(-1)^n(L_{mn + r} L_m + L_{m(n + 1) + r}) \big)=0,\\
\sum_{n = 1}^\infty &\frac{a_{\left\lfloor {n/k} \right\rfloor }}{L_m^{n}} \big(F_{mn + r - m}- ( - 1)^n ( F_{mn + r} L_m + F_{m(n + 1) + r})\big)=0.
\end{align*}
\end{theorem}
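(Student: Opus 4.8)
The plan is to specialize the master identity \eqref{eq.tu4irvm} of Lemma~\ref{lem6} to the two values $z=\alpha^m/L_m$ and $z=\beta^m/L_m$ and then recombine via the Binet formulas \eqref{binet}; this theorem is the Lucas-base counterpart of Theorem~\ref{th10}, produced from \eqref{eq.tu4irvm} in precisely the way Theorem~\ref{th8} was produced from \eqref{eq.djwcsym}. The first thing to check is that these substitutions are legitimate, i.e.\ that both values lie in the disc $|z|<1$: since $m$ is even we have $\alpha^m>0$ and $\beta^m>0$, so $L_m=\alpha^m+\beta^m$ strictly exceeds each of $\alpha^m$ and $\beta^m$, giving $|\alpha^m/L_m|<1$ and $|\beta^m/L_m|<1$. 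This is exactly where the evenness of $m$ enters. The other hypothesis of Lemma~\ref{lem6}, that $k$ be a positive even integer, is assumed outright, and the rearrangements below are valid whenever the series in question converge, which is the implicit standing assumption here just as in Theorem~\ref{th10}.

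Taking $z=\alpha^m/L_m$, one has $1-z=(L_m-\alpha^m)/L_m=\beta^m/L_m$ and $1+z=(L_m+\alpha^m)/L_m$, so multiplying \eqref{eq.tu4irvm} through by $L_m\alpha^r$ gives
\[
\sum_{n=0}^\infty a_{\lfloor n/k\rfloor}\,\frac{\beta^m\alpha^{mn+r}}{L_m^{\,n}}
=\sum_{n=0}^\infty(-1)^n a_{\lfloor n/k\rfloor}\,\frac{(L_m+\alpha^m)\alpha^{mn+r}}{L_m^{\,n}}.
\]
Since $m$ is even, $\alpha^m\beta^m=(\alpha\beta)^m=1$, whence $\beta^m\alpha^{mn+r}=\alpha^{mn+r-m}$, while $(L_m+\alpha^m)\alpha^{mn+r}=L_m\alpha^{mn+r}+\alpha^{m(n+1)+r}$. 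The same computation with $\beta$ in place of $\alpha$ (now $1-z=\alpha^m/L_m$, and one multiplies by $L_m\beta^r$) yields the companion relation.

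It remains to add the two relations and use $\alpha^{j}+\beta^{j}=L_j$ term by term to get the Lucas identity, and to subtract them and use $(\alpha^{j}-\beta^{j})/(\alpha-\beta)=F_j$ term by term to get the Fibonacci identity, collecting all terms on one side in each case. I do not expect a real obstacle: the only delicate point is the admissibility check of the first paragraph, and the rest is exponent bookkeeping (tracking the shift $mn+r\mapsto m(n+1)+r$ in the regrouped sums) plus the interchange of two absolutely convergent series. Should one wish to isolate the $n=0$ term of either sum, it can be evaluated with the product formulas $L_mL_r=L_{m+r}+(-1)^mL_{m-r}$ and $L_mF_r=F_{m+r}+(-1)^mF_{m-r}$, which simplify because $(-1)^m=1$.
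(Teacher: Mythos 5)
Your proposal is correct and is essentially identical to the paper's own proof, which consists of the single instruction to set $z=\alpha^m/L_m$ and $z=\beta^m/L_m$ in \eqref{eq.tu4irvm} and recombine via the Binet forms; you simply carry this out in detail, including the admissibility check $|\alpha^m/L_m|,|\beta^m/L_m|<1$ (where the evenness of $m$ enters), which the paper leaves implicit. One caveat worth pressing on: the substitution naturally produces the identity with the sum starting at $n=0$, and the $n=0$ term evaluates (via $L_rL_m=L_{r+m}+L_{r-m}$ and $F_rL_m=F_{r+m}+F_{r-m}$ for even $m$) to $-2a_0L_{m+r}$ and $-2a_0F_{m+r}$ respectively, which are not zero in general — so the lower limit $n=1$ in the theorem as printed appears to be a misprint for $n=0$ (consistent with Theorem~\ref{th10}), and your closing remark about isolating the $n=0$ term is exactly the computation that exposes this.
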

\begin{proof}
Set $z=\alpha^m/L_m$ and $z=\beta^m/L_m$, in turn, in \eqref{eq.tu4irvm} and proceed as in the proof of Theorem \ref{thm_2}.
\end{proof}
\begin{theorem}
If $k$ is a positive even integer, $m$ is an odd integer, $r$ is any integer and $(a_n)_{n\geq 0}$ an arbitrary sequence, then
\begin{align*}
\sum_{n = 1}^\infty \frac{ a_{\left\lfloor {2n/k} \right\rfloor } F_{2mn+r+m} - a_{\left\lfloor {(2n-1)/k} \right\rfloor }
F_m\big(L_{2m(n-1) + r} + L_{2mn+r}\big)}{{5^nF_m^{2n}}} &= 0,\\
\sum_{n = 1}^\infty \frac{ a_{\left\lfloor {2n/k} \right\rfloor } L_{2mn+r+m}- 5  a_{\left\lfloor {(2n-1)/k} \right\rfloor }
F_m(F_{2m(n-1) + r} + F_{2mn+r})}{{5^n F_m^{2n}}} &= 0.
\end{align*}
\end{theorem}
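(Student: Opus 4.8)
The plan is to specialize the functional equation \eqref{eq.tu4irvm} of Lemma \ref{lem6} at a suitable pair of Binet points, exactly as in the proofs of the two preceding theorems, but now first splitting \eqref{eq.tu4irvm} into its even- and odd-indexed parts so that the two floor-indices $\lfloor 2n/k\rfloor$ and $\lfloor(2n-1)/k\rfloor$ emerge. Writing $A(z)=\sum_{n\ge0}a_{\lfloor n/k\rfloor}z^n=A_{e}(z)+A_{o}(z)$ for the decomposition into even and odd parts, the identity of Lemma \ref{lem6} (which holds because $k$ is even) says $(1-z)A(z)=(1+z)\bigl(A_{e}(z)-A_{o}(z)\bigr)$, and this collapses to $A_{o}(z)=z\,A_{e}(z)$, that is,
\[
\sum_{n\ge1}a_{\lfloor(2n-1)/k\rfloor}\,z^{2n-1}=\sum_{n\ge0}a_{\lfloor 2n/k\rfloor}\,z^{2n+1},\qquad |z|<1.
\]

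Since $m$ is odd, $\alpha^{m}$ and $\beta^{m}$ are the roots of $t^{2}-L_{m}t-1=0$, so $\alpha^{m}\beta^{m}=-1$, $\alpha^{m}+\beta^{m}=L_{m}$ and $\alpha^{m}-\beta^{m}=\sqrt5\,F_{m}$; moreover $\sqrt5\,F_{m}=\alpha^{m}+|\beta^{m}|$ strictly exceeds each of $|\alpha^{m}|$ and $|\beta^{m}|$ for every odd $m$ (positive or negative), so both $z=\alpha^{m}/(\sqrt5\,F_{m})$ and $z=\beta^{m}/(\sqrt5\,F_{m})$ lie in the open unit disc and may be substituted into the displayed relation. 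Doing so, multiplying through by $\sqrt5\,F_{m}\cdot\alpha^{r}$, and using $(\sqrt5\,F_{m})^{2}=5F_{m}^{2}$, I obtain
\[
\sum_{n\ge0}a_{\lfloor 2n/k\rfloor}\,\frac{\alpha^{2mn+m+r}}{5^{n}F_{m}^{2n}}=5F_{m}^{2}\sum_{n\ge1}a_{\lfloor(2n-1)/k\rfloor}\,\frac{\alpha^{2mn-m+r}}{5^{n}F_{m}^{2n}},
\]
together with the companion identity in which $\alpha$ is replaced by $\beta$.

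Subtracting the $\beta$-identity from the $\alpha$-identity and dividing by $\alpha-\beta=\sqrt5$ turns the powers into Fibonacci numbers; the right-hand coefficient is then simplified by $5F_{m}F_{2mn-m+r}=L_{2mn+r}+L_{2m(n-1)+r}$ (the product-to-sum rule $5F_{a}F_{b}=L_{a+b}-(-1)^{b}L_{a-b}$ with $b=m$ odd), which produces the first claimed identity. Adding the two identities instead, and using the companion rule $L_{a}F_{b}=F_{a+b}-(-1)^{b}F_{a-b}$ — so $F_{m}L_{2mn-m+r}=F_{2mn+r}+F_{2m(n-1)+r}$ — produces the second.

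The step I expect to require the most care is the bookkeeping at the left endpoint: the $n=0$ term of the even sum, namely $a_{0}F_{m+r}$ (respectively $a_{0}L_{m+r}$), must be split off and matched against the remaining part, and here one uses the fact that for even $k$ one has $\lfloor(2n-1)/k\rfloor=\lfloor 2(n-1)/k\rfloor$ (an even $k$ cannot divide the odd number $2n-1$), which is exactly what makes the two sides of the displayed relation agree coefficientwise and the resulting tail sum collapse. One should also verify that $(a_{n})$ decays fast enough relative to $\bigl(\alpha^{m}/(\sqrt5\,F_{m})\bigr)^{2n}\to0$ so that the substitution, the termwise $\alpha$--$\beta$ separation, and the passage to the limit are all legitimate; granting this, everything reduces to the two product-to-sum identities recorded above.
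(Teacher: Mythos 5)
Your route is the same as the paper's: substitute $z=\alpha^m/(\sqrt5\,F_m)$ and $z=\beta^m/(\sqrt5\,F_m)$ into the functional equation \eqref{eq.tu4irvm} of Lemma \ref{lem6} and recombine via the Binet forms. The reformulation $(1-z)A(z)=(1+z)\bigl(A_e(z)-A_o(z)\bigr)\iff A_o(z)=zA_e(z)$ is a clean way to organize this, the check that both substitution points lie in the open unit disc is correct for every odd $m$, and the two product-to-sum identities $5F_{2mn-m+r}F_m=L_{2mn+r}+L_{2m(n-1)+r}$ and $F_mL_{2mn-m+r}=F_{2mn+r}+F_{2m(n-1)+r}$ (both using that $m$ is odd) are exactly what converts the $\alpha$--$\beta$ combinations into the stated Fibonacci and Lucas forms.

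The endpoint issue you flag but do not close is the real one, and it does not close. Your displayed relation has the even-indexed sum running over $n\ge 0$, while the theorem's first sum runs over $n\ge 1$; the difference is the single term $a_0F_{m+r}$ (resp.\ $a_0L_{m+r}$), and it cannot be ``matched against the remaining part.'' Indeed, reindexing the right-hand side via $\lfloor(2n-1)/k\rfloor=\lfloor 2(n-1)/k\rfloor$ shows that it equals the even sum over $n\ge 0$ \emph{termwise}, so the printed identity is equivalent to the assertion $a_0F_{m+r}=0$. Concretely, for $k=2$, $m=1$, $r=0$, $a_n\equiv 1$, the first printed sum is $\sum_{n\ge1}\bigl(F_{2n+1}-5F_{2n-1}\bigr)/5^n=-F_1=-1\ne 0$. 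So your method proves the correct statement --- with the first sum starting at $n=0$, equivalently with $a_0F_{m+r}$ (resp.\ $a_0L_{m+r}$) added to the left-hand side --- and the discrepancy is a defect of the printed theorem rather than of your argument; but your claim that the bookkeeping makes ``the resulting tail sum collapse'' to the stated form is the one step that does not go through, and you should instead state the corrected identity explicitly.
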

\begin{proof}
Set $z=\alpha^m/(\sqrt5F_m)$ and $z=\beta^m/(\sqrt5F_m)$, in turn, in \eqref{eq.tu4irvm} and proceed as in the proof of Theorem \ref{thm_2}.
\end{proof}

\section{The binomial transform of $(a_{\lfloor n/k \rfloor})_{n\geq 0}$}

We begin with recalling some basic facts. Let $(a_n)_{n\geq 0}$ be an arbitrary sequence of numbers
with ordinary generating function $F(z)$. Let further $(s_n)_{n\geq 0}$ be the generalized binomial transform of 
$(a_n)_{n\geq 0}$, i.e., 
\begin{equation*}
s_n = \sum_{j=0}^n \binom {n}{j} b^{n-j} c^j a_j 
\end{equation*}
with $b$ and $c$ being nonzero real numbers. Then the ordinary generating function of the sequence $(s_n)_{n\geq 0}$ is given 
by \cite{Boyadzhiev,Prod}
\begin{equation*}
S(z) = \frac{1}{1-bz} F\left ( \frac{cz}{1-bz}\right ).
\end{equation*}
Therefore, the ordinary generating function of the sequence
\begin{equation*}
s_{n}^* = \sum_{j=0}^n \binom {n}{j} b^{n-j} c^j a_{\lfloor j/k \rfloor}
\end{equation*}
equals
\begin{align}\label{bin_gen_fkt}
S^* (z) & = \frac{1}{1-bz} \frac{ 1-\left (\frac{cz}{1-bz}\right )^k }{1-\frac{cz}{1-bz}}\, 
F\!\left ( \Bigl (\frac{cz}{1-bz}\Bigr )^k  \right ) = \frac{ 1-\left (\frac{cz}{1-bz}\right )^k }{1-(b+c)z} \,F\!\left ( \Bigl(\frac{cz}{1-bz}\Bigr )^k  \right ).
\end{align}

In general, such a function will be a complicated expression. Focusing on the sequence $a_{n}=n$ we have
\begin{equation*}
S^* (z,k) = \frac{ 1-\left (\frac{cz}{1-bz}\right )^k }{1-(b+c)z} 
\frac{\left ( \frac{cz}{1-bz}\right )^k}{\left ( 1 - \left ( \frac{cz}{1-bz}\right )^k \right )^2} 
= \frac{(cz)^k}{(1-(b+c)z)((1-bz)^k - (cz)^k)}.
\end{equation*}

The complex function $P(z) = (1-bz)^k - (cz)^k$ is a polynomial of degree $k$ for $b\neq \pm c$ and we have
$$P(z) = (z-r_1)(z-r_2)\cdots (z-r_k),$$ 
where $r_i\in\mathbb{C}$, $i=1,\ldots,k,$ are the roots of $P(z)$. The partial fraction decomposition yields
\begin{equation*}
\frac{1}{P(z)} = \sum_{i=1}^k\frac{1}{z-r_i}  \prod_{j=1,\\ \,j\neq i}^k \frac{1}{r_j - r_i}.
\end{equation*}
This shows that (at least theoretically) the function $S^*(z,k)$ can be expressed as a convolution. But even for small values of $k$ this will be a challenging issue. Our first result is therefore concerned with 
analyzing the case $k=2$.
\begin{theorem}\label{thm_bin}
For any numbers $b$ and $c$ with $c\ne0$ and $n\geq 1$, we have
\begin{equation}\label{main_bin_id}
{\sum_{j=1}^n} \binom {n}{j}\Big\lfloor \frac{j}{2} \Big\rfloor b^{n-j} c^j  =
\frac{cn}{2}(b+c)^{n-1} - \frac{ (b+c)^n - (b-c)^n }{4}.
\end{equation}
If $b=c$, then this identity becomes
\begin{equation*}
{\sum_{j=1}^n} \binom {n}{j} \Big\lfloor \frac{j}{2} \Big\rfloor = 2^{n-2} (n-1).
\end{equation*}
\end{theorem}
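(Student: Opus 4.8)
The plan is to read the result straight off the generating function machinery already set up. Specializing \eqref{bin_gen_fkt} to $a_n=n$ and $k=2$ gives, as recorded just after \eqref{bin_gen_fkt},
\[
S^*(z,2)=\sum_{n=0}^\infty \Big(\textstyle\sum_{j=0}^n \binom nj \lfloor j/2\rfloor\, b^{n-j}c^j\Big) z^n=\frac{(cz)^2}{\bigl(1-(b+c)z\bigr)\bigl((1-bz)^2-(cz)^2\bigr)},
\]
and since $\lfloor 0/2\rfloor=0$ the $j=0$ term is absent, so the coefficient of $z^n$ is precisely the left-hand side of \eqref{main_bin_id}. First I would factor the denominator via $(1-bz)^2-(cz)^2=\bigl(1-(b+c)z\bigr)\bigl(1-(b-c)z\bigr)$, obtaining the clean form $S^*(z,2)=c^2z^2/\bigl[(1-(b+c)z)^2(1-(b-c)z)\bigr]$.

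Next, assuming for the moment $b\neq\pm c$ (so $b+c$ and $b-c$ are distinct and nonzero), I would perform the partial fraction decomposition
\[
\frac{1}{\bigl(1-(b+c)z\bigr)^2\bigl(1-(b-c)z\bigr)}=\frac{A}{1-(b+c)z}+\frac{B}{\bigl(1-(b+c)z\bigr)^2}+\frac{C}{1-(b-c)z},
\]
with $A,B,C$ found by cover-up evaluations at the poles and by matching the $z^2$-coefficient; one gets $B=(b+c)/(2c)$, $C=(b-c)^2/(4c^2)$, $A=-(b^2-c^2)/(4c^2)$. Then I would extract the coefficient of $z^n$ for $n\ge2$ using $[z^m](1-uz)^{-1}=u^m$ and $[z^m](1-uz)^{-2}=(m+1)u^m$, multiply back by $c^2z^2$, and collect terms:
\[
s_n^*=c^2A(b+c)^{n-2}+c^2B(n-1)(b+c)^{n-2}+c^2C(b-c)^{n-2}.
\]
Substituting the values of $A,B,C$ and simplifying — the decisive cancellation being $-\tfrac{b-c}{4}-\tfrac c2=-\tfrac{b+c}{4}$ — collapses this to $\tfrac{cn}{2}(b+c)^{n-1}-\tfrac14\bigl((b+c)^n-(b-c)^n\bigr)$, which is the right-hand side of \eqref{main_bin_id}. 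The case $n=1$ is checked directly, both sides being $0$.

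Finally, to drop the hypothesis $b\neq\pm c$ and deduce the $b=c$ statement, I would note that both sides of \eqref{main_bin_id} are polynomials in $b$ and $c$, so an identity valid on the Zariski-dense set $\{b\neq\pm c\}$ holds for all $b,c$; setting $b=c$ gives $c^n\sum_{j=1}^n\binom nj\lfloor j/2\rfloor=(n-1)2^{n-2}c^n$, and dividing by $c^n$ yields $\sum_{j=1}^n\binom nj\lfloor j/2\rfloor=2^{n-2}(n-1)$. (One can also avoid the density argument here: when $b=c$ the factor $1-(b-c)z$ equals $1$, so $S^*(z,2)=c^2z^2/(1-2cz)^2$ and the coefficient of $z^n$ is immediately $c^2(n-1)(2c)^{n-2}$.) I expect the main obstacle to be purely the bookkeeping — pinning down the partial-fraction constants and pushing the final simplification through without sign slips — together with the need to treat $b=\pm c$ separately, since the double-plus-simple pole structure underlying the partial fractions is only valid away from those values.
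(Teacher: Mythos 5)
Your proposal is correct and follows essentially the same route as the paper: both extract the coefficient of $z^n$ from $S^*(z,2)=c^2z^2/\bigl[(1-(b+c)z)^2(1-(b-c)z)\bigr]$ via partial fractions. The only difference is cosmetic — you use the full three-term decomposition with the double pole and read off coefficients directly (your $A,B,C$ and the final cancellation $-\tfrac{b-c}{4}-\tfrac{c}{2}=-\tfrac{b+c}{4}$ check out), whereas the paper decomposes only the quadratic factor and finishes with a Cauchy product plus a geometric series; your treatment of the degenerate cases $b=\pm c$ is, if anything, cleaner than the paper's.
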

\begin{proof}
If $k=2$ we simplify $S^*(z,2)$ and get
\begin{equation*}
S^*(z,2) = \frac{(cz)^2}{(1-(b+c)z)(1 - 2bz + (b^2-c^2)z^2)}.
\end{equation*}
Obviously, near zero, there exists the power series
\begin{equation*}
\frac{1}{1-(b+c)z} = \sum_{n=0}^\infty (b+c)^n z^n.
\end{equation*}
Also, from the partial fraction decomposition
\begin{equation*}
\frac{1}{1 - 2bz + (b^2-c^2)z^2} = \frac{b+c}{2c(1-(b+c)z)} - \frac{b-c}{2c(1-(b-c)z)}
\end{equation*}
we get (again near zero)
\begin{equation*}
\frac{1}{1 - 2bz + (b^2-c^2)z^2} = \frac{1}{2c} \sum_{n=0}^\infty \big((b+c)^{n+1} - (b-c)^{n+1} \big )z^n.
\end{equation*}
Hence, applying Cauchy's product formula for power series   
\begin{equation*}
S^*(z,2) = \frac{c}{2} \sum_{n=2}^\infty \sum_{j=0}^{n-2} (b+c)^j \big((b+c)^{n-1-j} - (b-c)^{n-1-j} \big) z^n
\end{equation*}
or, for $n\geq 2$,
\begin{align*}
{\sum_{j=1}^n} \binom {n}{j}  \Big\lfloor \frac{j}{2} \Big\rfloor b^{n-j} c^j& = 
\frac{c}{2} \sum_{j=0}^{n-2} (b+c)^j \left ((b+c)^{n-1-j} - (b-c)^{n-1-j} \right ) \\
& =  \frac{c}{2} (b+c)^{n-1} (n-1) - \frac{c}{2} (b-c)^{n-1} \sum_{j=0}^{n-2} \left (\frac{b+c}{b-c} \right )^j.
\end{align*}
The statement now follows from simplifying making use of the geometric series.
\end{proof}
\begin{corollary}
For $n\geq 2$ we have
\begin{align*}
{\sum_{j=1}^n} (-1)^{j} \binom {n}{j}  \Big\lfloor \frac{j}{2} \Big\rfloor = 2^{n-2},\qquad
{\sum_{j=1}^n} \binom {n}{j} 2^{j} \Big\lfloor \frac{j}{2} \Big\rfloor = n 3^{n-1} - \frac{ 3^n - (-1)^{n}}{4}.
\end{align*}
\end{corollary}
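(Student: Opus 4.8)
The plan is to obtain both identities as immediate specializations of Theorem~\ref{thm_bin}, that is, of the master identity \eqref{main_bin_id}. For the first sum I would put $b=1$ and $c=-1$; for the second, $b=1$ and $c=2$. In each case the left-hand side of \eqref{main_bin_id} collapses to the desired alternating or weighted binomial sum because $b^{n-j}c^j$ reduces to $(-1)^j$ or $2^j$, respectively, and the right-hand side is evaluated directly once $b+c$ and $b-c$ are computed.

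Carrying this out: with $b=1$, $c=-1$ one has $b^{n-j}c^j=(-1)^j$, so the left side of \eqref{main_bin_id} is $\sum_{j=1}^n(-1)^j\binom nj\lfloor j/2\rfloor$. Since $b+c=0$ and $b-c=2$, the first term on the right, $\frac{cn}{2}(b+c)^{n-1}=-\frac n2\,0^{\,n-1}$, vanishes for $n\ge 2$, while $\frac{(b+c)^n-(b-c)^n}{4}=\frac{0-2^n}{4}=-2^{n-2}$; hence the right side equals $0-(-2^{n-2})=2^{n-2}$, which is the first claimed identity. For the second, take $b=1$, $c=2$, so that $b^{n-j}c^j=2^j$ and the left side of \eqref{main_bin_id} becomes $\sum_{j=1}^n\binom nj 2^j\lfloor j/2\rfloor$. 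Now $b+c=3$ and $b-c=-1$, so $\frac{cn}{2}(b+c)^{n-1}=n\,3^{n-1}$ and $\frac{(b+c)^n-(b-c)^n}{4}=\frac{3^n-(-1)^n}{4}$, which gives exactly $n3^{n-1}-\frac{3^n-(-1)^n}{4}$.

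There is essentially no obstacle here; the only point requiring care is the degenerate case $b+c=0$ arising in the first substitution. The factor $(b+c)^{n-1}=0^{\,n-1}$ is $0$ precisely because $n\ge 2$ (for $n=1$ it would be $0^0=1$), and likewise $(b+c)^n=0^n=0$; this is exactly the reason the corollary is restricted to $n\ge 2$. Once this is noted, both evaluations are routine arithmetic.
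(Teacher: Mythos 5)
Your proof is correct and is exactly the intended argument: the corollary is a direct specialization of Theorem~\ref{thm_bin} with $(b,c)=(1,-1)$ and $(b,c)=(1,2)$, and your arithmetic checks out. Your remark about the degenerate case $b+c=0$ (so that $0^{n-1}=0$ only for $n\ge 2$) correctly explains the restriction $n\ge 2$, a point the paper leaves implicit.
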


We proceed with some new binomial sums involving Fibonacci and Lucas numbers.
\begin{theorem}\label{thm_bin_Fib}
For $m$ an integer, we have
\begin{equation}\label{bin_Fib1}
{\sum_{j=1}^n} \binom {n}{j}  \Big\lfloor \frac{j}{2} \Big\rfloor F_{j+m}
= \frac{n}{2} F_{2n+m-1} - \frac{1}{4} \big ( F_{2n+m} + (-1)^m F_{n-m} \big)
\end{equation}
and
\begin{equation}\label{bin_Luc1}
{\sum_{j=1}^n} \binom {n}{j} \Big\lfloor \frac{j}{2} \Big\rfloor L_{j+m} 
= \frac{n}{2} L_{2n+m-1} - \frac{1}{4} \big ( L_{2n+m} - (-1)^m L_{n-m} \big).
\end{equation}
\end{theorem}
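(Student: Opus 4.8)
The plan is to deduce both formulas from the generalized binomial sum \eqref{main_bin_id} of Theorem \ref{thm_bin} by invoking the Binet forms \eqref{binet}. First I would write $F_{j+m} = (\alpha^{j+m}-\beta^{j+m})/(\alpha-\beta)$ and $L_{j+m}=\alpha^{j+m}+\beta^{j+m}$, pull the constant factors $\alpha^m$ and $\beta^m$ out of the sum over $j$, and observe that the remaining sums $\sum_{j=1}^n\binom nj\lfloor j/2\rfloor\alpha^j$ and $\sum_{j=1}^n\binom nj\lfloor j/2\rfloor\beta^j$ are exactly the left-hand side of \eqref{main_bin_id} with $b=1$ and $c=\alpha$ (respectively $c=\beta$), since then $b^{n-j}c^j=\alpha^j$ (respectively $\beta^j$).

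The key simplification is to feed the special values $b=1$, $c=\alpha$ into the right-hand side of \eqref{main_bin_id} and use the golden-ratio identities $1+\alpha=\alpha^2$, $1-\alpha=\beta$ (and symmetrically $1+\beta=\beta^2$, $1-\beta=\alpha$), together with $\alpha\beta=-1$ and $\alpha+\beta=1$. These collapse the right-hand side to
\begin{equation*}
\sum_{j=1}^n\binom nj\Big\lfloor\frac j2\Big\rfloor\alpha^j=\frac n2\,\alpha^{2n-1}-\frac14\big(\alpha^{2n}-\beta^n\big),
\end{equation*}
and the analogous formula with $\alpha$ and $\beta$ interchanged.

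It then remains to recombine. For the Fibonacci case I would form $\frac{1}{\alpha-\beta}\big(\alpha^m\cdot(\cdots)-\beta^m\cdot(\cdots)\big)$; the terms $\frac n2\alpha^{2n+m-1}$ and $-\frac14\alpha^{2n+m}$ (and their $\beta$ counterparts) reassemble via \eqref{binet} into $\frac n2 F_{2n+m-1}-\frac14 F_{2n+m}$, while the cross term contributes $\frac1{4(\alpha-\beta)}\big(\alpha^m\beta^n-\beta^m\alpha^n\big)=\frac{(-1)^m}{4(\alpha-\beta)}\big(\beta^{n-m}-\alpha^{n-m}\big)=-\tfrac14(-1)^mF_{n-m}$, using $\alpha^m\beta^n=(\alpha\beta)^m\beta^{n-m}=(-1)^m\beta^{n-m}$. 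This yields \eqref{bin_Fib1}. For the Lucas case I would instead form $\alpha^m\cdot(\cdots)+\beta^m\cdot(\cdots)$, and the cross term becomes $\tfrac14\big(\alpha^m\beta^n+\beta^m\alpha^n\big)=\tfrac14(-1)^m(\beta^{n-m}+\alpha^{n-m})=\tfrac14(-1)^mL_{n-m}$, giving \eqref{bin_Luc1}.

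The argument is entirely mechanical and presents no real obstacle; the only point requiring a little care is the sign bookkeeping in the cross term $\alpha^m\beta^n\mp\beta^m\alpha^n$, and the fact that the index $n-m$ may be negative is harmless because we work throughout with the Binet forms, which are valid for all integers (consistently with $F_{-n}=(-1)^{n-1}F_n$ and $L_{-n}=(-1)^nL_n$).
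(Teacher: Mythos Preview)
Your proposal is correct and follows exactly the paper's own approach: substitute $(b,c)=(1,\alpha)$ and $(b,c)=(1,\beta)$ into \eqref{main_bin_id}, use $1+\alpha=\alpha^2$, $1-\alpha=\beta$ (and their $\beta$-analogues), and then combine via the Binet forms. Your write-up simply supplies the details that the paper's one-line proof omits.
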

\begin{proof}
Work with $(b,c)=(1,\alpha)$ and $(b,c)=(1,\beta)$ in Theorem \ref{thm_bin}. When simplifying use $F_{-n}=(-1)^{n-1} F_n$
and $L_{-n}=(-1)^{n} L_n$, respectively.
\end{proof}

Identities \eqref{bin_Fib1} and \eqref{bin_Luc1} should be compared to the classical results \cite{CarFer,Vinson} 
\begin{equation*} 
\sum_{j=0}^n \binom {n}{j} F_{j+m} = F_{2n+m} \quad\mbox{and}\quad \sum_{j=0}^n \binom {n}{j} L_{j+m} = L_{2n+m}.
\end{equation*}
\begin{theorem}
For $m$ an integer and $p$ an odd integer, we have
\begin{equation}\label{bin_Fib2} 
\begin{split}
\sum_{j=1}^n \binom {n}{j}  \Big\lfloor \frac{j}{2} \Big\rfloor &F_{2pj+m} 
 \\
&=\begin{cases}
	\frac{n\sqrt{5^{n-1}}}{2}F_p^{n-1}F_{p(n+1)+m} - \frac{\sqrt{5^{n-1}}}{4}F_p^nL_{pn+m} -\frac{1}{4} L_p^nF_{pn+m}, & \text{\rm $n$ odd;} \\[6pt]
	\frac{n\sqrt{5^{n-2}}}{2} F_p^{n-1} L_{p(n+1)+m} - \frac{\sqrt{5^{n}}}{4} F_p^{n} F_{pn+m} + \frac{1}{4} L_p^nF_{pn+m}, & \text{\rm $n$ even,} 
\end{cases}
\end{split} 
\end{equation}
and
\begin{equation}\label{bin_Luc2}
\begin{split}
\sum_{j=1}^n \binom {n}{j} \Big\lfloor \frac{j}{2} \Big\rfloor& L_{2pj+m}\\ 
&= \begin{cases}
	\frac{n\sqrt{5^{n-1}}}{2}F_p^{n-1} L_{p(n+1)+m} - \frac{\sqrt{5^{n+1}}}{4}F_p^n F_{pn+m} -\frac{1}{4} L_p^nL_{pn+m}, & \text{\rm $n$ odd;} \\[6pt]
	\frac{n\sqrt{5^n}}{2} F_p^{n-1} F_{p(n+1)+m} - \frac{\sqrt{5^n}}{4} F_p^{n} L_{pn+m} + \frac{1}{4} L_p^nL_{pn+m}, & \text{\rm $n$ even,} 
\end{cases} 
\end{split}
\end{equation}
\end{theorem}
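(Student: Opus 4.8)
The plan is to follow the template of the proof of Theorem~\ref{thm_bin_Fib}: specialize the master identity~\eqref{main_bin_id} of Theorem~\ref{thm_bin} to two complementary choices of $(b,c)$ and then recombine through the Binet forms~\eqref{binet}. Concretely, I would take $(b,c)=(1,\alpha^{2p})$ and then $(b,c)=(1,\beta^{2p})$ in~\eqref{main_bin_id}; both are admissible for every $n\ge1$ since $c=\alpha^{2p}\ne0$ and $c=\beta^{2p}\ne0$ (and, as $\alpha^{2p}>1$ while $0<\beta^{2p}<1$, also $b\ne\pm c$). With $b=1$ the left-hand side of~\eqref{main_bin_id} is $\sum_{j=1}^n\binom nj\lfloor j/2\rfloor\,\alpha^{2pj}$ (resp.\ the same with $\beta$), and multiplying through by the $j$-independent factor $\alpha^m$ (resp.\ $\beta^m$) turns it into $\sum_{j=1}^n\binom nj\lfloor j/2\rfloor\,\alpha^{2pj+m}$, which is exactly what must be combined to produce $\sum_{j=1}^n\binom nj\lfloor j/2\rfloor F_{2pj+m}$ and the corresponding Lucas sum.

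The first substantive step is to rewrite $b\pm c$ in a form adapted to the Binet forms, and this is where oddness of $p$ is used: from $\alpha\beta=-1$ we get $\alpha^{-1}=-\beta$ and $\beta^{-1}=-\alpha$, hence $\alpha^{-p}=-\beta^{p}$ and $\beta^{-p}=-\alpha^{p}$ for $p$ odd. Therefore $1+\alpha^{2p}=\alpha^{p}(\alpha^{p}+\alpha^{-p})=\alpha^{p}(\alpha^{p}-\beta^{p})=\sqrt5\,F_p\,\alpha^{p}$ and $1-\alpha^{2p}=-\alpha^{p}(\alpha^{p}-\alpha^{-p})=-\alpha^{p}(\alpha^{p}+\beta^{p})=-L_p\,\alpha^{p}$, and symmetrically $1+\beta^{2p}=-\sqrt5\,F_p\,\beta^{p}$ and $1-\beta^{2p}=-L_p\,\beta^{p}$. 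Feeding these (together with $c=\alpha^{2p}$, resp.\ $\beta^{2p}$) into the right-hand side $\frac{cn}{2}(b+c)^{n-1}-\frac{(b+c)^n-(b-c)^n}{4}$ of~\eqref{main_bin_id} and then multiplying by $\alpha^m$ (resp.\ $\beta^m$) collapses every power of $\alpha$ appearing there to an exponent of the form $p(n+1)+m$ or $pn+m$, leaving scalar coefficients assembled from $n$, powers of $\sqrt5\,F_p$, powers of $L_p$, and the signs $(-1)^{n}$, $(-1)^{n-1}$ that come from the minus signs in $-\sqrt5F_p\beta^p$ and $-L_p\beta^p$.

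It remains to combine the two relations. Subtracting the $\beta$-relation from the $\alpha$-relation and dividing by $\sqrt5$ yields $\sum_{j=1}^n\binom nj\lfloor j/2\rfloor F_{2pj+m}$ (via $F_t=(\alpha^t-\beta^t)/\sqrt5$), and adding them yields $\sum_{j=1}^n\binom nj\lfloor j/2\rfloor L_{2pj+m}$ (via $L_t=\alpha^t+\beta^t$); in both cases the parity of $n$ decides whether a given pair $\alpha^{\bullet}\pm\beta^{\bullet}$ assembles into an $F$ or an $L$, which is what produces the two branches of~\eqref{bin_Fib2} and~\eqref{bin_Luc2}. The step I expect to be the only real obstacle is this bookkeeping: keeping the signs $(-1)^{n}$, $(-1)^{n-1}$ straight through the $\pm$ combination, and reducing the resulting half-integer powers of $5$ — e.g.\ $(\sqrt5\,F_p)^{n-1}=\sqrt{5^{\,n-1}}\,F_p^{\,n-1}$, $\sqrt5\,(\sqrt5\,F_p)^{n}=\sqrt{5^{\,n+1}}\,F_p^{\,n}$, $(\sqrt5\,F_p)^{n-1}/\sqrt5=\sqrt{5^{\,n-2}}\,F_p^{\,n-1}$ — to the normalizations $\sqrt{5^{\,n-1}}$, $\sqrt{5^{\,n}}$, $\sqrt{5^{\,n+1}}$, $\sqrt{5^{\,n-2}}$ recorded in the statement. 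Conceptually there is nothing new here beyond the specialize-then-Binet mechanism already used for Theorems~\ref{thm_bin_Fib}, \ref{thm_1} and~\ref{thm_2}; one just has to carry the four cases ($F$ versus $L$, $n$ even versus $n$ odd) through without error.
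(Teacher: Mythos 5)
Your proposal is correct and follows essentially the same route as the paper: specialize \eqref{main_bin_id} at $(b,c)=(1,\alpha^{2p})$ and $(1,\beta^{2p})$, use the odd-$p$ identities $1+\alpha^{2p}=\sqrt5\,F_p\alpha^p$, $1+\beta^{2p}=-\sqrt5\,F_p\beta^p$, $1-\alpha^{2p}=-L_p\alpha^p$, $1-\beta^{2p}=-L_p\beta^p$, multiply by $\alpha^m$, $\beta^m$, and recombine via the Binet forms, with the parity of $n$ producing the two branches. (The paper's proof writes $(b,c)=(1,\alpha^p)$, evidently a typo for $(1,\alpha^{2p})$, since the very identities it invokes are for $b\pm c=1\pm\alpha^{2p}$; your choice is the intended one.)
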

\begin{proof}
Work with $(b,c)=(1,\alpha^p)$ and $(b,c)=(1,\beta^p)$ in Theorem \ref{thm_bin} using, for odd $p$,
$$1+\alpha^{2p}=\sqrt5 F_p\alpha^p,\quad 1+\beta^{2p}=-\sqrt5 F_p\beta^p,\quad 1-\alpha^{2p}=-L_p\alpha^p,\quad 1-\beta^{2p}=-L_p\beta^p.$$  
\end{proof}
\begin{theorem}
For $m$ an integer and $p$ an even integer, we have
\begin{equation*}
\sum_{j=0}^n \binom {n}{j} \Big\lfloor \frac{j}{2} \Big\rfloor F_{2pj+m}= \begin{cases}
	\frac{n}{2}L_p^{n-1}F_{p(n+1)+m} - \frac{1}{4} L_p^nF_{pn+m} -\frac{\sqrt{5^{n-1}}}{4} F_p^nL_{pn+m}, & \text{\rm $n$ odd;} \\[6pt]
	\frac{n}{2} L_p^{n-1} F_{p(n+1)+m} - \frac{1}{4} L_p^{n} F_{pn+m} + \frac{\sqrt{5^n}}{4} F_p^nF_{pn+m}, & \text{\rm $n$ even,} 
\end{cases} 
\end{equation*}
\begin{equation*}
\sum_{j=0}^n \binom {n}{j} \Big\lfloor \frac{j}{2} \Big\rfloor L_{2pj+m} 
= \begin{cases}
	\frac{n}{2} L_p^{n-1} L_{p(n+1)+m} - \frac{1}{4}L^n_p L_{pn+m} - \frac{\sqrt{5^{n+1}}}{4} F_p^n F_{pn+m}, & \text{\rm $n$ odd;} \\[6pt] 
	\frac{n}{2} L_p^{n-1} L_{p(n+1)+m} - \frac{1}{4} L_p^n L_{pn+m} + \frac{\sqrt{5^n}}{4} F_p^n L_{pn+m}, & \text{\rm $n$ even.} 
\end{cases} 
\end{equation*}
\end{theorem}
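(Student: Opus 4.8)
The plan is to specialize the master identity \eqref{main_bin_id} of Theorem~\ref{thm_bin} to the two parameter choices $(b,c)=(1,\alpha^{2p})$ and $(b,c)=(1,\beta^{2p})$ and then recombine the two resulting scalar identities via the Binet forms \eqref{binet}, exactly as in the proof of Theorem~\ref{thm_bin_Fib} and of the preceding theorem. Since $\lfloor 0/2\rfloor=0$, the $j=0$ term is irrelevant and the sums $\sum_{j=0}^n$ in the statement coincide with the sums $\sum_{j=1}^n$ appearing in \eqref{main_bin_id}. With $c=\alpha^{2p}$ the left-hand side of \eqref{main_bin_id} is $\sum_{j=1}^n\binom nj\lfloor j/2\rfloor\alpha^{2pj}$, so after multiplying the $\alpha$-version by $\alpha^m$ and the $\beta$-version by $\beta^m$ and then adding or subtracting, the left-hand side becomes $\sum_j\binom nj\lfloor j/2\rfloor L_{2pj+m}$ or $\sum_j\binom nj\lfloor j/2\rfloor F_{2pj+m}$.

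First I would record the auxiliary identities valid for an \emph{even} $p$. Because $\alpha\beta=-1$ we have $\alpha^{-1}=-\beta$, hence $\alpha^{-p}=(-\beta)^p=\beta^p$ and, symmetrically, $\beta^{-p}=\alpha^p$ when $p$ is even. Consequently
\[
1+\alpha^{2p}=\alpha^p(\alpha^{-p}+\alpha^p)=L_p\alpha^p,\qquad 1-\alpha^{2p}=\alpha^p(\alpha^{-p}-\alpha^p)=-\sqrt5\,F_p\alpha^p,
\]
and likewise $1+\beta^{2p}=L_p\beta^p$, $1-\beta^{2p}=\sqrt5\,F_p\beta^p$; note that the roles of $L_p$ and $\sqrt5\,F_p$ are interchanged relative to the odd-$p$ identities used just above. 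Substituting $b=1$, $c=\alpha^{2p}$ (resp. $c=\beta^{2p}$) into \eqref{main_bin_id} and extracting the common powers of $\alpha^p$ and $\beta^p$ gives
\[
S_\alpha:=\sum_{j=1}^n\binom nj\Big\lfloor\tfrac j2\Big\rfloor\alpha^{2pj}=\frac n2 L_p^{\,n-1}\alpha^{p(n+1)}-\frac14 L_p^{\,n}\alpha^{pn}+\frac{(-1)^n}{4}\sqrt{5^n}\,F_p^{\,n}\alpha^{pn},
\]
\[
S_\beta:=\sum_{j=1}^n\binom nj\Big\lfloor\tfrac j2\Big\rfloor\beta^{2pj}=\frac n2 L_p^{\,n-1}\beta^{p(n+1)}-\frac14 L_p^{\,n}\beta^{pn}+\frac14\sqrt{5^n}\,F_p^{\,n}\beta^{pn},
\]
the only asymmetry being the factor $(-1)^n$ in the last term, coming from $(1-\alpha^{2p})^n=(-1)^n(\sqrt5\,F_p)^n\alpha^{pn}$ versus $(1-\beta^{2p})^n=(\sqrt5\,F_p)^n\beta^{pn}$.

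Finally I would form $\tfrac1{\sqrt5}(\alpha^m S_\alpha-\beta^m S_\beta)$ for the Fibonacci identity and $\alpha^m S_\alpha+\beta^m S_\beta$ for the Lucas identity. In both cases the first two summands recombine at once into $\tfrac n2 L_p^{\,n-1}F_{p(n+1)+m}-\tfrac14 L_p^{\,n}F_{pn+m}$, resp. $\tfrac n2 L_p^{\,n-1}L_{p(n+1)+m}-\tfrac14 L_p^{\,n}L_{pn+m}$. The leftover contribution is $\tfrac{\sqrt{5^{n-1}}}{4}F_p^{\,n}\big((-1)^n\alpha^{pn+m}-\beta^{pn+m}\big)$ in the Fibonacci case and $\tfrac{\sqrt{5^{n}}}{4}F_p^{\,n}\big((-1)^n\alpha^{pn+m}+\beta^{pn+m}\big)$ in the Lucas case, and this is exactly where the parity split in the statement enters: for $n$ even, $(-1)^n=1$ and these collapse to $\sqrt5\,F_{pn+m}$ and $L_{pn+m}$, whereas for $n$ odd, $(-1)^n=-1$ turns them into $-L_{pn+m}$ and $-\sqrt5\,F_{pn+m}$; absorbing the stray power of $5$ via $\sqrt{5^{n-1}}\,\sqrt5=\sqrt{5^n}$ and $\sqrt{5^n}\,\sqrt5=\sqrt{5^{n+1}}$ then yields precisely the four displayed formulas. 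The only delicate point, and the main bookkeeping obstacle, is keeping the factor $(-1)^n$ attached to $\alpha^{pn+m}$ (and not to $\beta^{pn+m}$) throughout, so that the even and odd cases come out with the correct signs and with $F$ and $L$ correctly swapped; beyond this there is no conceptual difficulty.
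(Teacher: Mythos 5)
Your proposal is correct and follows exactly the paper's route: specialize \eqref{main_bin_id} at $(b,c)=(1,\alpha^{2p})$ and $(1,\beta^{2p})$, use the even-$p$ factorizations $1+\alpha^{2p}=L_p\alpha^p$, $1-\alpha^{2p}=-\sqrt5\,F_p\alpha^p$ (and their $\beta$ counterparts), and recombine via the Binet forms, with the $(-1)^n$ from $(1-\alpha^{2p})^n$ producing the parity split. All sign and power-of-$5$ bookkeeping checks out (and you have in fact corrected the paper's small typo, which writes $(b,c)=(1,\alpha^p)$ where $(1,\alpha^{2p})$ is meant).
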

\begin{proof}
Work with $(b,c)=(1,\alpha^p)$ and $(b,c)=(1,\beta^p)$ in Theorem \ref{thm_bin} using, for even $p$,
$$ 1+\alpha^{2p}=L_p\alpha^p,\quad 1+\beta^{2p}=L_p\beta^p, \quad 1-\alpha^{2p}=-\sqrt5 F_p\alpha^p, \quad 1-\beta^{2p}=\sqrt5 F_p\beta^p.$$  
\end{proof}

The classical counterparts of identities \eqref{bin_Fib2} and \eqref{bin_Luc2} are \cite{CarFer}
\begin{equation*}
\sum_{j=0}^n \binom {n}{j} F_{2j+m} = \begin{cases}
5^{(n-1)/2} L_{n+m}, & \text{\rm $n$ odd;} \\[4pt] 
5^{n/2} F_{n+m}, & \text{\rm $n$ even,} 
\end{cases}  
\end{equation*}
as well as
\begin{equation*}
\sum_{j=0}^n \binom {n}{j} L_{2j+m} = \begin{cases}
5^{(n+1)/2} F_{n+m}, & \text{\rm $n$ odd;} \\[4pt]
5^{n/2} L_{n+m}, & \text{\rm $n$ even.} 
\end{cases} 
\end{equation*}
\begin{theorem}
For $m$ an integer, we have
\begin{align*}
\sum_{j=1}^n\binom {n}{j}  \Big\lfloor \frac{j}{2} \Big\rfloor  F_{3j+m}
&= 2^{n-2} \big ( n F_{2n+m+1} - F_{2n+m} + (-1)^n F_{n+m} \big),\\
\sum_{j=1}^n \binom {n}{j}  \Big\lfloor \frac{j}{2} \Big\rfloor L_{3j+m} 
&= 2^{n-2} \big ( n L_{2n+m+1} - L_{2n+m} + (-1)^n L_{n+m} \big ).
\end{align*}
\end{theorem}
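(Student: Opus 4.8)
The plan is to specialize the master binomial identity \eqref{main_bin_id} of Theorem~\ref{thm_bin} to the two pairs $(b,c)=(1,\alpha^3)$ and $(b,c)=(1,\beta^3)$, and then recombine the two resulting relations through the Binet forms \eqref{binet}, exactly as was done for the linear case in Theorem~\ref{thm_bin_Fib} and for the higher multiples in the subsequent theorems. The computational engine is the observation that, since $\alpha^2=\alpha+1$, one has $\alpha^3=2\alpha+1$, hence
\[
1+\alpha^3 = 2(\alpha+1) = 2\alpha^2, \qquad 1-\alpha^3 = -2\alpha,
\]
and symmetrically $1+\beta^3=2\beta^2$, $1-\beta^3=-2\beta$. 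These are precisely the reductions that make the specialization of \eqref{main_bin_id} collapse to a clean closed form.

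Concretely, I would write \eqref{main_bin_id} with $b=1$ and $c=\alpha^3$, namely
\[
\sum_{j=1}^n \binom{n}{j}\Big\lfloor\frac{j}{2}\Big\rfloor \alpha^{3j}
= \frac{\alpha^3 n}{2}(1+\alpha^3)^{n-1} - \frac{(1+\alpha^3)^n-(1-\alpha^3)^n}{4},
\]
substitute $1+\alpha^3=2\alpha^2$ and $1-\alpha^3=-2\alpha$, and extract the powers of $2$; the right-hand side becomes $n\,2^{n-2}\alpha^{2n+1} - 2^{n-2}\big(\alpha^{2n}-(-1)^n\alpha^n\big)$. Multiplying through by $\alpha^m$ shifts every Fibonacci-type index by $m$, giving
\[
\sum_{j=1}^n \binom{n}{j}\Big\lfloor\frac{j}{2}\Big\rfloor \alpha^{3j+m}
= n\,2^{n-2}\alpha^{2n+m+1} - 2^{n-2}\big(\alpha^{2n+m}-(-1)^n\alpha^{n+m}\big),
\]
and running the same computation with $\beta$ in place of $\alpha$ produces the companion identity.

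Finally, adding the $\alpha$- and $\beta$-versions and using $L_k=\alpha^k+\beta^k$ yields the Lucas formula, while subtracting them, dividing by $\alpha-\beta=\sqrt5$ and using $F_k=(\alpha^k-\beta^k)/(\alpha-\beta)$ yields the Fibonacci formula; in both cases one factors out $2^{n-2}$ to match the stated shape. I do not expect any genuine obstacle here: the only thing to watch is the bookkeeping of exponents and of the sign $(-1)^n$ produced by $(1-\alpha^3)^n=(-2\alpha)^n=(-1)^n2^n\alpha^n$. Once the reductions $1+\alpha^3=2\alpha^2$ and $1-\alpha^3=-2\alpha$ (together with their $\beta$-analogues) are recorded, the identities drop out mechanically; equivalently, one could split $F_{3j+m}$ and $L_{3j+m}$ by Binet inside the sums from the outset, which amounts to the same calculation.
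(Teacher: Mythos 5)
Your proposal is correct and follows exactly the paper's own route: the paper likewise specializes Theorem~\ref{thm_bin} to $(b,c)=(1,\alpha^3)$ and $(b,c)=(1,\beta^3)$ and simplifies via $1-\alpha^3=-2\alpha$, $1+\alpha^3=2\alpha^2$ and their $\beta$-analogues before recombining with the Binet forms. The exponent and sign bookkeeping in your sketch checks out, so nothing further is needed.
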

\begin{proof}
Work with $(b,c)=(1,\alpha^3)$ and $(b,c)=(1,\beta^3)$ in Theorem \ref{thm_bin}. When simplifying use 
$$1 - \alpha^3 = - 2\alpha,\quad 1 - \beta^3 = - 2\beta, \quad 1+ \alpha^3 = 2\alpha^2,\quad 1 + \beta^3 = 2\beta^2.$$
\end{proof}
\begin{theorem}
For any integer $m$, we have
\begin{equation*}\label{bin_Fib4}
\sum_{j=1}^n \binom {n}{j}\Big\lfloor \frac{j}{2} \Big\rfloor 2^j F_{j+m}  =
\begin{cases}
n F_{3n+m-2} - \frac{1}{4} F_{3n+m} + \frac{5^{n/2}}{4}  F_{m}, & \text{\rm $n$ even;} \\[6pt]
n F_{3n+m-2} - \frac{1}{4} F_{3n+m} - \frac{5^{(n-1)/2}}{4}  L_{m}, & \text{\rm $n$ odd,} 
\end{cases} 
\end{equation*}
and
\begin{equation*}
\sum_{j=1}^n \binom {n}{j} \Big\lfloor \frac{j}{2} \Big\rfloor 2^j L_{j+m}  =
\begin{cases}
n L_{3n+m-2} - \frac{1}{4} L_{3n+m} + \frac{5^{n/2}}{4}  L_{m}, & \text{\rm $n$ even;} \\[6pt] 
n L_{3n+m-2} - \frac{1}{4} L_{3n+m} - \frac{5^{(n+1)/2}}{4}  F_{m}, & \text{\rm $n$ odd.} 
\end{cases} 
\end{equation*}
\end{theorem}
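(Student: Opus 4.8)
The strategy mirrors the earlier Fibonacci and Lucas corollaries of Theorem~\ref{thm_bin}: we specialize the master identity~\eqref{main_bin_id} to $(b,c)=(1,2\alpha)$ and $(b,c)=(1,2\beta)$ and then recombine the two resulting identities through the Binet forms~\eqref{binet}. With $b=1$ the weight $b^{n-j}c^{j}$ becomes $2^{j}\alpha^{j}$ (resp.\ $2^{j}\beta^{j}$), so after multiplying both sides of~\eqref{main_bin_id} by $\alpha^{m}$ (resp.\ $\beta^{m}$) the left-hand side is precisely $\sum_{j=1}^{n}\binom{n}{j}\lfloor j/2\rfloor 2^{j}\alpha^{j+m}$ (resp.\ with $\beta$).

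The algebraic facts that collapse the right-hand side are $1+2\alpha=\alpha^{3}$ and $1+2\beta=\beta^{3}$, which follow at once from $\alpha^{2}=\alpha+1$ (hence $\alpha^{3}=2\alpha+1$) and the analogue for $\beta$, together with $1-2\alpha=-\sqrt{5}$ and $1-2\beta=\sqrt{5}$. Inserting $b+c=\alpha^{3}$, $b-c=-\sqrt5$, $c=2\alpha$ into~\eqref{main_bin_id} and multiplying by $\alpha^{m}$ gives
\[
\sum_{j=1}^{n}\binom{n}{j}\Big\lfloor\frac{j}{2}\Big\rfloor 2^{j}\alpha^{j+m}
= n\alpha^{3n+m-2}-\frac{\alpha^{3n+m}}{4}+\frac{(-1)^{n}5^{n/2}\alpha^{m}}{4},
\]
and the same computation with $\beta$ yields the analogous identity, except that $(1-2\beta)^{n}=5^{n/2}$ carries no sign, so its last term is $\frac{1}{4}5^{n/2}\beta^{m}$.

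Subtracting the $\beta$-identity from the $\alpha$-identity and dividing by $\alpha-\beta=\sqrt5$ produces the Fibonacci formula; adding them produces the Lucas formula. In both cases the first two terms become $nF_{3n+m-2}-\frac14 F_{3n+m}$ (resp.\ $nL_{3n+m-2}-\frac14 L_{3n+m}$), and the remaining contribution is $\frac14 5^{n/2}$ times $\frac{(-1)^{n}\alpha^{m}-\beta^{m}}{\sqrt5}$ (resp.\ times $(-1)^{n}\alpha^{m}+\beta^{m}$). The one point needing care is the parity of $n$, which enters through the mismatched signs of $(1-2\alpha)^{n}$ and $(1-2\beta)^{n}$: for even $n$ these expressions equal $F_{m}$ (resp.\ $L_{m}$), while for odd $n$ they equal $-L_{m}/\sqrt5$ (resp.\ $-\sqrt5\,F_{m}$), and merging the stray $\sqrt5$ into $5^{n/2}$ turns the prefactor into $5^{(n-1)/2}$ (resp.\ $5^{(n+1)/2}$) with the signs shown in the statement. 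There is no genuine obstacle; beyond the short specialization, the proof is just this even/odd bookkeeping of half-powers of $5$.
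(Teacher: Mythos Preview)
Your proof is correct and follows exactly the approach the paper uses: specialize \eqref{main_bin_id} at $(b,c)=(1,2\alpha)$ and $(b,c)=(1,2\beta)$, invoking $1+2\alpha=\alpha^{3}$, $1+2\beta=\beta^{3}$, $1-2\alpha=-\sqrt5$, $1-2\beta=\sqrt5$, and then combine via Binet. Your write-up actually supplies more of the parity bookkeeping than the paper's own (very terse) proof does.
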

\begin{proof}
Work with $(b,c)=(1,2\alpha)$ and $(b,c)=(1,2\beta)$ in Theorem \ref{thm_bin}. When simplifying use 
$$1 - 2\alpha = - \sqrt{5},\quad 1 - 2\beta = \sqrt{5}, \quad 1 + 2\alpha = \alpha^3, \quad 1 + 2\beta = \beta^3.$$
\end{proof}

The classical counterparts of these identities are \cite{CarFer}
\begin{equation*}
\sum_{j=0}^n \binom {n}{j} 2^j F_{j+m} = F_{3n+m} \qquad\mbox{and}\qquad \sum_{j=0}^n \binom {n}{j} 2^j L_{j+m} = L_{3n+m}.
\end{equation*}

Although it is possible to state some more binomial identities we conclude with the following sums.
\begin{theorem}\label{thm5_bin_Fib}
For any integers $m$ and $q$, we have
\begin{align*}
\sum_{j=0}^n (-1)^{q(n-j)} &\binom {n}{j}   \Big\lfloor \frac{j}{2} \Big\rfloor F_{2qj+m}\\
& =
\begin{cases}
\frac{n}{2} F_{q(n+1)+m} L_q^{n-1} - \frac{1}{4} F_{qn+m} L_q^n +\frac{5^{n/2}}{4}  F_{qn+m} F_q^n, & \text{\rm $n$ even;} \\[6pt]
\frac{n}{2} F_{q(n+1)+m} L_q^{n-1} - \frac{1}{4} F_{qn+m} L_q^n - \frac{5^{(n-1)/2}}{4}  L_{qn+m} F_q^n, & \text{\rm $n$ odd,} 
\end{cases} \\
\sum_{j=0}^n (-1)^{q(n-j)} &\binom {n}{j}  \Big\lfloor \frac{j}{2} \Big\rfloor L_{2qj+m} \nonumber\\
& =
\begin{cases}
\frac{n}{2} L_{q(n+1)+m} L_q^{n-1} - \frac{1}{4} L_{qn+m} L_q^n + \frac{5^{n/2}}{4}  L_{qn+m} F_q^n, & \text{\rm $n$ even;} \\[6pt]
\frac{n}{2} L_{q(n+1)+m} L_q^{n-1} - \frac{1}{4} L_{qn+m} L_q^n - \frac{5^{(n+1)/2}}{4}  F_{qn+m} F_q^n, & \text{\rm $n$ odd.} 
\end{cases} 
\end{align*}
\end{theorem}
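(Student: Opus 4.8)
The plan is to specialize Theorem~\ref{thm_bin} via the Binet forms, exactly as in the preceding proofs. Write $F_{2qj+m}=(\alpha^{2qj+m}-\beta^{2qj+m})/\sqrt5$ and $L_{2qj+m}=\alpha^{2qj+m}+\beta^{2qj+m}$. The sign $(-1)^{q(n-j)}$ can be absorbed into the $b^{n-j}$ factor: with $(b,c)=\big((-1)^q,\alpha^{2q}\big)$ one has $b^{n-j}c^j=(-1)^{q(n-j)}\alpha^{2qj}$, and likewise with $(b,c)=\big((-1)^q,\beta^{2q}\big)$. So it suffices to evaluate \eqref{main_bin_id} for these two parameter choices, multiply by $\alpha^m$ (resp.\ $\beta^m$), and recombine through the Binet forms.

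The key algebraic input is $\alpha\beta=-1$, which gives $(-1)^q=(\alpha\beta)^q=\alpha^q\beta^q$. Hence on the $\alpha$-side
\begin{equation*}
b+c=\alpha^q\beta^q+\alpha^{2q}=\alpha^q(\alpha^q+\beta^q)=\alpha^qL_q,\qquad
b-c=\alpha^q\beta^q-\alpha^{2q}=-\alpha^q(\alpha^q-\beta^q)=-\sqrt5\,\alpha^qF_q,
\end{equation*}
and symmetrically $b+c=\beta^qL_q$, $b-c=\sqrt5\,\beta^qF_q$ on the $\beta$-side. Substituting into \eqref{main_bin_id} and multiplying by $\alpha^m$ yields
\begin{equation*}
\sum_{j=0}^n(-1)^{q(n-j)}\binom nj\Big\lfloor\frac j2\Big\rfloor\alpha^{2qj+m}
=\frac n2L_q^{n-1}\alpha^{q(n+1)+m}-\frac{L_q^n}{4}\alpha^{qn+m}+\frac{(-1)^n5^{n/2}F_q^n}{4}\alpha^{qn+m},
\end{equation*}
together with the same identity in $\beta$ but without the extra $(-1)^n$ on the last term, since $b-c$ is positive on the $\beta$-side (the $j=0$ term may be included because $\lfloor0/2\rfloor=0$).

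Subtracting the two identities and dividing by $\alpha-\beta=\sqrt5$ gives the Fibonacci formula, and adding them gives the Lucas formula; in both, the first two summands recombine immediately into $\tfrac n2L_q^{n-1}F_{q(n+1)+m}$ and $-\tfrac14L_q^nF_{qn+m}$ (resp.\ with $L$ in place of $F$). The one delicate point — and the origin of the four branches — is the last summand: after recombination it carries a factor $(-1)^n\alpha^{qn+m}\mp\beta^{qn+m}$, which equals $\sqrt5\,F_{qn+m}$ when $n$ is even and $-L_{qn+m}$ when $n$ is odd (and, on the Lucas side, $L_{qn+m}$ for $n$ even, $-\sqrt5\,F_{qn+m}$ for $n$ odd). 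Resolving these parities and folding the stray $\sqrt5$ into $5^{n/2}$ or $5^{(n\pm1)/2}$ produces exactly the stated branches. Finally, the degenerate possibilities $b=\pm c$ do not occur for $q\neq0$, while $q=0$ collapses to $F_m$ (resp.\ $L_m$) times $\sum_j\binom nj\lfloor j/2\rfloor=2^{n-2}(n-1)$, which matches both branches directly.
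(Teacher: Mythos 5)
Your proposal is correct and follows essentially the same route as the paper: substituting $(b,c)=((-1)^q,\alpha^{2q})$ and $((-1)^q,\beta^{2q})$ into Theorem~\ref{thm_bin}, using $(-1)^q\pm\alpha^{2q}=\alpha^qL_q$ or $-\sqrt5\,\alpha^qF_q$ (and the $\beta$ analogues), and recombining via the Binet forms with the parity of $n$ determining the branches. Your verification of the sign bookkeeping in $(-1)^n\alpha^{qn+m}\mp\beta^{qn+m}$ and of the degenerate case $q=0$ is accurate.
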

\begin{proof} Work with $(b,c)=((-1)^q,\alpha^{2q})$ and $(b,c)=((-1)^q,\beta^{2q})$ in Theorem \ref{thm_bin}, respectively. When simplifying use 
$$(- 1)^q + \alpha^{2q} = \alpha^q L_q,\,\, (- 1)^q - \alpha^{2q} = - \alpha^q F_q \sqrt 5,\,\, (- 1)^q + \beta^{2q} = \beta^q L_q,\,\, (- 1)^q - \beta^{2q} = \beta^q F_q \sqrt 5.$$
\end{proof}
\begin{corollary} 
Let $n$, $r$ and $s$ be non-negative integers such that $n\ge r + s$. Let $b$ and $c$ are nonzero numbers. Then
\begin{equation}\label{eq.rttgpmv}
\begin{split}
\sum_{j = 0}^n {\binom{n - s - r}{j - s}\Big\lfloor \frac{j}2 \Big\rfloor } b^{n - j - r} c^{j - s} &= \frac{(n-r)c+bs}{2}(b + c)^{n - r - s - 1}  \\ 
&\quad\, - \frac14\left( {(b + c)^{n - r - s}  - ( - 1)^s (b - c)^{n - r - s} } \right).
\end{split}
\end{equation}
\end{corollary}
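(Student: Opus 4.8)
The plan is to deduce the corollary from Theorem~\ref{thm_bin} by a shift of the summation index followed by a parity decomposition of the floor function. First I would set $N=n-s-r$, a non-negative integer by the hypothesis $n\ge r+s$, and substitute $i=j-s$. Since $\binom{N}{i}$ vanishes unless $0\le i\le N$, the range $0\le j\le n$ contributes only $0\le i\le N$, and the left-hand side of \eqref{eq.rttgpmv} becomes
\[
\sum_{i=0}^{N}\binom{N}{i}\Big\lfloor\frac{i+s}{2}\Big\rfloor b^{N-i}c^{i}.
\]
Next I would invoke the elementary identity
\[
\Big\lfloor\frac{i+s}{2}\Big\rfloor=\Big\lfloor\frac{i}{2}\Big\rfloor+\Big\lfloor\frac{s}{2}\Big\rfloor+\delta_{i,s},\qquad
\delta_{i,s}=\begin{cases}1,& i\text{ and }s\text{ both odd},\\ 0,&\text{otherwise},\end{cases}
\]
which is checked by running through the four parity classes of the pair $(i,s)$. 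Substituting it splits the sum into three parts.

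The first part, $\sum_{i=0}^{N}\binom{N}{i}\lfloor i/2\rfloor b^{N-i}c^{i}$, is exactly \eqref{main_bin_id} with $n$ replaced by $N$ (the $i=0$ term vanishing), so it equals $\tfrac{cN}{2}(b+c)^{N-1}-\tfrac14\big((b+c)^{N}-(b-c)^{N}\big)$. The second part is $\lfloor s/2\rfloor\sum_{i=0}^{N}\binom{N}{i}b^{N-i}c^{i}=\lfloor s/2\rfloor(b+c)^{N}$ by the binomial theorem. The third part is present only when $s$ is odd, and then it is the odd-index partial binomial sum $\sum_{i\text{ odd}}\binom{N}{i}b^{N-i}c^{i}=\tfrac12\big((b+c)^{N}-(b-c)^{N}\big)$, obtained by half-differencing the expansions of $(b+c)^{N}$ and $(b-c)^{N}$.

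Finally I would add the three parts, distinguishing $s$ even from $s$ odd (so that $\lfloor s/2\rfloor$ is $s/2$ or $(s-1)/2$ and the third part is included or not), and one finds in both cases that the total equals
\[
\frac{cN}{2}(b+c)^{N-1}+\Big(\frac{s}{2}-\frac14\Big)(b+c)^{N}+\frac{(-1)^{s}}{4}(b-c)^{N}.
\]
Writing $(b+c)^{N}=(b+c)(b+c)^{N-1}$ and using $N+s=n-r$ to merge the first two terms into $\tfrac12\big((n-r)c+bs\big)(b+c)^{N-1}-\tfrac14(b+c)^{N}$, this is precisely the right-hand side of \eqref{eq.rttgpmv}. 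The steps requiring care are the parity bookkeeping that produces $\delta_{i,s}$ and the verification that the three parts collapse to the same closed form for both parities of $s$; the degenerate case $N=0$ (where the right-hand side is read in the obvious limiting sense, or simply checked to equal $\lfloor s/2\rfloor$) is immediate. I expect this parity case-check, together with the final recombination of the $(b\pm c)^{N}$ contributions into the advertised form, to be the main --- though entirely routine --- obstacle.
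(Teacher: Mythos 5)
Your argument is correct, but it is genuinely different from the paper's. The paper obtains \eqref{eq.rttgpmv} in one line by differentiating \eqref{main_bin_id} $r$ times with respect to $b$ and $s$ times with respect to $c$ (and implicitly dividing through by the resulting constant $n!/(n-r-s)!$, which converts $\binom{n}{j}\frac{(n-j)!}{(n-j-r)!}\frac{j!}{(j-s)!}$ into $\binom{n-r-s}{j-s}$). You instead shift the index to $i=j-s$, $N=n-r-s$, and split $\lfloor (i+s)/2\rfloor$ as $\lfloor i/2\rfloor+\lfloor s/2\rfloor+[\,i,s\text{ both odd}\,]$; I checked the four parity cases of this decomposition, the evaluation of the three resulting sums (Theorem~\ref{thm_bin} with $n\mapsto N$, the binomial theorem, and the odd-index half-difference $\tfrac12\bigl((b+c)^N-(b-c)^N\bigr)$), and the recombination via $n-r=N+s$ into the stated right-hand side, including the degenerate case $N=0$; all of it goes through. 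The trade-off: the paper's differentiation is shorter and mechanical but hides the bookkeeping of the falling factorials and the normalizing constant, whereas your route is entirely elementary and makes the role of the parity of $s$ (the $(-1)^s$ in the answer) transparent, at the cost of a case analysis. Both are legitimate proofs of the corollary.
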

\begin{proof}
Differentiate  \eqref{main_bin_id} $r$ times with respect to $b$ and $s$ times with respect to $c$.
\end{proof}

In view of \eqref{eq.rttgpmv}, it is obvious that Theorems \ref{thm_bin_Fib} to \ref{thm5_bin_Fib} allow further generalizations. 
\begin{theorem}
Let $n$, $r$ and $s$ be non-negative integers. Then
\begin{gather*}
\sum_{j = 1}^n {\binom{n - s - r}{j - s}\Big\lfloor \frac{j}{2} \Big\rfloor } = 2^{n - s - r - 2} (n + s - r - 1),\qquad n > r + s,\\
\sum_{j = 1}^n {(-1)^j\binom{n - s - r}{j - s}\Big\lfloor \frac{j}{2} \Big\rfloor } = 2^{n - s - r - 2},\qquad n \ge r + s+2.
\end{gather*}
\end{theorem}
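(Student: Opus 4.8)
The plan is to obtain both identities as direct specializations of Corollary \eqref{eq.rttgpmv}, with no additional machinery. For the first identity I would set $b=c=1$ in \eqref{eq.rttgpmv}. Then the weight $b^{n-j-r}c^{j-s}$ collapses to $1$, and since $\lfloor 0/2\rfloor=0$ the $j=0$ term may be dropped, so the left-hand side is exactly $\sum_{j=1}^{n}\binom{n-s-r}{j-s}\lfloor j/2\rfloor$. On the right $b+c=2$ and $b-c=0$; because the hypothesis $n>r+s$ forces $n-r-s\ge 1$, the term $(b-c)^{n-r-s}$ is a positive power of $0$ and vanishes, so the right side reduces to $\frac{(n-r)+s}{2}\,2^{\,n-r-s-1}-\tfrac14\,2^{\,n-r-s}=(n+s-r-1)\,2^{\,n-s-r-2}$, which is the claimed value.

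For the second identity I would instead take $b=1$ and $c=-1$ in \eqref{eq.rttgpmv}. Then $b^{n-j-r}c^{j-s}=(-1)^{j-s}=(-1)^s(-1)^j$, so the left side equals $(-1)^s\sum_{j=1}^{n}(-1)^j\binom{n-s-r}{j-s}\lfloor j/2\rfloor$, the $j=0$ term again contributing nothing. On the right $b+c=0$ while $b-c=2$; the hypothesis $n\ge r+s+2$ makes both exponents $n-r-s-1$ and $n-r-s$ at least $1$, so every occurrence of $(b+c)$ is raised to a positive power and dies, leaving $-\tfrac14\bigl(0-(-1)^s 2^{\,n-r-s}\bigr)=(-1)^s 2^{\,n-s-r-2}$. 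Cancelling the common factor $(-1)^s$ from both sides yields the stated formula.

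There is no real difficulty here; the only point requiring care is the bookkeeping of the vanishing-power conditions, which is also why the boundary cases are excluded. The inequalities $n>r+s$ and $n\ge r+s+2$ are precisely what guarantees that the relevant powers of $0$ are genuine positive powers rather than $0^0$; on the line $n=r+s$ the interpretation $0^0=1$ would contribute and the identities would fail. I would also note in passing that the upper summation limit $n$ is harmless: for $j>n-r$ (and for $j<s$) the binomial $\binom{n-s-r}{j-s}$ vanishes by the usual convention, so truncating the sum at $n$ rather than at $n-r$ changes nothing, consistently with the way \eqref{eq.rttgpmv} is phrased. With these remarks the two identities follow at once.
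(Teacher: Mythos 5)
Your proof is correct and is exactly the paper's argument: the paper's entire proof reads ``Evaluate \eqref{eq.rttgpmv} at $b=c$ and $b=-c$, respectively,'' which is your specialization $b=c=1$ and $(b,c)=(1,-1)$ up to the harmless common factor $b^{n-r-s}$. Your additional bookkeeping of the $(-1)^s$ factor and of why the hypotheses $n>r+s$ and $n\ge r+s+2$ keep the powers of $0$ from degenerating to $0^0$ is a correct and welcome elaboration of what the paper leaves implicit.
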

\begin{proof}
Evaluate \eqref{eq.rttgpmv} at $b=c$ and $b=-c$, respectively.
\end{proof} 
\begin{corollary}
Let $n$, $r$ and $s$ be non-negative integers such that $n\ge r + s+2$. Then
\begin{gather*}
\sum_{j = 1}^{\left\lfloor {n/2} \right\rfloor } {\binom{n - s - r}{2j - s}j}  = 2^{n - s - r - 3} (n + s - r),\\
\sum_{j = 1}^{\left\lfloor {n/2} \right\rfloor} \binom{n - s - r}{2j - s + 1} j = 2^{n - s - r - 3} (n + s - r - 2).
\end{gather*}
\end{corollary}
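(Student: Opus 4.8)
The plan is to obtain both identities from the preceding Theorem by splitting the sum $\sum_{j=1}^n \binom{n-s-r}{j-s}\lfloor j/2\rfloor$ into its even‑ and odd‑indexed parts; the alternating companion sum in that Theorem is exactly the tool that isolates these parts. Set
\[
A := \sum_{j=1}^n \binom{n-s-r}{j-s}\Big\lfloor \frac{j}{2}\Big\rfloor, \qquad B := \sum_{j=1}^n (-1)^j\binom{n-s-r}{j-s}\Big\lfloor \frac{j}{2}\Big\rfloor .
\]
By the Theorem, $A = 2^{n-s-r-2}(n+s-r-1)$ (valid since $n>r+s$) and $B = 2^{n-s-r-2}$ (valid since $n\ge r+s+2$); note that the hypothesis $n\ge r+s+2$ is precisely what is needed for the $B$ formula, and it automatically gives $n>r+s$. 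Then $A+B = 2\sum_{j\text{ even}}\binom{n-s-r}{j-s}\lfloor j/2\rfloor$ and $A-B = 2\sum_{j\text{ odd}}\binom{n-s-r}{j-s}\lfloor j/2\rfloor$.

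First I would treat the even part. Substituting $j=2i$ gives $\lfloor j/2\rfloor = i$ and binomial coefficient $\binom{n-s-r}{2i-s}$, so
\[
\sum_{\substack{1\le j\le n\\ j\text{ even}}}\binom{n-s-r}{j-s}\Big\lfloor \frac{j}{2}\Big\rfloor \;=\; \sum_{i=1}^{\lfloor n/2\rfloor}\binom{n-s-r}{2i-s}\,i ,
\]
where the terms with $2i-s<0$ or $2i-s>n-s-r$ vanish under the usual convention on binomial coefficients, so that the upper limit $\lfloor n/2\rfloor$ may be used without changing the value. Dividing, $\tfrac12(A+B) = \tfrac12\,2^{n-s-r-2}(n+s-r) = 2^{n-s-r-3}(n+s-r)$, which is the first identity.

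Next, for the odd part, substituting $j=2i+1$ gives $\lfloor j/2\rfloor = i$ and binomial coefficient $\binom{n-s-r}{2i+1-s}$; the $i=0$ term vanishes because of the factor $i$, and again all out‑of‑support terms vanish, so
\[
\sum_{\substack{1\le j\le n\\ j\text{ odd}}}\binom{n-s-r}{j-s}\Big\lfloor \frac{j}{2}\Big\rfloor \;=\; \sum_{j=1}^{\lfloor n/2\rfloor}\binom{n-s-r}{2j-s+1}\,j .
\]
Then $\tfrac12(A-B) = \tfrac12\,2^{n-s-r-2}(n+s-r-2) = 2^{n-s-r-3}(n+s-r-2)$, the second identity. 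The only points requiring care, rather than any genuine obstacle, are the index bookkeeping — verifying that the reindexings $j\mapsto 2i$ and $j\mapsto 2i+1$ reproduce exactly the stated ranges once the vanishing (out‑of‑support and $i=0$) terms are accounted for — and keeping track of signs in the final one‑line algebra, where $A+B$ produces the factor $n+s-r$ and $A-B$ the factor $n+s-r-2$.
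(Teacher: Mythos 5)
Your proof is correct and is exactly the argument the paper intends: the corollary is stated without proof as an immediate consequence of the preceding theorem, obtained by adding and subtracting its two identities to isolate the even- and odd-indexed terms. Your bookkeeping of the reindexing, the vanishing out-of-support binomial coefficients, and the final arithmetic all check out.
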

\begin{lemma}\label{lem.ee72u15}
If $p$, $q$, $r$ and $s$ are rational numbers, then
\[p + q\sqrt 5 = r + s\sqrt 5\iff p=r,\quad q=s.
\]
\end{lemma}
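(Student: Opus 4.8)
The backward implication is immediate: if $p=r$ and $q=s$ then trivially $p+q\sqrt5 = r+s\sqrt5$. So the content is the forward implication, and the plan is the standard rationality argument. Starting from $p+q\sqrt5 = r+s\sqrt5$ with $p,q,r,s\in\mathbb{Q}$, I would first rearrange to isolate the radical, writing $p-r = (s-q)\sqrt5$. The strategy is then a proof by contradiction on the hypothesis $q\ne s$.

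Assume $q \ne s$. Then $s-q$ is a nonzero rational, so we may divide and obtain $\sqrt5 = \dfrac{p-r}{s-q}$, which exhibits $\sqrt5$ as a quotient of rationals, hence as a rational number. This contradicts the irrationality of $\sqrt5$. Therefore $q=s$ must hold; substituting back into $p-r = (s-q)\sqrt5 = 0$ gives $p=r$, completing the argument.

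The only substantive ingredient is the irrationality of $\sqrt5$, which I would either cite as well known or dispatch in one line: if $\sqrt5 = a/b$ in lowest terms with $b\ge1$, then $a^2 = 5b^2$, so the prime $5$ divides $a^2$ and hence $a$; writing $a=5c$ gives $5c^2 = b^2$, so $5$ divides $b$ as well, contradicting $\gcd(a,b)=1$. This is the place where one uses that $5$ is prime (equivalently, unique factorization in $\mathbb{Z}$), and it is really the heart of the matter; the rest is bookkeeping. I do not anticipate any genuine obstacle here — the lemma is a routine linear-independence statement for $\{1,\sqrt5\}$ over $\mathbb{Q}$, included presumably because later $\mathbb{Q}$-linear identifications of coefficients involving $\sqrt5$ (as in the Fibonacci/Lucas evaluations above) rely on it.
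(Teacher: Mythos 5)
Your argument is correct and complete: the reduction to the irrationality of $\sqrt5$ and the one-line descent proof of that irrationality are exactly the standard route. The paper itself states this lemma without any proof, treating it as well known, so there is nothing to compare against; your write-up supplies precisely what is omitted.
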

\begin{theorem}\label{thm.cfmxf53}
Let $n$, $r$ and $s$ be non-negative integers with $n\geq r\geq s$. If $s$ is odd, then 
\begin{gather}\label{eq.ca9zow5}
\sum_{j = 1}^{\left\lfloor {n/2} \right\rfloor } {\binom{n -  r}{2j - s}}5^jj 
= 2^{n - r - 3} 5^{\frac{s + 1}{2}} \big( {(n - r)L_{n - r - 1}  + 2sF_{n - r} }\big),\\
\label{eq.xux6fze}
\sum_{j = 1}^{\left\lfloor {n/2} \right\rfloor} {\binom{n - r}{2j + 1 - s}}5^j j 
= 2^{n - r  - 3} 5^{\frac{s - 1}{2}} \big( {5(n - r)F_{n - r - 1}  + 2(s-1)L_{n - r} } \big),
\end{gather}
while if $s$ is even, then 
\begin{gather}\label{eq.hl2fr0p}
\sum_{j = 1}^{\left\lfloor {n/2} \right\rfloor } {\binom{n -  r}{2j - s}}5^jj= 2^{n - r - 3} 5^{\frac{s}2} \big( {5(n - r)F_{n - r - 1}  + 2sL_{n - r} } \big),\\ 
\label{eq.i0t3vw8}
\sum_{j = 1}^{\left\lfloor {n/2} \right\rfloor} {\binom{n - r}{2j + 1 - s}}5^{j}j = 2^{n - r  - 3} 5^{\frac{s}{2}} \big( {(n - r)L_{n - r - 1}  + 2(s-1)F_{n - r } } \big).
\end{gather}
\end{theorem}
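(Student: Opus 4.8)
The plan is to specialise the master formula \eqref{eq.rttgpmv}. Because $r\ge s\ge0$, we may replace $r$ by $r-s$ in it (the constraint $n\ge(r-s)+s$ is precisely $n\ge r$), which turns the binomial coefficient into $\binom{n-r}{j-s}$ and gives, for nonzero $b,c$,
\begin{equation*}
\sum_{j=0}^{n}\binom{n-r}{j-s}\Big\lfloor\frac j2\Big\rfloor b^{n-r+s-j}c^{j-s}
=\frac{(n-r+s)c+bs}{2}(b+c)^{n-r-1}-\frac14\big((b+c)^{n-r}-(-1)^{s}(b-c)^{n-r}\big).
\end{equation*}
I would then set $b=1$ and $c=\sqrt5$, so that $b+c=1+\sqrt5=2\alpha$ and $b-c=1-\sqrt5=2\beta$. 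Writing $N:=n-r$, the right-hand side becomes $2^{N-2}\big((N+s)\sqrt5+s\big)\alpha^{N-1}-2^{N-2}\big(\alpha^{N}-(-1)^{s}\beta^{N}\big)$, while the left-hand side becomes $\sum_{j}\binom{N}{j-s}\lfloor j/2\rfloor(\sqrt5)^{j-s}$.

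The second step is to split this last sum by the parity of $j$. The key point is that $(\sqrt5)^{j-s}$ is rational exactly when $j\equiv s\pmod2$, while $\lfloor j/2\rfloor$ equals $j/2$ or $(j-1)/2$ according to the parity of $j$; hence, after the substitution $j=2t$ (respectively $j=2t+1$) and renaming $t$ back to $j$, the even-$j$ and odd-$j$ parts of the sum become, up to a factor that is a power of $\sqrt5$, two of the four sums in the statement. For $s$ even the even-$j$ part equals $5^{-s/2}\sum_{j}\binom{N}{2j-s}5^{j}j$ (which is rational) and the odd-$j$ part equals $\sqrt5\cdot5^{-s/2}\sum_{j}\binom{N}{2j+1-s}5^{j}j$; for $s$ odd the two parities exchange roles. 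In every case the range $1\le j\le\lfloor n/2\rfloor$ already captures all nonzero terms, because $r\ge s$ forces $\lfloor n/2\rfloor\ge\lfloor(N+s)/2\rfloor$ and $\binom{N}{2j-s}$, $\binom{N}{2j+1-s}$ vanish beyond that.

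The third step is to invoke Lemma \ref{lem.ee72u15}. Using the Binet relations $\alpha^{N-1}=\tfrac12(L_{N-1}+\sqrt5\,F_{N-1})$, $\alpha^{N}-\beta^{N}=\sqrt5\,F_{N}$ and $\alpha^{N}+\beta^{N}=L_{N}$, I would write the right-hand side above as $P+Q\sqrt5$ with $P,Q$ rational, and equate $P$ and $Q$ with the rational and $\sqrt5$-parts of the left-hand side. This yields the four identities up to simplification, and the only simplifications needed are $F_{m-1}+L_{m-1}=2F_{m}$ and $5F_{m-1}+L_{m-1}=2L_{m}$ (both immediate from $\alpha^{m}=\alpha\cdot\alpha^{m-1}$): these collapse the stray mixed term $s(F_{N-1}+L_{N-1})$ to $2sF_{N}$ and $s(5F_{N-1}+L_{N-1})$ to $2sL_{N}$, which, combined with the residual $-2F_{N}$ or $-2L_{N}$ coming from $\alpha^{N}-(-1)^{s}\beta^{N}$, produces the coefficients $2s$ or $2(s-1)$ appearing in \eqref{eq.ca9zow5}--\eqref{eq.i0t3vw8}. (One may instead bypass Lemma \ref{lem.ee72u15} altogether by also substituting $c=-\sqrt5$ and then adding and subtracting the two instances, which isolates the even-$j$ and odd-$j$ subsums directly.)

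The mathematical content here is elementary; the one real obstacle is the bookkeeping — keeping the four sub-cases apart, tracking how the floor function interacts with the reindexings $j\mapsto2j$ and $j\mapsto2j+1$, checking the summation ranges, and carrying out the Fibonacci--Lucas reductions without sign slips. Nothing is deep, but the precise closed forms of the statement leave no room for error.
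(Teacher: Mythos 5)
Your proposal is correct and follows essentially the same route as the paper: specialize the differentiated identity \eqref{eq.rttgpmv} at $(b,c)$ proportional to $(1,\sqrt5)$, split the sum by the parity of $j$, and separate rational and $\sqrt5$-parts via Lemma \ref{lem.ee72u15}, finishing with $L_{m}+F_{m}=2F_{m+1}$ and $L_{m}+5F_{m}=2L_{m+1}$. Your explicit relabeling $r\mapsto r-s$ at the outset is in fact a slight improvement in clarity, since the paper's displayed intermediate identity still carries $\binom{n-s-r}{\,\cdot\,}$ and only implicitly matches the theorem's $\binom{n-r}{\,\cdot\,}$.
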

\begin{proof}
Choosing $b=1/2$, $c=\sqrt 5/2$ in \eqref{eq.rttgpmv} and assuming $s$ is an odd integer gives
\begin{align*}
\frac{\sqrt 5}{2^{n - r - s}}\sum_{j = 0}^{\left\lfloor {n/2}  \right\rfloor } \binom{n - s - r}{2j - s} 5^jj & + \frac1{2^{n - r - s}}\sum_{j = 1}^{\left\lceil {n/2} \right\rceil } {\binom{n - s - r}{2j - 1 - s}}5^j(j - 1)\\
&=\frac{\sqrt5(n - s - r)}{4}\alpha ^{n - r - s - 1}  + \frac{s}{2}\alpha ^{n - r - s} - \frac14L_{n - r - s} .
\end{align*}
Use of $2\alpha^m=L_m + F_m\sqrt 5$ in the above identity and application of Lemma \ref{lem.ee72u15} yield \eqref{eq.ca9zow5} and \eqref{eq.xux6fze}. The proof of \eqref{eq.hl2fr0p} and~\eqref{eq.i0t3vw8} is similar.
\end{proof}

\section{More series and identities}

From the general relations \eqref{gf_floor_id1} and \eqref{bin_gen_fkt} it is obvious that many particular examples 
can be studied. For instance, choosing $a_n=n^2$ we get the next corollary.
\begin{corollary}\label{example2}
For $k\geq 1$ and $z\in\mathbb{C}$ with $|z|<1$ the following expressions are valid:
\begin{gather*}
\sum_{n=0}^\infty \Big\lfloor \frac{n}{k} \Big\rfloor^2 z^n = \frac{z^k(1+z^k)}{(1-z)(1-z^k)^2},\\
\sum_{n=0}^\infty (-1)^n \Big\lfloor \frac{n}{k} \Big\rfloor^2  z^n = (-1)^k \frac{z^k (1+(-1)^k z^k)}{(1+z) (1+(-1)^{k+1}z^k)^2}.
\end{gather*}
\end{corollary}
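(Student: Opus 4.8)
The plan is to apply Lemma~\ref{fund_lem} directly with the choice $a_n = n^2$, exactly in the spirit of the treatment of $a_n=n$ in Lemma~\ref{example1}. First I would record the elementary generating function
\[
F(z) = \sum_{n=0}^\infty n^2 z^n = \frac{z(1+z)}{(1-z)^3},
\]
valid for $|z|<1$; this can be obtained, for instance, by applying $z\,\frac{d}{dz}$ to $\sum_{n\ge 0} n z^n = z/(1-z)^2$. Since $|z|<1$ forces $|z^k|<1$, the hypotheses of Lemma~\ref{fund_lem} are satisfied with this $F$.

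Next, the first identity of Lemma~\ref{fund_lem} gives
\[
\sum_{n=0}^\infty \Big\lfloor \frac{n}{k}\Big\rfloor^2 z^n = \frac{1-z^k}{1-z}\,F(z^k) = \frac{1-z^k}{1-z}\cdot\frac{z^k(1+z^k)}{(1-z^k)^3},
\]
and cancelling one factor of $1-z^k$ yields the claimed closed form $\dfrac{z^k(1+z^k)}{(1-z)(1-z^k)^2}$.

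For the alternating sum I would substitute $z\mapsto -z$ in the identity just proved (equivalently, invoke the second identity of Lemma~\ref{fund_lem} with $a_n=n^2$). Using $(-z)^k = (-1)^k z^k$ together with $\bigl(1-(-1)^k z^k\bigr)^2 = \bigl(1+(-1)^{k+1}z^k\bigr)^2$, the right-hand side becomes $(-1)^k\dfrac{z^k\bigl(1+(-1)^k z^k\bigr)}{(1+z)\bigl(1+(-1)^{k+1}z^k\bigr)^2}$, which is exactly the stated formula.

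There is no genuine obstacle here; the computation is routine. The only points that require care are the single cancellation of $1-z^k$ in the first step and the corresponding identification of $\bigl(1-(-1)^k z^k\bigr)^2$ with $\bigl(1+(-1)^{k+1}z^k\bigr)^2$ in the alternating case, since these are the places where a sign slip could sneak in; I would therefore carry them out explicitly.
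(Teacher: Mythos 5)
Your proposal is correct and coincides with the paper's own argument: the authors likewise invoke Lemma~\ref{fund_lem} with $a_n=n^2$ and the generating function $z(1+z)/(1-z)^3$, then obtain the alternating version by the substitution $z\mapsto -z$. The explicit cancellation of one factor of $1-z^k$ and the sign bookkeeping you flag are exactly the (routine) steps the paper leaves implicit.
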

\begin{proof}
Use Lemma \ref{fund_lem} with $a_n=n^2$ in conjunction with 
$
F(z) = \sum\limits_{n=0}^\infty n^2 z^n = \frac{z(1+z)}{(1-z)^3}$. The second identity follows from the first by replacing $z$ with $-z$.
\end{proof}

Corollary \ref{example2} also leads to new Fibonacci (Lucas) series evaluations, one of which is stated in the next theorem. 
\begin{theorem}
For $k\geq 1$ and $m$ integers, we have
\begin{equation*}
\sum_{n=0}^\infty \Big\lfloor \frac{n}{k} \Big\rfloor^2 \frac{F_{n+m-2}}{2^{n+1}} 
= \frac{4^k F_{m+2k} + 2^{k+1}(2^{2k-1}-(-1)^k)F_{m+k} + 2^kF_{m-k} + (1-2(-4)^k)F_{m}}{\big(4^{k} - 2^{k} L_k + (-1)^k\big)^2},
\end{equation*}
\begin{equation*}
\sum_{n=0}^\infty \Big\lfloor \frac{n}{k} \Big\rfloor^2 \frac{L_{n+m-2}}{2^{n+1}} 
= \frac{4^k L_{m+2k} + 2^{k+1}(2^{2k-1}-(-1)^k)L_{m+k} + 2^kL_{m-k} + (1-2(-4)^k)L_{m}}{\big(4^{k} - 2^{k} L_k + (-1)^k\big)^2}.
\end{equation*}
\end{theorem}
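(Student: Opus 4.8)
The plan is to follow the proof of Theorem~\ref{thm_1}, replacing Lemma~\ref{example1} by Corollary~\ref{example2}. Since $\lfloor n/k\rfloor^{2}\le n^{2}$ and $n^{2}(\alpha/2)^{n}\to 0$, both series converge absolutely, so I may substitute $z=\alpha/2$ and $z=\beta/2$ into the first identity of Corollary~\ref{example2}. Multiplying the two resulting equations by $\alpha^{m-2}/2$ and $\beta^{m-2}/2$ and clearing the inner powers of $2$ through $(\alpha/2)^{k}=\alpha^{k}/2^{k}$, $1-\alpha/2=(2-\alpha)/2$ and $1-(\alpha/2)^{k}=(2^{k}-\alpha^{k})/2^{k}$, I obtain
\[
S_\alpha:=\sum_{n=0}^\infty \Big\lfloor \frac{n}{k}\Big\rfloor^{2} \frac{\alpha^{n+m-2}}{2^{n+1}}=\frac{\alpha^{m+k-2}\,(2^{k}+\alpha^{k})}{(2-\alpha)(2^{k}-\alpha^{k})^{2}},
\]
and the analogous formula $S_\beta$ with $\alpha$ everywhere replaced by $\beta$. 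By the Binet forms~\eqref{binet}, the Fibonacci sum equals $(S_\alpha-S_\beta)/\sqrt5$ and the Lucas sum equals $S_\alpha+S_\beta$, so it suffices to combine $S_\alpha$ and $S_\beta$.

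Placing $S_\alpha$ and $S_\beta$ over a common denominator and using $\alpha\beta=-1$, $\alpha+\beta=1$ gives the two key factorizations
\[
(2-\alpha)(2-\beta)=4-2(\alpha+\beta)+\alpha\beta=1,\qquad (2^{k}-\alpha^{k})(2^{k}-\beta^{k})=4^{k}-2^{k}L_{k}+(-1)^{k},
\]
so that both $S_\alpha+S_\beta$ and $(S_\alpha-S_\beta)/\sqrt5$ acquire the denominator $\big(4^{k}-2^{k}L_{k}+(-1)^{k}\big)^{2}$, exactly as in the statement. The corresponding numerator (for the Lucas sum) is
\[
\alpha^{m+k-2}(2^{k}+\alpha^{k})(2-\beta)(2^{k}-\beta^{k})^{2}+\beta^{m+k-2}(2^{k}+\beta^{k})(2-\alpha)(2^{k}-\alpha^{k})^{2},
\]
with the minus sign and an extra factor $1/\sqrt5$ for the Fibonacci sum.

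The remaining and only laborious step is to expand and collapse this numerator. Setting $t=2^{k}$, expanding $(t+\alpha^{k})(t-\beta^{k})^{2}$, multiplying by $(2-\beta)$ and then by $\alpha^{m+k-2}$ yields about a dozen monomials in $\alpha,\beta$; adding the image under $\alpha\leftrightarrow\beta$ and repeatedly using $\alpha^{a}\beta^{b}=(-1)^{b}\alpha^{a-b}$ (hence $\alpha^{a}\beta^{b}+\alpha^{b}\beta^{a}=(-1)^{b}L_{a-b}$ and $\alpha^{a}\beta^{b}-\alpha^{b}\beta^{a}=(-1)^{b}\sqrt5\,F_{a-b}$ when $a\equiv b\pmod 2$) converts each monomial into a Lucas (resp.\ Fibonacci) number of index $m-2$, $m-3$, $m\pm k-2$, $m\pm k-3$, $m+2k-2$ or $m+2k-3$, with coefficient a monomial in $2^{k}$, $4^{k}$, $8^{k}$ and $(-1)^{k}$. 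Finally, each pair of consecutive indices is merged by $L_{j}=2L_{j-2}+L_{j-3}$ (equivalently $F_{j}=2F_{j-2}+F_{j-3}$), collapsing the eight terms into the four advertised ones of index $m$, $m+k$, $m-k$, $m+2k$. I expect the sign-and-power bookkeeping in this last consolidation to be the main obstacle — one must check, for instance, that the $L_{m+k}$-coefficient assembles to $8^{k}-2^{k+1}(-1)^{k}=2^{k+1}(2^{2k-1}-(-1)^{k})$ and the $L_{m}$-coefficient to $1-2(-4)^{k}$, which requires that in each group the coefficient of $L_{j-2}$ be exactly twice that of $L_{j-3}$ — but it is entirely mechanical. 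The Fibonacci and Lucas identities come out together as the normalized difference and the sum of the $\alpha$- and $\beta$-computations.
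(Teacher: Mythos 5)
Your proposal is correct and follows exactly the route the paper intends: the paper states this theorem without any written proof, merely noting that it ``follows from'' Corollary~\ref{example2} by the same substitution-and-Binet technique used for Theorem~\ref{thm_1}, which is precisely what you carry out (and your identities $(2-\alpha)(2-\beta)=1$, $(2^k-\alpha^k)(2^k-\beta^k)=4^k-2^kL_k+(-1)^k$, and the resulting $S_\alpha$, $S_\beta$ check out, e.g.\ at $k=1$ both sides reduce to $29L_m+18L_{m-1}$). Your write-up is in fact more detailed than the paper's, leaving only the advertised mechanical expansion of the symmetrized numerator.
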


For the binomial transform of $a_{\lfloor n/k \rfloor} = \lfloor {n}/{k} \rfloor^2$, i.e., the sum
\begin{equation*}
u_{n} = \sum_{j=0}^n \binom {n}{j} b^{n-j} c^j \Big\lfloor \frac{n}{k} \Big\rfloor^2
\end{equation*}
with $b$ and $c$ real, the ordinary generating function is given by
\begin{align*}
S_u^* (z,k) =  \frac{ 1-\left (\frac{cz}{1-bz}\right )^k }{1-(b+c)z} 
\frac{\left (\frac{cz}{1-bz}\right )^k \left ( 1 + \left (\frac{cz}{1-bz}\right )^k \right )}{\left ( 1 - \left (\frac{cz}{1-bz}\right )^k \right )^3} =  \frac{\left (\frac{cz}{1-bz}\right )^k \left ( 1 + \left (\frac{cz}{1-bz}\right )^k \right )} 
{(1-(b+c)z) \left ( 1 - \left (\frac{cz}{1-bz}\right )^k \right )^2}.
\end{align*}

To highlight the increasing algebraic complexity, we again consider only the case $k=2$. In this particular case the ordinary  generating function reduces to
\begin{equation*}
S_u^* (z,2) = \frac{(cz)^2 \big((1-bz)^2 + (cz)^2\big)}{(1-(b+c)z)\big((1-bz)^2 - (cz)^2\big )^2}.
\end{equation*}
From this expression we can prove that the following result.
\begin{theorem}\label{thm_bin2}
For nonzero real numbers $b$ and $c$ and $n\geq 0$, we have
\begin{equation}\label{main_bin_id2}
\sum_{j=1}^n \binom {n}{j} \Big\lfloor \frac{j}{2} \Big\rfloor^2 b^{n-j} c^j  
= \frac{c^2n (n - 1)(b + c)^{n - 2}}{4} + \frac{(b + c)^n - (b - c)^n}{8} - \frac{cn (b - c)^{n - 1}}{4}.
\end{equation}
\end{theorem}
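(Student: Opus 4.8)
The plan is to mirror the proof of Theorem~\ref{thm_bin}: extract the coefficient of $z^n$ from the generating function $S_u^*(z,2)$ displayed just before the statement. The decisive preliminary step is to recognise the two factorizations
\[
(1-bz)^2-(cz)^2=\bigl(1-(b+c)z\bigr)\bigl(1-(b-c)z\bigr),\qquad
(1-bz)^2+(cz)^2=\tfrac{1}{2}\Bigl(\bigl(1-(b+c)z\bigr)^2+\bigl(1-(b-c)z\bigr)^2\Bigr),
\]
the second of which is just $\tfrac{1}{2}\bigl((p-q)^2+(p+q)^2\bigr)=p^2+q^2$ with $p=1-bz$, $q=cz$. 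Feeding these into $S_u^*(z,2)$ and cancelling one factor of $1-(b+c)z$ against the numerator yields the clean splitting
\[
S_u^*(z,2)=\frac{(cz)^2}{2}\left(\frac{1}{\bigl(1-(b+c)z\bigr)^3}+\frac{1}{\bigl(1-(b+c)z\bigr)\bigl(1-(b-c)z\bigr)^2}\right).
\]

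For the first summand I would expand with the binomial series $(1-t)^{-3}=\sum_{n\ge0}\binom{n+2}{2}t^n$ at $t=(b+c)z$; after multiplying by $\tfrac{1}{2}(cz)^2$ its contribution to the coefficient of $z^n$ is $\tfrac{c^2}{2}\binom{n}{2}(b+c)^{n-2}=\tfrac{1}{4}c^2 n(n-1)(b+c)^{n-2}$, precisely the first term on the right of \eqref{main_bin_id2}. For the second summand I would carry out the partial fraction decomposition
\[
\frac{1}{\bigl(1-(b+c)z\bigr)\bigl(1-(b-c)z\bigr)^2}=\frac{A}{1-(b+c)z}+\frac{B}{1-(b-c)z}+\frac{C}{\bigl(1-(b-c)z\bigr)^2},
\]
where evaluating at the two poles and at $z=0$ gives $A=\dfrac{(b+c)^2}{4c^2}$, $C=-\dfrac{b-c}{2c}$ and $B=-\dfrac{b^2-c^2}{4c^2}$; then expand each piece using $\dfrac{1}{1-t}=\sum_{n\ge0}t^n$ and $\dfrac{1}{(1-t)^2}=\sum_{n\ge0}(n+1)t^n$, and again multiply by $\tfrac{1}{2}(cz)^2$.

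Writing $u=b+c$ and $v=b-c$, summing the two contributions shows that for $n\ge2$ the coefficient of $z^n$ in $S_u^*(z,2)$ equals
\[
\frac{c^2 n(n-1)}{4}u^{n-2}+\frac{u^n}{8}-\frac{u v^{n-1}}{8}-\frac{c(n-1)v^{n-1}}{4},
\]
and the identity $u-v=2c$, i.e.\ $u v^{n-1}=v^n+2c v^{n-1}$, collapses the last three terms to $\tfrac{1}{8}(u^n-v^n)-\tfrac{1}{4}c n v^{n-1}$, matching \eqref{main_bin_id2}. It remains only to dispose of the degenerate cases $n=0,1$ by inspection (both sides vanish, since $\lfloor 1/2\rfloor=0$ and the factors $n(n-1)$ and $n$ annihilate the formally singular powers $(b\pm c)^{-1}$, $(b\pm c)^{-2}$). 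The whole argument is routine once the splitting above is in hand; the only mildly delicate point is the partial-fraction bookkeeping and the final regrouping of the $v^{n-1}$-terms, neither of which presents a conceptual obstacle.
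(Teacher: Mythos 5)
Your proof is correct, and it takes a genuinely different --- and considerably cleaner --- route than the paper's. The paper keeps the numerator $(cz)^2\bigl((1-bz)^2+(cz)^2\bigr)$ intact, expands $\bigl(1-(b+c)z\bigr)^{-3}\bigl(1-(b-c)z\bigr)^{-2}$ as a Cauchy product, and is then forced to evaluate three shifted double sums $A(n)$, $B(n)$ via explicit convolution identities for $\sum_j (j+1)(n-j)(n-1-j)q^j$ and its relatives, arriving at an ``intimidating expression'' that must be simplified by hand. Your key observation, that with $p=1-bz$, $q=cz$ one has $(1-bz)^2+(cz)^2=\tfrac12\bigl((p-q)^2+(p+q)^2\bigr)=\tfrac12\bigl((1-(b+c)z)^2+(1-(b-c)z)^2\bigr)$, lets the numerator cancel against the denominator term by term and reduces the whole computation to one binomial-series expansion plus a three-term partial fraction with easily determined coefficients --- no convolutions at all. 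I verified the coefficients $A=\frac{(b+c)^2}{4c^2}$, $B=-\frac{b^2-c^2}{4c^2}$, $C=-\frac{b-c}{2c}$, the resulting coefficient of $z^n$, and the final regrouping via $u=v+2c$; all are right, and the low cases $n=0,1$ check directly. Two small remarks: your phrase ``cancelling one factor of $1-(b+c)z$'' undersells what happens (one piece cancels two such factors, the other cancels $(1-(b-c)z)^2$), though the displayed splitting is correct; and the partial-fraction step tacitly assumes $b\neq\pm c$, which is harmless since for fixed $n$ both sides of \eqref{main_bin_id2} are polynomials in $b,c$, so the identity extends by continuity (the paper itself treats $b=c$ separately in a corollary). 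Note also that the paper's subsequent Remark gives an even shorter proof by applying $\frac{d}{dx}\bigl(x\frac{d}{dx}\bigr)$ to the auxiliary functions $h_1,h_2$ at $x=1$; your argument sits between the two in length but stays entirely within the generating-function framework the theorem is built on.
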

\begin{proof} We write
\begin{equation*}
S_u^* (z,2) = \frac{(cz)^2 \big((1-bz)^2 + (cz)^2\big)}{(1-(b+c)z)^3 (1-(b-c)z)^2}
\end{equation*}
and use the power series
$\frac{1}{(1-wz)^2} = \sum\limits_{n=0}^\infty (n+1)w^n z^n$ and $
\frac{2}{(1-wz)^3} = \sum\limits_{n=0}^\infty  (n+1)(n+2) w^n z^n
$
to get
\begin{align*}
\frac{2}{(1-(b+c)z)^3 (1-(b-c)z)^2} &\\=  \sum_{n=0}^\infty \sum_{j=0}^n & (j+1)(b-c)^j (n-j+1)(n-j+2)(b+c)^{n-j} z^n. 
\end{align*}
This shows that
\begin{align*}
S_u^*(z,2) & = \frac{c^2}{2} \sum_{n=0}^\infty \sum_{j=0}^n (j+1)(b-c)^j (n-j+1)(n-j+2)(b+c)^{n-j} z^{n+2} \\
&\quad\, - bc^2 \sum_{n=0}^\infty \sum_{j=0}^n (j+1)(b-c)^j (n-j+1)(n-j+2)(b+c)^{n-j} z^{n+3} \\
&\quad\, + \frac{b^2c^2+c^4}{2} \sum_{n=0}^\infty \sum_{j=0}^n (j+1)(b-c)^j (n-j+1)(n-j+2)(b+c)^{n-j} z^{n+4} \\
& =  c^2 z^2 + (3bc^2+c^3)z^3 + \sum_{n=4}^\infty \big(A(n)+B(n)\big) z^n
\end{align*}
with
\begin{align*}
A(n) &= c^2 (n-1) (b-c)^{n-2} + 3c^2 (n-2) (b-c)^{n-3} (b+c) - 2bc^2 (n-2) (b-c)^{n-3},\\
B(n) & =  \sum_{j=0}^{n-4} (j+1)(b-c)^j (b+c)^{n-4-j} \Big ( \frac{c^2}{2} (b+c)^2 (n-j)(n-1-j) \nonumber \\
&\quad\, - bc^2 (b+c)(n-1-j)(n-2-j) + \frac{1}{2}(b^2c^2+c^4)(n-2-j)(n-3-j) \Big ) \nonumber\\
& =  \frac{c^2}{2} (b+c)^{n-2} \sum_{j=0}^{n-4} (j+1)(n-j)(n-1-j) q^j  \nonumber\\
&\quad\, - bc^2 (b+c)^{n-3} \sum_{j=0}^{n-4} (j+1)(n-1-j)(n-2-j) q^j \nonumber \\
&\quad\, + \frac{1}{2}(b^2c^2+c^4) (b+c)^{n-4} \sum_{j=0}^{n-4} (j+1)(n-2-j)(n-3-j) q^j, 
\end{align*}
where $q=(b-c)/(b+c)$. After simplifying $A(n)$ this shows that
\begin{align*}
\sum_{j=1}^n \binom {n}{j} \Big\lfloor \frac{j}{2} \Big\rfloor^2 b^{n-j} c^j  = \begin{cases}
c^2, & n=2; \\ 
3bc^2 + c^3, & n=3; \\
c^2 (b-c)^{n-3} (b+c)(2n-3) - 2c^3(b-c)^{n-3} + B(n), & n\geq 4. 
\end{cases} 
\end{align*}

From here we use the convolution identities 
\begin{align*}
& \sum_{j=0}^{n-4} (j+1)(n-j)(n-1-j) z^j = \frac{1}{(z-1)^4 z^3} \Big ( (n^2+3n+2) z^5 - (2n^2+2n-4)z^4 \\
& \qquad+ (n^2-n)z^3 - (6n-12)z^n + (22n-46)z^{n+1} - (28n-64)z^{n+2} + (12n-36)z^{n+3} \Big ),\\
& \sum_{j=0}^{n-4} (j+1)(n-1-j)(n-2-j) z^j = \frac{1}{(z-1)^4 z^3} \Big ( (n^2+n) z^5 - (2n^2-2n-4)z^4 \\
& \qquad+ (n^2-3n+2)z^3 - (2n-4)z^n + (8n-16)z^{n+1} - (12n-24)z^{n+2} + (6n-18)z^{n+3} \Big )
\end{align*}
and
\begin{align*}
&\sum_{j=0}^{n-4} (j+1)(n-2-j)(n-3-j) z^j \\ 
&\qquad=  \frac{1}{(z-1)^4 z^3} \Big ( (n^2-5n+6) z - (2n^2-6n)z^2 + (n^2-n)z^3 - 2n z^n + (2n-6)z^{n+1} \Big ),
\end{align*}
insert into $B(n)$ and simplify to get the intimidating expression
\begin{equation*}
\sum_{j=1}^n\binom {n}{j}\Big\lfloor \frac{j}{2} \Big\rfloor^2  b^{n-j} c^j  = \begin{cases}
c^2, & n=2; \\ 
3bc^2 + c^3, & n=3;\\
S(n,b,c,q), & n\geq 4,
\end{cases} 
\end{equation*}
where
\begin{align*}
& S(n,b,c,q) = c^2 (b-c)^{n-3} (b+c)(2n-3) - 2c^3(b-c)^{n-3} \\
&\qquad+ 8 c^6 (b+c)^{n-9} (b-c)^3 \big ( (n^2+3n+2)q^5 - (2n^2+2n-4)q^4 \\
&\qquad+ (n^2-n) q^3 - (6n-12) q^n + (22n-46)q^{n+1} - (28n-64)q^{n+2} + (12n-36)q^{n+3} \big ) \\
&\qquad- 16 b c^6 (b+c)^{n-10} (b-c)^3 \big ( (n^2+n)q^5 - (2n^2-2n-4)q^4 + (n^2-3n+2) q^3\\
&\qquad - (2n-4) q^n + (8n-16)q^{n+1} - (12n-24)q^{n+2} + (6n-18)q^{n+3} \big ) \\
&\qquad+ 8c^4 (b^2c^2+c^4)(b+c)^{n-11} (b-c)^3 \big ( (n^2-5n+6)q - (2n^2-6n)q^2 \\
&\qquad+ (n^2-n) q^3 - 2n q^n + (2n-6)q^{n+1} \big ) 
\end{align*}
with $q=(b-c)/(b+c)$. 

Additional simplifications reduce the above expression to the stated form.
\end{proof}
\begin{remark}
A much shorter and more elegant proof of Theorem \ref{thm_bin2} not relying on generating functions goes as follows. 
Let
\[
h_1 (x;b,c,n) = \sum_{j = 0}^{\left\lfloor {n/2} \right\rfloor } \binom {n}{2j} b^{n - 2j} c^{2j} x^{2j} 
= \frac{(b + cx)^n + (b - cx)^n }{2},
\]
\[
h_2 (x;b,c,n) = \sum_{j = 1}^{\left\lceil {n/2} \right\rceil } \binom {n}{2j-1} b^{n - 2j + 1} c^{2j - 1} x^{2j - 2} 
= \frac{(b + cx)^n - (b - cx)^n}{2x}.
\]
Then
\[
4\sum_{j = 1}^n \binom {n}{j}\Bigl\lfloor {\frac{j}{2}} \Bigr\rfloor ^2 b^{n - j} c^j  = 
\left. {\frac{d}{{dx}}\left( {x\frac{{dh_1 }}{dx}} \right)} \right|_{x = 1}  
+ \left. {\frac{d}{{dx}}\left( {x\frac{{dh_2 }}{dx}} \right)} \right|_{x = 1}
\]
from which the result follows. This procedure also gives Theorem \ref{thm_bin} as
\[
2\sum_{j = 1}^n \binom {n}{j} \Bigl\lfloor {\frac{j}{2}} \Bigr\rfloor b^{n - j} c^j  
= \left. {\frac{{dh_1 }}{dx}} \right|_{x = 1} + \left. {\frac{{dh_2 }}{dx}} \right|_{x = 1}.
\]
Of course, sums involving higher powers of the floor function can be computed in this manner.
\end{remark}

As an example of a binomial sum with Fibonacci (Lucas) numbers and $\big\lfloor {j}/{2} \big\rfloor^2$ we can state the next theorem.
We invite the interested readers to derive many more identities of this kind.
\begin{theorem}
For $m$ an integer and $n$ a non-negative integer, we have
\begin{equation*}
\sum_{j=1}^n \binom {n}{j}  \Big\lfloor \frac{j}{2} \Big\rfloor^2 F_{j+m}
= \frac{n(n-1)}{4} F_{2n+m-2} + \frac{1}{8} \big (F_{2n+m} + (-1)^m F_{n-m} \big) - \frac{(-1)^{m}n}{4} F_{n-2-m},
\end{equation*}
\begin{equation*}
\sum_{j=1}^n \binom {n}{j}  \Big\lfloor \frac{j}{2} \Big\rfloor^2 L_{j+m}
= \frac{n(n-1)}{4} L_{2n+m-2} + \frac{1}{8} \big (L_{2n+m} - (-1)^{m} L_{n-m} \big) + \frac{(-1)^{m}n}{4} L_{n-2-m}.
\end{equation*}
\end{theorem}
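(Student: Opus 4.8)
The plan is to obtain both identities from Theorem~\ref{thm_bin2} by the same substitution trick used throughout the binomial part of the paper. Specifically, apply \eqref{main_bin_id2} with $(b,c)=(1,\alpha)$ and with $(b,c)=(1,\beta)$, so that on the left-hand side the sum $\sum_{j=1}^n\binom nj\lfloor j/2\rfloor^2\,b^{n-j}c^j$ becomes $\sum_{j=1}^n\binom nj\lfloor j/2\rfloor^2\alpha^j$ and $\sum_{j=1}^n\binom nj\lfloor j/2\rfloor^2\beta^j$ respectively. To recover $F_{j+m}$ and $L_{j+m}$ one writes $\alpha^{j+m}=\alpha^m\alpha^j$, $\beta^{j+m}=\beta^m\beta^j$ and uses the Binet forms \eqref{binet}: the Fibonacci identity comes from $(\alpha^m\cdot(\text{RHS at }c=\alpha)-\beta^m\cdot(\text{RHS at }c=\beta))/(\alpha-\beta)$, and the Lucas identity from $\alpha^m\cdot(\text{RHS at }c=\alpha)+\beta^m\cdot(\text{RHS at }c=\beta)$.

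The next step is to evaluate the three terms on the right-hand side of \eqref{main_bin_id2} under these substitutions. With $b=1$, $c=\alpha$ we have $b+c=1+\alpha=\alpha^2$ and $b-c=1-\alpha=-1/\alpha=\beta$ (using $\alpha\beta=-1$, $\alpha+\beta=1$); with $b=1$, $c=\beta$ we get $b+c=1+\beta=\beta^2$ and $b-c=1-\beta=\alpha$. Hence the first term $\tfrac14 c^2 n(n-1)(b+c)^{n-2}$ becomes $\tfrac14 n(n-1)\alpha^{2}\alpha^{2(n-2)}=\tfrac14 n(n-1)\alpha^{2n-2}$ (and the analogue with $\beta$), the middle term $\tfrac18\big((b+c)^n-(b-c)^n\big)$ becomes $\tfrac18(\alpha^{2n}-\beta^{n})$, and the last term $-\tfrac14 cn(b-c)^{n-1}$ becomes $-\tfrac14 n\alpha\beta^{n-1}=-\tfrac14 n\,(-1)\beta^{n-2}=\tfrac14 n\beta^{n-2}$ after using $\alpha\beta=-1$; similarly with the roles of $\alpha,\beta$ swapped. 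Multiplying through by $\alpha^m$ (resp.\ $\beta^m$) and combining as above turns $\alpha^{2n-2}\alpha^m\mp\beta^{2n-2}\beta^m$ into $(\alpha-\beta)F_{2n+m-2}$ or $L_{2n+m-2}$, turns $\alpha^{2n}\alpha^m\mp\beta^{2n}\beta^m$ into $F_{2n+m}$ or $L_{2n+m}$, and turns the cross-terms $\alpha^{m}\beta^{n}\mp\beta^{m}\alpha^{n}$ and $\alpha^{m}\beta^{n-2}\mp\beta^{m}\alpha^{n-2}$ into expressions in $F_{n-m}$, $L_{n-m}$, $F_{n-2-m}$, $L_{n-2-m}$ (with a parity factor $(-1)^m$ arising from $\alpha^m\beta^n=(-1)^n\alpha^{m-n}=\cdots$, more conveniently handled via $F_{-k}=(-1)^{k-1}F_k$, $L_{-k}=(-1)^k L_k$).

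The only real bookkeeping obstacle is tracking the signs and the $(-1)^m$ factors on the cross-terms: the quantities $\alpha^m\beta^{n}$ and $\beta^m\alpha^{n}$ must be rewritten, e.g.\ $\alpha^m\beta^n=(\alpha\beta)^m\beta^{n-m}=(-1)^m\beta^{n-m}$, so that $\alpha^m\beta^n\mp\beta^m\alpha^n=(-1)^m(\beta^{n-m}\mp\alpha^{n-m})$, which contributes $-(-1)^m(\alpha-\beta)F_{n-m}$ or $(-1)^m L_{n-m}$; the same device applied to $\alpha^m\beta^{n-2}$, $\beta^m\alpha^{n-2}$ produces the $F_{n-2-m}$, $L_{n-2-m}$ terms with their stated signs. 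Once these substitutions are carried out and everything is divided by $\alpha-\beta=\sqrt5$ in the Fibonacci case, the claimed closed forms drop out directly; the cases $n=0,1$ are trivially verified since both sides vanish (the sum is empty or has a single term with $\lfloor j/2\rfloor=0$). I would present the substitution values $1+\alpha=\alpha^2$, $1-\alpha=\beta$, etc., as a short displayed line and then simply state that combining the two instances via the Binet forms yields the result, exactly in the style of the proofs of Theorems~\ref{thm_bin_Fib}--\ref{thm5_bin_Fib}.
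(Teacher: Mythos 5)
Your proposal is correct and is precisely the argument the paper intends: the theorem is stated without proof, but by the pattern of Theorem~\ref{thm_bin_Fib} it is meant to follow from Theorem~\ref{thm_bin2} with $(b,c)=(1,\alpha)$ and $(1,\beta)$, using $1+\alpha=\alpha^2$, $1-\alpha=\beta$, $\alpha\beta=-1$ and the Binet forms exactly as you describe. Your sign bookkeeping on the cross-terms ($\alpha^m\beta^n=(-1)^m\beta^{n-m}$, etc.) checks out and reproduces the stated right-hand sides.
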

\begin{corollary} We have
\begin{align*}
\sum_{j=1}^n &\binom {n}{j} \Big\lfloor \frac{j}{2} \Big\rfloor^2 = 
\begin{cases}
0, & n=1; \\ 
2^{n-4}(n^2-n+2), & n\geq 2, 
\end{cases} \\
\sum_{j=1}^n &(-1)^j  \binom {n}{j} \Big\lfloor \frac{j}{2} \Big\rfloor^2 = 
\begin{cases}
0, & n=1; \\ 
1, & n=2; \\
2^{n-3}(n-1), & n\geq 3. 
\end{cases} 
\end{align*}
\end{corollary}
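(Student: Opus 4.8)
The plan is to specialize the master identity~\eqref{main_bin_id2} of Theorem~\ref{thm_bin2} at two convenient parameter values. For the non-alternating sum I would take $b=c=1$, so that $b+c=2$ and $b-c=0$. For every $n\ge2$ the two terms of~\eqref{main_bin_id2} that carry a positive power of $b-c$ then vanish, and the right-hand side reduces to
\[
\frac{n(n-1)\,2^{\,n-2}}{4}+\frac{2^{\,n}}{8}
=2^{\,n-4}\bigl(n(n-1)+2\bigr)=2^{\,n-4}\bigl(n^2-n+2\bigr),
\]
which is exactly the claimed closed form. For the alternating sum I would take $b=1$ and $c=-1$, so that $b+c=0$ and $b-c=2$. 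For every $n\ge3$ the two terms of~\eqref{main_bin_id2} carrying a positive power of $b+c$ vanish, and the right-hand side reduces to
\[
-\frac{2^{\,n}}{8}+\frac{n\,2^{\,n-1}}{4}=(n-1)\,2^{\,n-3},
\]
again the stated value.

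The only delicate point is the behaviour at small $n$, where some of the exponents $n-2$, $n-1$, $n$ attached to the vanishing factors $b+c$ or $b-c$ themselves become $0$. Instead of debating the convention $0^0=1$ inside~\eqref{main_bin_id2}, I would simply compute the sums by hand in those cases. Since $\lfloor1/2\rfloor=0$, the $n=1$ value is $\binom{1}{1}\lfloor1/2\rfloor^2=0$ for both sums; and for $n=2$ the alternating sum equals $-\binom{2}{1}\lfloor1/2\rfloor^2+\binom{2}{2}\lfloor2/2\rfloor^2=0+1=1$. These agree with the piecewise formulas, and combined with the two displays above they establish the corollary.

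I do not expect any real obstacle here; the work is entirely bookkeeping, and the single thing to watch is which powers of $0$ survive at the endpoints of the summation ranges — which is precisely why I would peel off the cases $n=1$ (and $n=2$ for the alternating sum) separately rather than try to force the closed forms to hold uniformly from $n=0$. As a consistency check one may verify the formulas for one or two further small values, e.g.\ $n=2$ gives $2^{-2}(4-2+2)=1$ for the first sum and $n=3$ gives $2^{0}(3-1)=2$ for the second, both of which are immediate to confirm directly.
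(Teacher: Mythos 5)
Your proposal is correct and follows essentially the same route as the paper, which also obtains both identities by setting $b=c$ and $(b,c)=(1,-1)$ in \eqref{main_bin_id2}; the only (harmless) difference is that you verify the small-$n$ cases directly rather than invoking the paper's convention $0^0=1$.
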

\begin{proof}
Set $b=c$ in \eqref{main_bin_id2} in conjunction with the convention $0^0=1$. 
For the second identity work with $b=1$ and $c=-1$ in \eqref{main_bin_id2}. 
\end{proof}
\begin{corollary}
Let $n$, $r$ and $s$ be non-negative integers such that $n\geq r\geq s$.  
Let $b$ and $c$ be nonzero real numbers. Then
\begin{equation}\label{eq.f7u15am}
\begin{split}
\sum_{j = 1}^n \binom{n - r }{j - s}&b^{n - j - r+s} c^{j - s} \Big\lfloor \frac{j}{2} \Big\rfloor ^2  = \frac{{(n - r - 1)(n - r)}}{4} c^2 (b + c)^{n - r - 2}\\ 
&\qquad\qquad\qquad + \frac{{(n - r)c}}{4}\left( {2s(b + c)^{n - r - 1}  - ( - 1)^s (b - c)^{n - r - 1} } \right)\\
&\qquad\qquad\qquad + \frac{{2s(s - 1) + 1} }{8}(b + c)^{n - r }  + \frac{{( - 1)^s (2s - 1)}}{8}(b - c)^{n - r}. 
\end{split}
\end{equation}
\end{corollary}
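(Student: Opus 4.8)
The plan is to derive \eqref{eq.f7u15am} from identity \eqref{main_bin_id2} of Theorem~\ref{thm_bin2} by differentiation with respect to $c$, in the same spirit in which \eqref{eq.rttgpmv} was obtained from \eqref{main_bin_id}. Since $n\geq r\geq s\geq 0$, the integer $N:=n-r+s$ is non-negative, so \eqref{main_bin_id2} applies with $n$ replaced by $N$, giving
\[
\sum_{j=1}^N \binom {N}{j} \Big\lfloor \frac{j}{2} \Big\rfloor^2 b^{N-j} c^j
= \frac{c^2N (N - 1)(b + c)^{N - 2}}{4} + \frac{(b + c)^N - (b - c)^N}{8} - \frac{cN (b - c)^{N - 1}}{4}.
\]
Everything will follow by applying the $s$-th derivative $\partial_c^{\,s}$ to both sides and dividing by the constant $N!/(n-r)!$.

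On the left, $\partial_c^{\,s}\bigl(b^{N-j}c^j\bigr)=\frac{j!}{(j-s)!}\,b^{N-j}c^{j-s}$, which vanishes for $j<s$, and the elementary identity $\binom{N}{j}\frac{j!}{(j-s)!}=\frac{N!}{(N-s)!}\binom{N-s}{j-s}=\frac{N!}{(n-r)!}\binom{n-r}{j-s}$ (valid since $N-s=n-r$) turns the left-hand side into $\frac{N!}{(n-r)!}\sum_{j}\binom{n-r}{j-s}\lfloor j/2\rfloor^2 b^{n-j-r+s}c^{j-s}$, i.e. $\frac{N!}{(n-r)!}$ times the left-hand side of \eqref{eq.f7u15am} (note $b^{N-j}=b^{n-j-r+s}$). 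On the right, the middle term yields $\frac{N!}{(n-r)!}\cdot\frac18\bigl((b+c)^{n-r}-(-1)^s(b-c)^{n-r}\bigr)$ at once, while the first and third terms are handled by the Leibniz rule applied to the products $c^2(b+c)^{N-2}$ and $c(b-c)^{N-1}$: since $\partial_c^{\,i}c^2=0$ for $i\geq 3$ and $\partial_c^{\,i}c=0$ for $i\geq 2$, only the $i=0,1,2$ (resp. $i=0,1$) terms survive, and these produce exactly the $c^2$, the $s$-weighted $c$, and the surviving constant pieces. Dividing through by $N!/(n-r)!$ and collecting, the $(b+c)^{n-r-2}$ term and the $\frac{(n-r)c}{4}\bigl(2s(b+c)^{n-r-1}-(-1)^s(b-c)^{n-r-1}\bigr)$ term come from the first and third terms, and the coefficients $\frac{2s(s-1)+1}{8}$ of $(b+c)^{n-r}$ and $\frac{(-1)^s(2s-1)}{8}$ of $(b-c)^{n-r}$ arise by combining the surviving Leibniz remainders of the first and third terms with the middle term.

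The only genuinely fiddly step is this last consolidation of the coefficients of $(b+c)^{n-r}$ and $(b-c)^{n-r}$; everything else is routine factorial arithmetic, and the whole identity is in any case a polynomial identity in $b$ and $c$, so the finitely many degenerate small cases $N\in\{0,1\}$ cause no difficulty. Equivalently — and perhaps closer to the proof of \eqref{eq.rttgpmv} — one may differentiate \eqref{main_bin_id2} in its original variable $n$ exactly $r-s$ times with respect to $b$ and $s$ times with respect to $c$ and then divide by $n!/(n-r)!$, or one may start from the $h_1,h_2$ representation in the Remark following Theorem~\ref{thm_bin2} and differentiate further in $b$ and $c$; all routes lead to \eqref{eq.f7u15am}.
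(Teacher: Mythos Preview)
Your proposal is correct and follows essentially the same route as the paper: both derive \eqref{eq.f7u15am} by differentiating the polynomial identity \eqref{main_bin_id2} in the parameters. The paper's one-line proof differentiates $r$ times in $b$ and $s$ times in $c$ (which, to match the binomial $\binom{n-r}{j-s}$ and the exponent $b^{n-j-r+s}$, is really the ``$r-s$ times in $b$, $s$ times in $c$'' variant you mention at the end); your main argument just absorbs the $b$-differentiations into the harmless substitution $n\mapsto N=n-r+s$ and then carries out the $s$-fold Leibniz computation in $c$, which is the same calculation and yields the same coefficients.
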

\begin{proof}
Differentiate \eqref{main_bin_id2} $r$ times with respect to $b$ and $s$ times with respect to $c$.
\end{proof}

In particular, we have
\begin{align*}
\sum_{j = 1}^n & \binom{n - r }{j - s}\Big\lfloor \frac{j}{2} \Big\rfloor ^2 =
2^{n - r - 4} \left(\Big(n + 2s -r-\frac12\Big)^2 +\frac74 - 2s\right), \quad n-2 \ge r \ge s,\\
&\sum_{j = 1}^n  {(-1)^j\binom{n - r}{j - s}\Big\lfloor \frac{j}{2} \Big\rfloor ^2 }=2^{n  - r - 3}\left(n + 2s - r - 1\right),\quad n-2 > r\geq  s,
\end{align*}
with the special values, for $n\geq2s+2$,
\begin{gather*}
\sum_{j = 1}^{2n - s}\binom{n - 2s}{j - s}\Big\lfloor \frac{j}{2} \Big\rfloor ^2 = 2^{n - 2s - 4}\left(n^2 - n - 2s + 2\right),
\\
\sum_{j = 1}^n (-1)^j\binom{n - 2s + 1}{j - s}\Big\lfloor \frac{j}{2} \Big\rfloor ^2 = 2^{n - 2s - 2}n.
\end{gather*}
\begin{corollary}
Let $n$, $r$ and $s$ be non-negative integers such that $n-2>r\geq s$.
 Then
\begin{gather*}
\sum_{j = 1}^{\left\lfloor {n/2} \right\rfloor } \binom{n - r}{2j - s} j^2  = 2^{n - r - 5} \left((n -r+ 2s)^2  + n-r \right),\\
\sum_{j = 1}^{\left\lfloor {n/2} \right\rfloor } \binom{n - r}{2j - s + 1} j^2  = 2^{n - r - 5} \left( {(n -r + 2s)^2  
	-3(n-r)-8s+4} \right).
\end{gather*}
\end{corollary}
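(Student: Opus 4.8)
The plan is to extract both identities from \eqref{eq.f7u15am} by separating the summation index according to parity. Write $N=n-r$; the hypothesis $n-2>r\ge s\ge 0$ gives $N\ge 3$ together with $r\ge s$. Evaluating \eqref{eq.f7u15am} at $b=c=1$ annihilates every $(b-c)$-term (the exponents $N-1$ and $N$ are positive), and evaluating it at $b=1$, $c=-1$ annihilates every $(b+c)$-term (the exponents $N-2$, $N-1$, $N$ are positive); the outcome is precisely the pair of ``in particular'' formulas recorded just above, which in the notation $N=n-r$ read
\[
\sum_{j=1}^{n}\binom{n-r}{j-s}\Big\lfloor\tfrac{j}{2}\Big\rfloor^{2}
=2^{\,N-4}\bigl(N^{2}+4sN+4s^{2}-N-4s+2\bigr)
\]
and
\[
\sum_{j=1}^{n}(-1)^{j}\binom{n-r}{j-s}\Big\lfloor\tfrac{j}{2}\Big\rfloor^{2}
=2^{\,N-3}\bigl(N+2s-1\bigr).
\]

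Next I would split each of these sums into its even-index and odd-index parts. Setting $j=2\ell$ in the even part and $j=2\ell+1$ in the odd part, one has $\lfloor j/2\rfloor^{2}=\ell^{2}$ in both cases, so, writing
\[
S_{1}=\sum_{j=1}^{\lfloor n/2\rfloor}\binom{n-r}{2j-s}j^{2},
\qquad
S_{2}=\sum_{j=1}^{\lfloor n/2\rfloor}\binom{n-r}{2j+1-s}j^{2},
\]
the first displayed identity becomes $S_{1}+S_{2}=2^{N-4}(N^{2}+4sN+4s^{2}-N-4s+2)$ and the second becomes $S_{1}-S_{2}=2^{N-3}(N+2s-1)$. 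The only point needing attention is the summation ranges: in the even part the constraint $1\le j\le n$ is exactly $1\le\ell\le\lfloor n/2\rfloor$; in the odd part $1\le j\le n$ is $0\le\ell\le\lfloor(n-1)/2\rfloor$, the $\ell=0$ term vanishes, and if $n$ is even the single missing value $\ell=n/2$ contributes $\binom{n-r}{\,n+1-s\,}=0$ because $n+1-s>n-r$ by $r\ge s$. Hence both inner sums may indeed be run up to $\lfloor n/2\rfloor$.

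Finally I would solve this $2\times2$ linear system for $S_{1}$ and $S_{2}$. Adding the two equations yields
\[
2S_{1}=2^{\,N-4}\bigl(N^{2}+4sN+4s^{2}+N\bigr)=2^{\,N-4}\bigl((N+2s)^{2}+N\bigr),
\]
that is $S_{1}=2^{\,n-r-5}\bigl((n-r+2s)^{2}+n-r\bigr)$; subtracting them yields
\[
2S_{2}=2^{\,N-4}\bigl((N+2s)^{2}-3N-8s+4\bigr),
\]
that is $S_{2}=2^{\,n-r-5}\bigl((n-r+2s)^{2}-3(n-r)-8s+4\bigr)$, which are the two claimed closed forms. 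There is no serious obstacle here: the argument is entirely elementary, and the only mild care required is the bookkeeping of the summation ranges after the parity split and the observation that completing the square in $N+2s$ puts the recombined right-hand sides into exactly the stated shape.
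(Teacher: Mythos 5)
Your proposal is correct and follows exactly the route the paper intends: the corollary is placed immediately after the specializations of \eqref{eq.f7u15am} at $b=c$ and $b=-c$, and the intended (unwritten) proof is precisely your parity split followed by solving the resulting $2\times 2$ system for the even-index and odd-index sums. Your additional care about the summation ranges (the vanishing $\ell=0$ term and the binomial coefficient $\binom{n-r}{n+1-s}=0$ when $n$ is even, thanks to $r\ge s$) is exactly the bookkeeping the paper leaves implicit, and all the algebra checks out.
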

\begin{theorem}
Let $n$, $r$ and $s$ be non-negative integers such that $n\ge r\ge s$. If $s$ is odd, then 
\begin{align*}
\sum_{j = 1}^{\left\lfloor {n/2} \right\rfloor} \binom{n - r}{2j - s}5^j j^2   &= 2^{n - r - 5}\, 5^{(s + 1)/2} \big( {5(n - r - 1)(n - r)F_{n - r  - 2} } \\
&\quad  + 2(2s + 1)(n -  r)L_{n - r - 1}  + 4s^2 F_{n - r } \big),\\
\sum_{j = 1}^{\left\lfloor {n/2} \right\rfloor} \binom{n - r }{2j - s + 1} 5^j j^2  &= 2^{n - r - 5}\, 5^{(s - 1)/2} \big( {5(n -  r - 1)(n - r)L_{n - r - 2} } \\
&\quad { + 10\,(2s - 1)(n - r)F_{n - r - 1}  + 4(s - 1)^2 L_{n - r } } \bigr);
\end{align*}
while, if $s$ is even, then
\begin{align*}
\sum_{j = 1}^{\left\lfloor {n/2} \right\rfloor } \binom{n - r }{2j - s}5^j j^2   &= 2^{n - r - 5}\, 5^{s/2} \big( {5(n -  r - 1)(n - r)L_{n - r  - 2} } \\
&\quad  + 10\,(2s + 1)(n - r)F_{n - r - 1}  + 4s^2 L_{n - r }  \big),\\
\sum_{j = 1}^{\left\lfloor {n/2} \right\rfloor } \binom{n - r }{2j - s + 1}5^{j} j^2  &= 2^{n - r - 5}\, 5^{s/2} \big( {5(n - r - 1)(n - r)F_{n - r - 2} }\\
&\quad\,  + 2\,(2s - 1)(n - r)L_{n - r - 1}  + 4(s-1)^2 F_{n - r}  \big).
\end{align*}
\end{theorem}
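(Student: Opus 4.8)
The plan is to specialize the master identity \eqref{eq.f7u15am} at $b=1/2$ and $c=\sqrt 5/2$, in complete analogy with the passage from \eqref{eq.rttgpmv} to Theorem \ref{thm.cfmxf53}, and then to separate rational and $\sqrt 5$-parts by means of Lemma \ref{lem.ee72u15}. With this choice one has $b+c=\alpha$, $b-c=\beta$, $c^{2}=5/4$ and $b^{\,n-j-r+s}c^{\,j-s}=(\sqrt 5)^{\,j-s}/2^{\,n-r}$, so the left-hand side of \eqref{eq.f7u15am} becomes $2^{-(n-r)}\sum_{j}\binom{n-r}{j-s}(\sqrt 5)^{\,j-s}\lfloor j/2\rfloor^{2}$.

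First I would split this sum according to the parity of the summation index. The even-index terms, reindexed by $j\mapsto 2j$, have $\lfloor j/2\rfloor^{2}=j^{2}$ and binomial coefficient $\binom{n-r}{2j-s}$, while the odd-index terms, reindexed so that the binomial coefficient is $\binom{n-r}{2j+1-s}$, again have $\lfloor j/2\rfloor^{2}=j^{2}$. Exactly one of the two resulting subsums carries an explicit factor $\sqrt 5$, the assignment depending on the parity of $s$: for $s$ odd, the even-index subsum equals $\sqrt 5$ times a rational multiple of $\sum_{j}\binom{n-r}{2j-s}5^{j}j^{2}$ and the odd-index subsum is a rational multiple of $\sum_{j}\binom{n-r}{2j+1-s}5^{j}j^{2}$, and for $s$ even the two roles are interchanged. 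A harmless reindexing absorbs the vanishing $j=0$ term, and the single top term that would push the range past $\lfloor n/2\rfloor$ vanishes because, under $n\ge r\ge s$, its lower binomial index exceeds $n-r$.

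Next I would expand the right-hand side of \eqref{eq.f7u15am} at $b=1/2$, $c=\sqrt 5/2$ by substituting the Binet values $\alpha^{m}=(L_{m}+F_{m}\sqrt 5)/2$ and $\beta^{m}=(L_{m}-F_{m}\sqrt 5)/2$ into the five powers $\alpha^{\,n-r-2}$, $\alpha^{\,n-r-1}$, $\alpha^{\,n-r}$, $\beta^{\,n-r-1}$, $\beta^{\,n-r}$ occurring there, rewriting the result as $P+Q\sqrt 5$ with $P,Q$ rational combinations of $F_{n-r-2},F_{n-r-1},F_{n-r},L_{n-r-2},L_{n-r-1},L_{n-r}$; the signs $(-1)^{s}$ appearing in \eqref{eq.f7u15am} are exactly what makes the even-$s$ and odd-$s$ outcomes differ. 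By Lemma \ref{lem.ee72u15} the rational parts of the two sides must agree, and so must the $\sqrt 5$-parts. Matching the $\sqrt 5$-parts (for $s$ odd) yields the first displayed identity and matching the rational parts the second, with the roles reversed when $s$ is even; in each case one clears the overall $2^{-(n-r)}$ and the power of $5$, the factor $2^{\,n-r-5}$ coming from combining this $2^{-(n-r)}$ with the denominators $4$, $8$, the value $c^{2}=5/4$ and the halvings in the Binet substitution.

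I expect the only real obstacle to be the bookkeeping: keeping track of the even/odd index split (in particular the $\lfloor(2j-1)/2\rfloor=j-1$ versus $\lfloor(2j+1)/2\rfloor=j$ discrepancy), keeping the summation ranges consistent with $\sum_{j=1}^{\lfloor n/2\rfloor}$, and distributing the four terms on the right of \eqref{eq.f7u15am} correctly into their rational and $\sqrt 5$ components in each parity class of $s$. None of this is conceptually new---it is the same mechanism used for Theorem \ref{thm.cfmxf53}---but it is where sign errors are most likely, so I would record all five Binet expansions first and substitute once, handling the four Fibonacci/Lucas identities in a single computation.
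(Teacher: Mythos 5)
Your proposal is correct and follows exactly the paper's route: the authors likewise set $b=1/2$, $c=\sqrt5/2$ in \eqref{eq.f7u15am}, split the sum by parity of the index, expand via $2\alpha^m=L_m+F_m\sqrt5$, and separate rational and $\sqrt5$-parts with Lemma \ref{lem.ee72u15}, just as in Theorem \ref{thm.cfmxf53}. The parity bookkeeping and range remarks you flag are handled the same way, so nothing further is needed.
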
 
\begin{proof}
Set $b=1/2$ and $c=\sqrt5/2$ in \eqref{eq.f7u15am} and proceed as in the proof of Theorem \ref{thm.cfmxf53}.
\end{proof}

\section{A final example}

Another interesting family of infinite series  comes from choosing $a_n=(-1)^n$ 
or $a_{\lfloor n/k \rfloor} = (-1)^{\lfloor n/k \rfloor}$. As in this case 
$$
F(z)=\frac{1}{1+z},\quad |z|<1,
$$
we get immediately from Lemma \ref{fund_lem}
\begin{equation}\label{F+}
F^+(z,k) = \sum_{n=0}^\infty (-1)^{\lfloor n/k \rfloor} z^n = \frac{1-z^k}{(1-z)(1+z^k)}, \quad k\ge1.
\end{equation}

This leads to the next theorem.
\begin{theorem}
For integers $k\ge1$, $p\ge2$ and any integer $m$, we have
\begin{equation}\label{Final1}
\begin{split}
&\sum_{n=0}^{\infty} (-1)^{\lfloor n/k \rfloor} \frac{F_{n+m}}{p^{n+1}} \\
&= \frac{(p^{2k}-(-1)^k)(pF_m+F_{m-1})-p^k(pF_{k+m}+F_{k+m-1})+(-1)^mp^k(F_{k-m+1}-pF_{k-m})}{(p^2-p-1)\big(p^{2k}+p^kL_k+(-1)^k\big)},
\end{split}
\end{equation}
\begin{equation}\label{Final2}
\begin{split}
&\sum_{n=0}^{\infty} (-1)^{\lfloor n/k \rfloor} \frac{L_{n+m}}{p^{n+1}} \\
&=\frac{(p^{2k}-(-1)^k)(pL_m+L_{m-1})-p^k(pL_{k+m}+L_{k+m-1})-(-1)^m p^k(L_{k-m+1}- pL_{k-m})}{(p^2-p-1)\big(p^{2k}+p^kL_k+(-1)^k\big)}.
\end{split}
\end{equation}
\end{theorem}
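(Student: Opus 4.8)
The plan is to mimic the proof of Theorem~\ref{thm_1}: specialize the generating function \eqref{F+} at $z=\alpha/p$ and at $z=\beta/p$, then recombine via the Binet forms \eqref{binet}. Since $p\ge 2>\alpha$, both $|\alpha/p|<1$ and $|\beta/p|<1$, so the series converge and \eqref{F+} applies; moreover $(p-\alpha)(p-\beta)=p^2-p-1$ and $p^{2k}+p^{k}L_k+(-1)^k$ are both positive for $k\ge1$, $p\ge2$, so none of the specializations is singular. Evaluating the right-hand side of \eqref{F+} at $z=\alpha/p$, simplifying, and multiplying by $\alpha^m/p$ I would obtain
\begin{equation*}
\sum_{n=0}^{\infty}(-1)^{\lfloor n/k\rfloor}\frac{\alpha^{n+m}}{p^{n+1}}
=\frac{\alpha^m\bigl(p^{k}-\alpha^{k}\bigr)}{(p-\alpha)\bigl(p^{k}+\alpha^{k}\bigr)},
\end{equation*}
together with the analogous identity obtained by replacing $\alpha$ by $\beta$. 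Then \eqref{Final1} is $(\alpha-\beta)^{-1}$ times the difference of these two identities and \eqref{Final2} is their sum.

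Next I would clear the common denominator. Using $\alpha\beta=-1$ and $\alpha+\beta=1$, the denominator $(p-\alpha)(p-\beta)(p^{k}+\alpha^{k})(p^{k}+\beta^{k})$ becomes exactly $(p^2-p-1)(p^{2k}+p^{k}L_k+(-1)^k)$, matching \eqref{Final1}--\eqref{Final2}. The real work is the numerator. For the Lucas sum it is
\begin{equation*}
N_L=\alpha^m\bigl(p^{k}-\alpha^{k}\bigr)(p-\beta)\bigl(p^{k}+\beta^{k}\bigr)+\beta^m\bigl(p^{k}-\beta^{k}\bigr)(p-\alpha)\bigl(p^{k}+\alpha^{k}\bigr).
\end{equation*}
I would expand the eight terms in each of the two summands; after collecting, every monomial has the form $c\,p^{a}\bigl(\alpha^{u}\beta^{v}+\alpha^{v}\beta^{u}\bigr)$ with $c=\pm1$, which reduces to $\pm p^{a}L_{u-v}$ via $\alpha\beta=-1$ and the Binet form for $L_n$. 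Grouping by powers of $p$ should give
\begin{equation*}
N_L=(p^{2k}-(-1)^k)(pL_m+L_{m-1})-p^{k}(pL_{k+m}+L_{k+m-1})+(-1)^{k}p^{k}(pL_{m-k}+L_{m-k-1}),
\end{equation*}
and the corresponding difference (divided by $\alpha-\beta=\sqrt5$) the same expression with each $L$ replaced by $F$.

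The only delicate point is recasting the last term to match \eqref{Final1}--\eqref{Final2}. Using $L_{-n}=(-1)^nL_n$ gives $L_{m-k}=(-1)^{k-m}L_{k-m}$ and $L_{m-k-1}=(-1)^{k-m+1}L_{k-m+1}$, so
\begin{equation*}
(-1)^{k}p^{k}(pL_{m-k}+L_{m-k-1})=(-1)^{m}p^{k}\bigl(pL_{k-m}-L_{k-m+1}\bigr)=-(-1)^{m}p^{k}\bigl(L_{k-m+1}-pL_{k-m}\bigr),
\end{equation*}
which is the form in \eqref{Final2}; the parallel manipulation with $F_{-n}=(-1)^{n-1}F_n$ turns the Fibonacci trailing term into $+(-1)^{m}p^{k}(F_{k-m+1}-pF_{k-m})$, so the differing negative-subscript conventions for $F$ and $L$ are precisely what produce the opposite sign in the last term of \eqref{Final1} versus \eqref{Final2}. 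I expect the parity bookkeeping in these last simplifications to be the main obstacle; the rest is routine expansion and regrouping.
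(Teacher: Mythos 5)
Your proposal is correct and follows exactly the paper's own route: substitute $z=\alpha/p$ and $z=\beta/p$ into \eqref{F+}, multiply by $\alpha^m/p$ (resp.\ $\beta^m/p$), and take the difference over $\alpha-\beta$ for \eqref{Final1} and the sum for \eqref{Final2}. Your expansion of the numerator and the negative-index bookkeeping (which is where the sign discrepancy between the last terms of \eqref{Final1} and \eqref{Final2} comes from) check out, and in fact you supply more detail than the paper does.
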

\begin{proof}
Use of $z=\alpha/p$ and  $z=\beta/p$ with $p\geq2$, in turn, in \eqref{F+}. Then difference and additions the resulting identities gives  of 
\eqref{Final1} and \eqref{Final2}, respectively.
\end{proof}
\begin{example} 
For integers $k\ge1$ and any integer $m$,
\begin{align*}
\sum_{n=0}^{\infty} (-1)^{\lfloor n/k \rfloor} \frac{F_{n+m}}{2^{n+1}} &= \frac{(4^{k}-(-1)^k)F_{m+2}-2^k F_{k+m+2} -(-1)^m 2^kF_{k-m-2}}{4^{k}+2^k L_k+(-1)^k},\\
\sum_{n=0}^{\infty} (-1)^{\lfloor n/k \rfloor} \frac{L_{n+m}}{2^{n+1}} &= \frac{(4^{k}-(-1)^k)L_{m+2}-2^k L_{k+m+2} + (-1)^m 2^kL_{k-m-2}}{4^{k}+2^k L_k+(-1)^k},\\
\sum_{n=0}^{\infty} (-1)^{\lfloor n/k \rfloor} \frac{F_{n+m}}{3^{n+1}} &= \frac{(9^{k}-(-1)^k)L_{m+1}-3^k L_{k+m+1} -(-1)^m 3^k L_{k-m-1}}{5(9^{k}+3^k L_k+(-1)^k)},\\
\sum_{n=0}^{\infty} (-1)^{\lfloor n/k \rfloor} \frac{L_{n+m}}{3^{n+1}} &= \frac{(9^{k}-(-1)^k)F_{m+1}-3^k F_{k+m+1} {+} (-1)^m 3^k F_{k-m-1}}{9^{k}+3^k L_k+(-1)^k}.
\end{align*}
\end{example}

For the finite binomial sum
\begin{equation*}
v_n = \sum_{j=0}^n(-1)^{\lfloor j/k \rfloor}  \binom {n}{j} b^{n-j} c^j 
\end{equation*}
with $b$ and $c$ real, the ordinary generating function is given by
\begin{align*}
S_v^* (z,k)  = \frac{ 1-\left (\frac{cz}{1-bz}\right )^k }{1-(b+c)z} \frac{1}{1 + \left (\frac{cz}{1-bz}\right )^k} = \frac{1}{1-(b+c)z}\left ( 1 -  \frac{2(cz)^k}{(1-bz)^k + (cz)^k} \right).
\end{align*}

Now the same analysis as above applies.  
\begin{theorem}\label{thm_bin3}
For nonzero real numbers $b$ and $c$ and $n\geq 2$ we have
\begin{equation}\label{main_bin_id3}
\sum_{j=0}^n (-1)^{\lfloor j/2 \rfloor} \binom {n}{j} b^{n-j} c^j = \frac{1}{2} (1-i)(b+ic)^{n} + \frac{1}{2} (1+i) (b-ic)^{n},
\end{equation}
where $i$ is the imaginary unit. Equivalently, using the polar form, we get the expression
\begin{equation*} 
\sum_{j=0}^n (-1)^{\lfloor {j/2} \rfloor}\binom {n}{j} b^{n-j} c^j  = \sgn b\,
\sqrt{2(b^2 + c^2 )^n} \cos \left( {n\arctan  {\frac{c}{b}} - \frac{\pi }{4}} \right).
\end{equation*}
\end{theorem}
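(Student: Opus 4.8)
The plan is to specialise the generating function $S_v^*(z,k)$ of $(v_n)_{n\ge 0}$ displayed just above the statement to the case $k=2$, reduce it to a rational function with a quadratic denominator, decompose it by partial fractions over $\mathbb{C}$, and extract the coefficient of $z^n$ as a geometric series -- this is the ``same analysis as above'' alluded to in the text.

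Concretely, putting $k=2$ gives
\[
S_v^*(z,2)=\frac{1}{1-(b+c)z}\cdot\frac{(1-bz)^2-(cz)^2}{(1-bz)^2+(cz)^2},
\]
and since $(1-bz)^2-(cz)^2=(1-(b+c)z)(1-(b-c)z)$, the factor $1-(b+c)z$ cancels, leaving
\[
S_v^*(z,2)=\frac{1-(b-c)z}{1-2bz+(b^2+c^2)z^2}=\frac{1-(b-c)z}{(1-(b+ic)z)(1-(b-ic)z)}.
\]
I would then write this as a sum $\frac{A}{1-(b+ic)z}+\frac{\bar A}{1-(b-ic)z}$ (the second coefficient is forced to be $\bar A$ because the left-hand side has real coefficients and $b-ic=\overline{b+ic}$), clear denominators, set $z=1/(b+ic)$, and simplify using $i^2=-1$ to obtain $A=\tfrac12(1-i)$. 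Expanding each summand geometrically, the coefficient of $z^n$ in $S_v^*(z,2)$ is $\tfrac12(1-i)(b+ic)^n+\tfrac12(1+i)(b-ic)^n$; since this coefficient equals $v_n$, identity \eqref{main_bin_id3} follows.

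For the polar form, I would observe that the two terms on the right of \eqref{main_bin_id3} are complex conjugates, so their sum equals $2\operatorname{Re}\!\bigl(\tfrac12(1-i)(b+ic)^n\bigr)$. Using $\tfrac12(1-i)=\tfrac1{\sqrt2}e^{-i\pi/4}$ together with $b+ic=(\sgn b)\sqrt{b^2+c^2}\,e^{i\arctan(c/b)}$ (valid because $(b+ic)/|b|=(\sgn b)\bigl(1+i\tfrac cb\bigr)$ has modulus $\sqrt{b^2+c^2}/|b|$ and argument $\arctan(c/b)$), De Moivre's formula turns the sum into $(\sgn b)^n\sqrt{2(b^2+c^2)^n}\,\cos\!\bigl(n\arctan(c/b)-\tfrac\pi4\bigr)$, which for $b>0$ is the displayed expression.

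There is no genuine difficulty here; the only thing requiring care is bookkeeping the powers of $i$ in the partial-fraction step and the sign prefactor in the trigonometric form. It is worth noting that \eqref{main_bin_id3} also admits a proof avoiding generating functions entirely: the sequence $\bigl((-1)^{\lfloor j/2\rfloor}\bigr)_{j\ge0}$ is $4$-periodic with pattern $1,1,-1,-1$, which is exactly $\operatorname{Re}\!\bigl((1-i)i^j\bigr)$; hence by the binomial theorem $\sum_{j=0}^n(-1)^{\lfloor j/2\rfloor}\binom nj b^{n-j}c^j=\operatorname{Re}\!\bigl((1-i)(b+ic)^n\bigr)=\tfrac12(1-i)(b+ic)^n+\tfrac12(1+i)(b-ic)^n$.
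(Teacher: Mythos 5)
Your proof is correct, and it follows the same generating-function route as the paper but streamlines it considerably. The paper expands $1/((1-bz)^2+(cz)^2)$ as a series, forms the Cauchy product with $1/(1-(b+c)z)$, and then sums geometric series and regroups powers of $i$ over the better part of a page. You instead observe that $(1-bz)^2-(cz)^2=(1-(b+c)z)(1-(b-c)z)$, so the factor $1-(b+c)z$ cancels and $S_v^*(z,2)$ collapses to $\frac{1-(b-c)z}{(1-(b+ic)z)(1-(b-ic)z)}$, after which a two-term partial fraction with $A=\tfrac12(1-i)$ gives the coefficient of $z^n$ immediately; this cancellation is the real saving. Your closing observation that $(-1)^{\lfloor j/2\rfloor}=\operatorname{Re}\bigl((1-i)i^j\bigr)$, so the identity is just the real part of $(1-i)(b+ic)^n$ via the binomial theorem, is essentially the paper's own follow-up Remark (where it appears as $h_1(i;b,c,n)+h_2(i;b,c,n)$). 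One point worth making explicit: your polar form carries the prefactor $(\sgn b)^n$, whereas the theorem displays $\sgn b$; your version is the correct one in general (e.g., $b=-1$, $c=0$, $n=2$ gives $1$ on the left but $-1$ from the displayed polar form), so the statement's polar form should be read as requiring $b>0$ or $n$ odd, exactly as your caveat indicates.
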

\begin{proof}
As
\begin{equation*}
\frac{1}{(1-bz)^2 + (cz)^2} = \frac{1}{2c} \sum_{n=0}^\infty \big((b+ic)^n (c-ib) + (b-ic)^n (c+ib)\big) z^n,
\end{equation*}
we have
\begin{align*}
S_v^* (z,2) & =  \sum_{n=0}^\infty (b+c)^n z^n - c z^2 \sum_{n=0}^\infty (b+c)^n z^n  \sum_{n=0}^\infty \left ((b+ic)^n (c-ib) + (b-ic)^n (c+ib)\right ) z^n \\
& =  (b+c)^0 + (b+c)^1 z \\
& \quad + \sum_{n=2}^\infty \left ( (b+c)^n - c \sum_{j=0}^{n-2} (b+c)^j \left ( (b+ic)^{n-2-j} (c-ib) + (b-ic)^{n-2-j} (c+ib)\right )\right )\! z^n.
\end{align*}
This shows that
\begin{align*}
\sum_{j=0}^n (-1)^{\lfloor j/2 \rfloor} &\binom {n}{j} b^{n-j} c^j\\  
&= (b+c)^n - c \sum_{j=0}^{n-2} (b+c)^j \left ( (b+ic)^{n-2-j} (c-ib) + (b-ic)^{n-2-j} (c+ib)\right ).
\end{align*}
The last sum can be rewritten as
\begin{align*}
& \sum_{j=0}^{n-2} (b+c)^j \left ( (b+ic)^{n-2-j} (c-ib) + (b-ic)^{n-2-j} (c+ib)\right ) \\
& \quad = (b+ic)^{n-2} (c-ib) \sum_{j=0}^{n-2} \left ( \frac{b+c}{b+ic}\right )^j 
+ (b-ic)^{n-2} (c+ib) \sum_{j=0}^{n-2} \left ( \frac{b+c}{b-ic}\right )^j \\
& \quad = \frac{c-ib}{c(1-i)}\left ( (b+c)^{n-1} - (b+ic)^{n-1}\right ) + \frac{c+ib}{c(1+i)}\left ( (b+c)^{n-1} - (b-ic)^{n-1}\right ) \\
& \quad = \frac{(b+c)^n}{c} - \frac{1}{2} (b+ic)^{n-1} \left ( 1 + \frac{b}{c} + i \left ( 1-\frac{b}{c}\right ) \right ) 
- \frac{1}{2} (b-ic)^{n-1} \left ( 1+\frac{b}{c} - i \left ( 1-\frac{b}{c}\right ) \right ) \\
& \quad = \frac{(b+c)^n}{c} - \frac{b+c}{2c} \left ((b+ic)^{n-1} + (b-ic)^{n-1}\right ) + i \frac{b-c}{2c} \left ((b+ic)^{n-1} - (b-ic)^{n-1}\right ).
\end{align*}
This gives
\begin{align*}
\sum_{j=0}^n (-1)^{\lfloor\frac{j}{2} \rfloor}  \binom {n}{j} b^{n-j} c^j =
\frac{b+c}{2}\! \left ((b+ic)^{n-1} + (b-ic)^{n-1}\right )\! - i \frac{b-c}{2} \left ( (b+ic)^{n-1}\! - (b-ic)^{n-1} \right )\!,
\end{align*}
and from here
\begin{align*}
\sum_{j=0}^n(-1)^{\lfloor j/2 \rfloor}  \binom {n}{j} b^{n-j} c^j & =  \frac{b}{2}(1-i) (b+ic)^{n-1} + \frac{c}{2} (1+i) (b+ic)^{n-1} \\
& \quad + \frac{b}{2}(1+i) (b-ic)^{n-1} + \frac{c}{2} (1-i) (b-ic)^{n-1} \\
& = \frac{(b+ic)^n}{b^2+c^2}\left ( \frac{b}{2}(1-i) (b-ic) + \frac{c}{2} (1+i) (b-ic) \right ) \\
& \quad + \frac{(b-ic)^n}{b^2+c^2}\left ( \frac{b}{2}(1+i) (b+ic) + \frac{c}{2} (1-i) (b+ic) \right )
\end{align*}
which is the stated formula.
\end{proof}
\begin{remark}
It is worth noting that a very short proof of Theorem \ref{thm_bin3}  goes as follows. Recall
\begin{equation*}
h_1 (x;b,c,n) = \sum_{j = 0}^{\left\lfloor {n/2} \right\rfloor } \binom {n} {2j} b^{n - 2j} c^{2j} x^{2j} 
= \frac{(b + cx)^n + (b - cx)^n}{2},
\end{equation*}
\begin{equation*}
h_2 (x;b,c,n) = \sum_{j = 1}^{\left\lceil {n/2} \right\rceil } \binom {n} {2j-1} b^{n - 2j + 1} c^{2j - 1} x^{2j - 2} 
= \frac{(b + cx)^n - (b - cx)^n}{2x}.
\end{equation*}
Therefore,
\begin{align*}
\sum_{j = 0}^n (-1)^{\left\lfloor {j/2} \right\rfloor } \binom {n} {j} b^{n - j} c^j  & =  h_1 (i;b,c,n) + h_2 (i;b,c,n) \\
& =  \frac{1}{2}(1 - i)(b + ic)^n  + \frac{1}{2}(1 + i)(b - ic)^n.
\end{align*} 
\end{remark}
\begin{corollary}
For $n\geq 2$ we have
\begin{gather*}
\sum_{j=0}
^n (-1)^{\lfloor j/2 \rfloor} \binom {n}{j} = 2^{(n+1)/2} \cos \frac{(n-1)\pi}{4},\\
\sum_{j=0}^n (-1)^{\lfloor 3j/2 \rfloor + 1} \binom {n}{j}  = 2^{(n+1)/2} \sin \frac{(n-1)\pi}{4}.
\end{gather*}
\end{corollary}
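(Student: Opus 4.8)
The plan is to read off both identities directly from Theorem~\ref{thm_bin3} by making the right choices of $b$ and $c$, together with one elementary floor identity and one trigonometric shift.

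First I would establish the first formula by specializing the polar form of Theorem~\ref{thm_bin3} to $b=c=1$. Then $b^2+c^2=2$, $\sgn b=1$ and $\arctan(c/b)=\pi/4$, so its right-hand side becomes $\sqrt{2\cdot 2^n}\,\cos\!\big(n\pi/4-\pi/4\big)=2^{(n+1)/2}\cos\frac{(n-1)\pi}{4}$, which is exactly the claimed closed form.

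For the second formula I would first rewrite the exponent using the elementary fact $\lfloor 3j/2\rfloor=j+\lfloor j/2\rfloor$, valid for every integer $j\ge 0$ (check the cases $j$ even and $j$ odd separately). Hence $(-1)^{\lfloor 3j/2\rfloor+1}=-(-1)^{j}(-1)^{\lfloor j/2\rfloor}$, so that
\[
\sum_{j=0}^n(-1)^{\lfloor 3j/2\rfloor+1}\binom nj=-\sum_{j=0}^n(-1)^{\lfloor j/2\rfloor}\binom nj\, 1^{\,n-j}(-1)^j .
\]
Now apply Theorem~\ref{thm_bin3} with $b=1$ and $c=-1$: here $b+ic=1-i=\sqrt2\,e^{-i\pi/4}$ and $b-ic=1+i=\sqrt2\,e^{i\pi/4}$, so the algebraic form of the theorem evaluates the right-hand sum to $\frac12(1-i)^{n+1}+\frac12(1+i)^{n+1}=2^{(n+1)/2}\cos\frac{(n+1)\pi}{4}$. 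Finally, I would invoke the shift $-\cos\frac{(n+1)\pi}{4}=\sin\!\big(\frac{(n+1)\pi}{4}-\frac{\pi}{2}\big)=\sin\frac{(n-1)\pi}{4}$ to obtain the stated expression.

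The only (quite modest) obstacle is bookkeeping: getting the sign conventions right in the reduction $(-1)^{\lfloor 3j/2\rfloor+1}=-(-1)^j(-1)^{\lfloor j/2\rfloor}$ and performing the trigonometric shift $\sin(x-\pi/2)=-\cos x$ correctly. Everything else is a direct substitution into Theorem~\ref{thm_bin3}; no new convergence, combinatorial, or analytic input is required, and the hypothesis $n\ge 2$ is inherited verbatim from that theorem.
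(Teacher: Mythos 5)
Your proposal is correct and follows the paper's own route: specialize Theorem~\ref{thm_bin3} to $b=c=1$ for the first identity and to $(b,c)=(1,-1)$ for the second, using $\lfloor 3j/2\rfloor=j+\lfloor j/2\rfloor$ and the shift $\sin\frac{(n-1)\pi}{4}=-\cos\frac{(n+1)\pi}{4}$. The paper phrases the last step via "the complex sine function," but the computation is the same.
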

\begin{proof}
Set $b=c$ in Theorem \ref{thm_bin3} to get
\begin{align*}
\sum_{j=0}^n (-1)^{\lfloor j/2 \rfloor}\binom {n}{j}  & =  (1+i)^{n-1} + (1-i)^{n-1} \\
& =  2^{(n-1)/2} \left ( e^{\pi i (n-1)/4} + e^{-\pi i (n-1)/4} \right ) =  2^{(n-1)/2}  2 \cos \frac{(n-1)\pi}{4},
\end{align*}
as desired. Equivalently, one can use $\arctan 1=\pi/4$. The second identity is proved in the same manner working with $(b,c)=(1,-1)$ 
and using the definition of the complex sine function.
\end{proof}
\begin{corollary}
Let $n$, $r$ and $s$ be non-negative integers such that $n\ge r\ge s$.  
Let $b$ and $c$ be nonzero real or complex numbers. Then
\begin{equation}\label{eq.rf08l60}
\sum_{j = 0}^n {( - 1)^{\left\lfloor {j/2} \right\rfloor } \binom{n - r}{j - s} b^{n - j - r+s} c^{j - s} }  = \frac{{1 - i}}{2}i^s (b + ic)^{n - r }  + \frac{{1 + i}}{2}( - i)^s (b - ic)^{n - r}.
\end{equation}
\end{corollary}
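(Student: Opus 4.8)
The plan is to derive \eqref{eq.rf08l60} from the base identity \eqref{main_bin_id3} of Theorem \ref{thm_bin3} by repeated partial differentiation, exactly in the spirit in which \eqref{eq.rttgpmv} was obtained from \eqref{main_bin_id}. First I would observe that both sides of \eqref{main_bin_id3} are polynomials in the two variables $b$ and $c$ — the right-hand side is real, being invariant under $i\mapsto-i$ — so the identity may be differentiated termwise with respect to $b$ and $c$ as often as we please.

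Next I would apply $\partial^{\,r}/\partial b^{\,r}$ and $\partial^{\,s}/\partial c^{\,s}$ to \eqref{main_bin_id3}. On the left the $c$-derivatives annihilate every term with $j<s$ and replace $c^{j}$ by $\tfrac{j!}{(j-s)!}c^{j-s}$, while the $b$-derivatives annihilate every term with $j>n-r$ and replace $b^{n-j}$ by $\tfrac{(n-j)!}{(n-j-r)!}b^{n-j-r}$; multiplying by $\binom{n}{j}$, the surviving $j$-dependent coefficient collapses to
\[
\binom{n}{j}\,\frac{j!}{(j-s)!}\,\frac{(n-j)!}{(n-j-r)!}=\frac{n!}{(j-s)!\,(n-j-r)!}=\frac{n!}{(n-r-s)!}\binom{n-r-s}{j-s}.
\]
On the right I would use $\partial^{\,s}/\partial c^{\,s}(b\pm ic)^{n}=(\pm i)^{s}\tfrac{n!}{(n-s)!}(b\pm ic)^{n-s}$ and then $\partial^{\,r}/\partial b^{\,r}$, which brings down the factor $(n-s)!/(n-s-r)!$, so that the entire right-hand side becomes $\tfrac{n!}{(n-r-s)!}$ times $\tfrac{1-i}{2}i^{s}(b+ic)^{n-r-s}+\tfrac{1+i}{2}(-i)^{s}(b-ic)^{n-r-s}$.

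Cancelling the common factor $n!/(n-r-s)!$ from both sides yields the claimed identity but with $\binom{n-r-s}{j-s}$, $b^{n-j-r}$ and exponents $n-r-s$ on the right; applying this with $n$ replaced by $n+s$ (a harmless relabelling, since the parameter was arbitrary) turns these into $\binom{n-r}{j-s}$, $b^{n-j-r+s}$ and $n-r$, which is precisely \eqref{eq.rf08l60}. The only points requiring care are the bookkeeping that collapses the three factorials into the stated binomial coefficient, and the remark that after the relabelling the effective summation range $s\le j\le n$ is captured by $\sum_{j=0}^{n}$ exactly because the hypothesis $n\ge r\ge s$ forces $\binom{n-r}{j-s}$ to vanish outside that range; there is no genuine obstacle here beyond routine computation.
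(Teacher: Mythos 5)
Your proof is correct and is essentially the paper's own argument: the paper's proof is exactly "differentiate \eqref{main_bin_id3} $r$ times with respect to $b$ and $s$ times with respect to $c$." You have merely made explicit the factorial bookkeeping and the relabelling $n\mapsto n+s$ (together with the vanishing of $\binom{n-r}{j-s}$ outside $s\le j\le n-r+s$) that the paper leaves implicit.
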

\begin{proof}
Differentiate \eqref{main_bin_id3} $r$ times with respect to $b$ and $s$ times with respect to $c$.
\end{proof}
The polar form of \eqref{eq.rf08l60} is
\begin{align*}
\sum_{j = 0}^n ( - 1)^{\left\lfloor {j/2} \right\rfloor } &\binom{n - r}{j - s} b^{n - j - r+s} c^{j - s} \\
&\quad  = \sgn{b}\,\sqrt{2(b^2  + c^2 )^{n - r } } \cos \left( {(n - r )\arctan \frac{c}{b} + \frac{(2s-1)\pi }{4}} \right).
\end{align*}
\begin{corollary}
Let $n$, $r$ and $s$ be non-negative integers such that $n\ge r \ge s$. 
Then
\begin{gather*}
\sum_{j = 0}^n {( - 1)^{\left\lfloor {j/2} \right\rfloor }\binom{n - r}{j - s} } = 2^{(n  - r + 1)/2} \cos \left( {\frac{\pi }{4}(n + 2s - r - 1)} \right),\\
\sum_{j = 0}^n {( - 1)^{\left\lfloor {3j/2} \right\rfloor}} \binom{n - r}{j - s} = (-1)^s\,2^{(n - r + 1)/2} \cos \left( {\frac{\pi }{4}(n - 2s - r + 1)} \right).
\end{gather*}
\end{corollary}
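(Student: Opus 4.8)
The plan is to obtain both sums as specializations of the polar form of the preceding corollary, i.e. of the identity displayed immediately after \eqref{eq.rf08l60}, evaluated at $(b,c)=(1,1)$ and at $(b,c)=(1,-1)$. The only preliminary observation needed is the elementary parity identity $\lfloor 3j/2\rfloor = j + \lfloor j/2\rfloor$, valid for every integer $j$ (check $j$ even and $j$ odd separately), which gives $(-1)^{\lfloor 3j/2\rfloor} = (-1)^{j}(-1)^{\lfloor j/2\rfloor}$; this is what allows the second sum to be read off from the same master formula as the first.

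For the first identity I would set $b=c=1$ in the polar form of \eqref{eq.rf08l60}. Then $\sgn b = 1$, $b^2+c^2 = 2$, and $\arctan(c/b) = \pi/4$, so the right-hand side collapses to $\sqrt{2\cdot 2^{n-r}}\,\cos\!\big((n-r)\tfrac{\pi}{4} + \tfrac{(2s-1)\pi}{4}\big) = 2^{(n-r+1)/2}\cos\!\big(\tfrac{\pi}{4}(n+2s-r-1)\big)$, which is precisely the claimed closed form, while the left-hand side is literally $\sum_{j=0}^n (-1)^{\lfloor j/2\rfloor}\binom{n-r}{j-s}$.

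For the second identity I would set $b=1$, $c=-1$ in the same polar form. Now $\arctan(c/b) = -\pi/4$, the prefactor is again $2^{(n-r+1)/2}$, and the cosine becomes $\cos\!\big(-(n-r)\tfrac{\pi}{4} + \tfrac{(2s-1)\pi}{4}\big)$, which equals $\cos\!\big(\tfrac{\pi}{4}(n-2s-r+1)\big)$ by evenness of cosine. On the left one gets $\sum_{j=0}^n (-1)^{\lfloor j/2\rfloor}(-1)^{j-s}\binom{n-r}{j-s} = (-1)^{-s}\sum_{j=0}^n (-1)^{\lfloor 3j/2\rfloor}\binom{n-r}{j-s}$ by the parity identity above, and since $(-1)^{-s}=(-1)^s$, multiplying through by $(-1)^s$ produces the stated formula.

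There is essentially no genuine obstacle here beyond bookkeeping: the two points requiring a moment's care are the branch of $\arctan$ when $c<0$ (handled by evenness of cosine) and keeping the factor $(-1)^s$ straight when passing between $(-1)^{\lfloor 3j/2\rfloor}$ and $(-1)^{j}(-1)^{\lfloor j/2\rfloor}$. As a fallback not relying on the polar form, one could instead substitute $(b,c)=(1,1)$ and $(b,c)=(1,-1)$ directly into \eqref{eq.rf08l60}, expand $(1\pm i)^{\,n-r}$, $i^{s}$ and $(-i)^{s}$ via Euler's formula, and combine the two conjugate terms; this yields the same expressions.
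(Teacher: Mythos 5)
Your proposal is correct and follows essentially the route the paper intends: the corollary is a direct specialization of \eqref{eq.rf08l60} (equivalently its polar form) at $(b,c)=(1,1)$ and $(b,c)=(1,-1)$, exactly as the paper's earlier unshifted corollary was obtained from Theorem \ref{thm_bin3}. Your verification of the angle bookkeeping, the branch $\arctan(-1)=-\pi/4$, and the parity identity $\lfloor 3j/2\rfloor = j+\lfloor j/2\rfloor$ (which accounts for the factor $(-1)^s$) is accurate.
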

\begin{corollary}
Let $n$, $r$ and $s$ be non-negative integers such that $n\ge r \ge s$. 
Let $b$ and $c$ be non-zero real numbers. Then
\begin{gather}\label{eq.rg96dg0}
\sum_{j = 0}^{\left\lfloor {n/2} \right\rfloor } {\binom{n - r}{2j - s}b^{n - 2j - r+s} c^{2j - s} } =\frac12\big( (b + c)^{n - r }  + ( - 1)^s (b - c)^{n - r}\big),\\
\label{eq.qmk188v}
\sum_{j = 1}^{\left\lfloor {n/2} \right\rfloor } {\binom{n - r}{2j - s + 1}b^{n - 2j - 1 - r+s} c^{2j + 1 - s} } = \frac{1}{2}\big((b + c)^{n - r }  - ( - 1)^s (b - c)^{n - r}\big). 
\end{gather}
\end{corollary}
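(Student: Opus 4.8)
The plan is to recognize \eqref{eq.rg96dg0} and \eqref{eq.qmk188v} as nothing more than the even- and odd-indexed ``bisections'' of the binomial theorem for the exponent $n-r$, with the summation index merely shifted by $s$. So the only tool needed is the ordinary binomial theorem together with the elementary parity identity $\tfrac12\big(1+(-1)^{s+k}\big)$, which equals $1$ when $k\equiv s\pmod 2$ and $0$ otherwise.

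First I would write, for the non-negative exponent $n-r$,
\[
(b+c)^{\,n-r}=\sum_{k=0}^{n-r}\binom{n-r}{k}b^{\,n-r-k}c^{\,k},\qquad (b-c)^{\,n-r}=\sum_{k=0}^{n-r}(-1)^k\binom{n-r}{k}b^{\,n-r-k}c^{\,k}.
\]
Forming the combination $\tfrac12\big((b+c)^{n-r}+(-1)^s(b-c)^{n-r}\big)$, the scalar multiplying $\binom{n-r}{k}b^{\,n-r-k}c^{\,k}$ collapses to $\tfrac12\big(1+(-1)^{s+k}\big)$, which is $1$ precisely when $k\equiv s\pmod 2$ and $0$ otherwise; hence
\[
\tfrac12\big((b+c)^{n-r}+(-1)^s(b-c)^{n-r}\big)=\sum_{\substack{0\le k\le n-r\\ k\equiv s\ (\mathrm{mod}\ 2)}}\binom{n-r}{k}b^{\,n-r-k}c^{\,k}.
\]
Next I would substitute $k=2j-s$, so that $b^{\,n-r-k}=b^{\,n-2j-r+s}$ and $c^{\,k}=c^{\,2j-s}$, and then check the range: with the standard convention $\binom{N}{k}=0$ for $k<0$ or $k>N$, every $j$ outside the effective window contributes nothing, and the hypothesis $n\ge r\ge s$ guarantees that letting $j$ run from $0$ to $\lfloor n/2\rfloor$ already collects all the surviving terms. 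This yields \eqref{eq.rg96dg0}. Identity \eqref{eq.qmk188v} comes out the same way from the complementary combination $\tfrac12\big((b+c)^{n-r}-(-1)^s(b-c)^{n-r}\big)$, which by the same parity identity equals the sum over $k\not\equiv s\pmod 2$; the substitution $k=2j-s+1$ then reshapes it into the displayed sum.

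An alternative route, closer in spirit to the derivations of the preceding corollaries, is to start from the $r=s=0$ instances — the classical splitting of $(b+c)^n$ into its even and odd parts, which is exactly $h_1(1;b,c,n)$ and $h_2(1;b,c,n)$ from the earlier remark — and differentiate $r$ times with respect to $b$ and $s$ times with respect to $c$. Both sides then acquire the common factor $n!/(n-r-s)!$, which cancels, and a harmless relabelling $n\mapsto n+s$ brings the result into the stated form; this variant needs $n\ge r+s$, while the direct parity argument above works as soon as $n\ge r\ge s$. In either approach I do not expect a genuine obstacle: the content is the binomial theorem, and the only place warranting a moment's care is the bookkeeping of the summation limits, i.e.\ confirming under $n\ge r\ge s$ that the displayed ranges pick up exactly the surviving terms (any extra terms being identically zero).
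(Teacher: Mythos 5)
Your proof is correct in substance but follows a genuinely different route from the paper. The paper obtains this corollary by substituting $ic$ for $c$ in the preceding identity \eqref{eq.rf08l60} (itself produced by differentiating the $(-1)^{\lfloor j/2\rfloor}$ binomial formula) and separating real and imaginary parts; your primary argument instead bypasses the complex machinery entirely and reads the two displays as the parity bisections of the binomial theorem for exponent $n-r$, using only $\tfrac12\bigl(1+(-1)^{s+k}\bigr)$ as an indicator of $k\equiv s\pmod 2$. That is more elementary and self-contained, and it makes the hypothesis $n\ge r\ge s$ do exactly the work you say it does (guaranteeing the displayed range of $j$ captures every surviving term of \eqref{eq.rg96dg0}). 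Your alternative route via differentiating the $r=s=0$ case is essentially the paper's own methodology for the neighbouring corollaries, and your remark that it needs $n\ge r+s$ rather than $n\ge r\ge s$ is a fair observation.

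One point you glide over too quickly: you assert that the substitution $k=2j-s+1$ ``reshapes'' the odd bisection into the displayed sum \eqref{eq.qmk188v}, but that sum starts at $j=1$, and your own computation shows the term $j=0$, namely $\binom{n-r}{1-s}b^{\,n-1-r+s}c^{\,1-s}$, is genuinely nonzero when $s\in\{0,1\}$ (e.g.\ $n=2$, $r=s=0$ gives left side $0$ against right side $2bc$ as printed). So your argument in fact establishes \eqref{eq.qmk188v} with the lower limit $j=0$, which is also what the paper's own real/imaginary-part derivation yields; the printed lower limit $j=1$ is an error in the statement. You should say this explicitly rather than let the relabelling silently absorb it — as written, your final sentence claims agreement with a display that your method shows needs correcting.
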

\begin{proof}
Assume that $b$ and $c$ are real numbers. Write $ic$ for $c$ in~\eqref{eq.rf08l60} and compare real and imaginary parts of both sides.
\end{proof}
\begin{example}
Choosing $b=1/2$, $c=\sqrt 5/2$ in \eqref{eq.rg96dg0} and \eqref{eq.qmk188v} 
 gives for $n\ge r \ge s$: 
\begin{gather*}
\sum_{j = 1}^{\left\lfloor {n/2} \right\rfloor } \binom{n - r}{2j - s}\,5^j  = 2^{n - r - 1}\, 5^{(s + 1)/2} F_{n - r } ,\\
\sum_{j = 1}^{\left\lfloor {n/2} \right\rfloor } {\binom{n  - r}{2j + 1 - s}\,5^j }  = 2^{n - r  -1}\, 5^{(s - 1)/2} L_{n - r}.
\end{gather*}
\end{example}

\section{Concluding comments}

This paper was about studying infinite series and binomial sums involving a general floor sequence $(a_{\lfloor n/k \rfloor})_{n\geq 0}$ 
which we termed the dual sequence corresponding to $(a_n)_{n\geq 0}$. In the first part we derived some properties of 
infinite series involving $(a_{\lfloor n/k \rfloor})_{n\geq 0}$ and evaluated these series in closed form at certain Fibonacci (Lucas) arguments. As a byproduct we have also established a apparently new expression for the polylogarithm. 

In the second part of the study we have studied finite binomial sums, hereby focusing on three particular classes of sequences, namely, $$a_{\lfloor n/k \rfloor} = \lfloor n/k \rfloor,\quad
a_{\lfloor n/k \rfloor} = \lfloor n/k \rfloor^2 \quad \text{ and } \quad a_{\lfloor n/k \rfloor} = (-1)^{\lfloor n/k \rfloor}.$$
Based on three general theorems (Theorem \ref{thm_bin}, Theorem \ref{thm_bin2} and Theorem \ref{thm_bin3}) a range of new Fibonacci (Lucas) binomial sums were evaluated exactly. As was pointed out in the main text there are still gaps to fill which we leave for the readers' personal exploration. 

We cannot resist to close the article with another obvious class of sequences. 
This class comes from choosing $a_n=F_n$ and $a_n=L_n$, respectively. Using well-known generating functions  
$$\sum_{n=0}^{\infty}F_n z^n=\frac{z}{1-z-z^2}\quad \text{and} \quad \sum_{n=0}^{\infty}L_n z^n=\frac{2-z}{1-z-z^2}, $$
we get from Lemma \ref{fund_lem} for integers $k\ge1$ and $|z|<|\beta|$
\begin{equation*}
\sum_{n=0}^\infty F_{\lfloor n/k \rfloor} z^n = \frac{(1-z^k)z^k}{(1-z)(1-z^k-z^{2k})},\qquad
\sum_{n=0}^\infty L_{\lfloor n/k \rfloor} z^n = \frac{(1-z^k)(2-z^k)}{(1-z)(1-z^k-z^{2k})}.
\end{equation*}

This produces the next results.
\begin{theorem}
For integers $k\ge1$, $p\ge2$ and integer $m$, we have
\begin{align*}
D_k(p)\sum_{n=0}^{\infty} & \frac{F_{\lfloor n/k \rfloor}F_{n+m}}{p^{n+1}} =p^{3k+1}F_{k+m}+p^{3k}F_{k+m-1}\\
	&\quad -p^{2k+1}\big((-1)^kF_{m}+F_{2k+m}\big) - p^{2k}\big((-1)^kF_{m-1}+F_{2k+m-1}\big)\\
	&\quad +(-1)^kp^{k+1}\big((-1)^mF_{k-m}+F_{k+m}\big)-(-1)^k p^k \big((-1)^m F_{k-m+1}-F_{k+m-1}\big)\\
	&\quad + pF_{m} + F_{m-1},\\
D_k(p)\sum_{n=0}^{\infty}&  \frac{F_{\lfloor n/k \rfloor}L_{n+m}}{p^{n+1}} =p^{3k+1}L_{k+m}+p^{3k}L_{k+m-1}\\
&\quad -p^{2k+1}((-1)^kL_{m}+L_{2k+m}) - p^{2k}((-1)^kL_{m-1}+L_{2k+m-1})\\
&\quad- (-1)^{k}p^{k+1}((-1)^mL_{k-m} - L_{k+m})+(-1)^k p^k ((-1)^m L_{k-m+1}+L_{k+m-1})\\
&\quad + pL_{m} + L_{m-1},\\
D_k(p)\sum_{n=0}^{\infty} & \frac{L_{\lfloor n/k \rfloor}F_{n+m}}{p^{n+1}} =2p^{4k+1} F_{m} + 2p^{4k} F_{m-1} + p^{3k+1} \big(2(-1)^mF_{k-m}-3F_{k+m}\big)\\
&\quad - p^{3k} \big(3F_{k+m-1}+2(-1)^m F_{k-m+1}\big)
+ p^{2k+1}\big(F_{2k+m} + 2(-1)^m F_{2k-m}+3(-1)^kL_{m}\big)\\ 
&\quad + p^{2k} \big(F_{2k+m-1}+3(-1)^kF_{m-1}-2(-1)^m F_{2k-m+1}\big)\\
&\quad + (-1)^k p^{k+1} \big(3(-1)^{m}F_{k-m} + F_{k+m}\big)- (-1)^k p^k \big(F_{k+m-1}-3(-1)^{m} F_{k-m+1}\big)\\
&\quad -  pF_{m} - F_{m-1},\\
D_k(p)\sum_{n=0}^{\infty} & \frac{L_{\lfloor n/k \rfloor}L_{n+m}}{p^{n+1}} =2p^{4k+1} L_{m} + 2p^{4k} L_{m-1} - p^{3k+1} (3L_{k+m}+2(-1)^mL_{k-m})\\
&\quad - p^{3k} (3L_{k+m-1}-2(-1)^m L_{k-m+1}) + p^{2k+1}\big(L_{2k+m} - 2(-1)^m L_{2k-m}+3(-1)^kL_{m}\big)\\
&\quad + p^{2k} \big(L_{2k+m-1}+3(-1)^kL_{m-1}+2(-1)^m L_{2k-m+1}\big)\\
&\quad + (-1)^k p^{k+1} \big(3(-1)^{m}L_{k-m} - L_{k+m}\big)- (-1)^k p^k \big(3(-1)^{m} L_{k-m+1}+L_{k+m-1}\big)\\
&\quad-  pL_{m} - L_{m-1},
\end{align*}
where $D_k(p)=(p^2-p-1)\big(p^{4k}-p^{3k}L_k-p^{2k}(L_{2k}-(-1)^k)+(-1)^k p^k L_k+1\big)$.
\end{theorem}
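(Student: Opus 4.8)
The plan is to follow the template used repeatedly above: substitute Binet's formula into the series and evaluate the two generating functions displayed just above the statement --- obtained from Lemma~\ref{fund_lem} --- at the points $z=\alpha/p$ and $z=\beta/p$. Writing $F_{n+m}=(\alpha^{n+m}-\beta^{n+m})/\sqrt5$ and $L_{n+m}=\alpha^{n+m}+\beta^{n+m}$, each of the four target series decomposes into a combination of $\sum_{n\ge0}F_{\lfloor n/k\rfloor}(x/p)^{n}$ and $\sum_{n\ge0}L_{\lfloor n/k\rfloor}(x/p)^{n}$ evaluated at $x=\alpha$ and $x=\beta$. Putting $z=x/p$ in the two generating functions and clearing powers of $p$ gives
\[
\sum_{n\ge0}F_{\lfloor n/k\rfloor}\frac{x^{n+m}}{p^{n+1}}=\frac{x^{m+k}(p^{k}-x^{k})}{(p-x)\bigl(p^{2k}-p^{k}x^{k}-x^{2k}\bigr)},\qquad
\sum_{n\ge0}L_{\lfloor n/k\rfloor}\frac{x^{n+m}}{p^{n+1}}=\frac{x^{m}(p^{k}-x^{k})(2p^{k}-x^{k})}{(p-x)\bigl(p^{2k}-p^{k}x^{k}-x^{2k}\bigr)},
\]
valid as soon as $|x/p|$ lies below the radius of convergence $\alpha^{-1/k}$ of the series in question (hence for $p$ large in terms of $k$; the full stated range then follows by analytic continuation in $p$, and no division is needed since the theorem is stated in the ``cleared'' form with $D_k(p)$ on the left).

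Next I would bring the $\alpha$- and $\beta$-pieces to a common denominator. The two ingredients are $(p-\alpha)(p-\beta)=p^{2}-p-1$ and, using $\alpha^{k}\beta^{k}=(-1)^{k}$, $\alpha^{k}+\beta^{k}=L_{k}$, $\alpha^{2k}+\beta^{2k}=L_{2k}$ and $L_{k}^{2}=L_{2k}+2(-1)^{k}$,
\[
\bigl(p^{2k}-p^{k}\alpha^{k}-\alpha^{2k}\bigr)\bigl(p^{2k}-p^{k}\beta^{k}-\beta^{2k}\bigr)=p^{4k}-p^{3k}L_{k}-p^{2k}\bigl(L_{2k}-(-1)^{k}\bigr)+(-1)^{k}p^{k}L_{k}+1,
\]
so the common denominator is exactly $D_k(p)$. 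Multiplying through by $D_k(p)$ reduces each of the four identities to the claim that a symmetric expression of the shape $T(\alpha,\beta)\mp T(\beta,\alpha)$ equals the asserted polynomial in $p$, where $T(\alpha,\beta)$ is the product of $\alpha^{m+k}(p^{k}-\alpha^{k})$ --- respectively $\alpha^{m}(p^{k}-\alpha^{k})(2p^{k}-\alpha^{k})$, for the $L_{\lfloor n/k\rfloor}$ sums --- with $(p-\beta)\bigl(p^{2k}-p^{k}\beta^{k}-\beta^{2k}\bigr)$; the sign is $-$ when $F_{n+m}$ is present and $+$ when $L_{n+m}$ is present.

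Finally I would expand that symmetric expression term by term. The product is a sum of monomials $c(p)\,\alpha^{a}\beta^{b}$ with $c(p)$ a monomial in $p$; pairing each with its $\alpha\leftrightarrow\beta$ conjugate and using $\alpha\beta=-1$ to collapse $\alpha^{a}\beta^{b}$ to $\pm\alpha^{\,a-b}$, one replaces each conjugate pair by $\sqrt5\,F_{a-b}$ or $L_{a-b}$ via Binet (the $1/\sqrt5$ cancelling in the Fibonacci cases). The contributions with negative index are rewritten through $F_{-n}=(-1)^{n-1}F_{n}$, $L_{-n}=(-1)^{n}L_{n}$, which is precisely what produces the $(-1)^{m}F_{k-m}$, $(-1)^{m}F_{k-m+1}$, $(-1)^{m}L_{k-m}$, \dots\ terms in the statement; collecting by power of $p$ then yields each closed form. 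For the two ``mixed'' sums ($L_{\lfloor n/k\rfloor}F_{n+m}$ and $F_{\lfloor n/k\rfloor}L_{n+m}$) the two halves come from different generating functions, but the overall expression is still invariant under $\alpha\leftrightarrow\beta$, so the same reduction applies. The only real obstacle is the sheer volume of bookkeeping in this last step --- organizing the expansion, keeping the signs straight, and merging the monomials in $p$ correctly (several powers of $p$ coalesce when $k$ is small) so that the final list of coefficients matches the stated formulas verbatim.
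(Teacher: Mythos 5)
Your proposal is correct and is exactly the computation the paper intends (the paper gives no written proof, only the two generating functions for $F_{\lfloor n/k\rfloor}$ and $L_{\lfloor n/k\rfloor}$ followed by ``this produces the next results''): substitute $z=\alpha/p$ and $z=\beta/p$, recognize $(p-\alpha)(p-\beta)(p^{2k}-p^k\alpha^k-\alpha^{2k})(p^{2k}-p^k\beta^k-\beta^{2k})=D_k(p)$, and collapse conjugate monomials via Binet. Your identification of the cleared denominators and of the numerators $x^{m+k}(p^k-x^k)$ and $x^m(p^k-x^k)(2p^k-x^k)$ is accurate, and you are in fact more careful than the paper about the actual range of convergence in $p$.
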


\end{document}